\documentclass[reqno,12pt]{amsart}
\usepackage[utf8]{inputenc}
\usepackage[left=1.2in, right=1.2in, top=1in]{geometry}

\usepackage{amsmath, amssymb,amsthm, mathrsfs,color,times,textcomp,verbatim}
\usepackage{xcolor}
\usepackage[colorlinks=true]{hyperref}
\hypersetup{urlcolor=blue, citecolor=red, linkcolor=blue}
\usepackage[square,numbers]{natbib}
\usepackage{pgfplots}
\usepackage{tikz}
\numberwithin{equation}{section}
\theoremstyle{plain}
\newtheorem{theorem}{Theorem}[section]
\newtheorem{proposition}[theorem]{Proposition}

\newtheorem{lemma}[theorem]{Lemma}

\newtheorem{corollary}[theorem]{Corollary}

\allowdisplaybreaks[4]
\usepackage{graphicx}

\theoremstyle{definition}

\usepackage{amssymb,amsthm,amsmath}
\title[Classification on 15 dimensional octonionic Heisenberg group]{Classification of finite energy positive solutions to Yamabe-type equation on the fifteen dimensional octonionic Heisenberg  group}

\author[]{Daowen Lin}
\address{ Geometric Partial Differential Equations Unit, Okinawa Institute of Science and
Technology Graduate University, Okinawa 904-0495, Japan}
\email{daowen.lin@oist.jp}
\begin{document}
\begin{abstract}
  We classify finite-energy positive solutions to the Yamabe-type equation on the 15-dimensional octonionic Heisenberg group, whose algebra is non-associative. Extremals for the Folland-Stein-Sobolev inequality on this group are explicitly described. This confirms the conjecture of Garofalo and Vassilev [Duke Math. J. 2001] on the $15$ dimensional octonionic Heisenberg group.
\end{abstract}
\maketitle

\begin{center}

\noindent{\it Key Words:} Octonionic Heisenberg group, nonassociative algebra, subelliptic semilinear equation, critical exponent, classification  theorem,  Yamabe problem, Folland-Stein-Sobolev inequality. 

\bigskip

\noindent{\bf AMS subject classification:} 35J61, 32V20, 35B33

\end{center}

\section{Introduction}

We consider  Yamabe type equation on the $15$ dimensional octonionic Heisenberg group $G$. 
Let $u(\xi)=u(x,t)$, where $(x,t)\in \mathbb{R}^{8}\times \mathbb{R}^{7}$, be a solution of the following equation 
\begin{equation}\label{1.1}
    \begin{aligned}
        &\Delta_G u+160u^\frac{Q+2}{Q-2}=0\\& u\in \mathscr{D}^{1,2}(G),\,\,u> 0   
    \end{aligned}
\end{equation}
where $Q=22$ is the homogeneous dimension of $G$. 

Equation (\ref{1.1}) has been studied due to its  connection with  Sobolev type inequality (\ref{1.3}) and   Yamabe type problem.

For the Heisenberg group case, which is related to the CR Yamabe problem: \textit{Given a compact, strict pseudo-convex CR manifold, find a choice of contact form for which the Webster-Tanaka pseudo-hermitian scalar curvature is constant}, Jerison and Lee \cite{JL1988} classified all finite energy positive solutions to  Yamabe type equation by introducing three family of differential identities. For further discussions,  see \cite{G2001a} \cite{G2001b} \cite{JL1987} \cite{JL1989} \cite{W2015}. Without finite energy condition, $n=1$ was addressed in \cite{CLMR2023}, with partial results for $n\geq 2$ in \cite{CLMR2023} and \cite{FV2023}, as well as in \cite{CMRW2024}. For subcritical case, see \cite{MO2023} \cite{MOW2023}. 

Ivanov, Minchev and Vassilev [\cite{IMV2010} $(n=1)$, \cite{IMV2023} $(n\geq 2)$] classified finite energy positive solutions to the Yamabe type equation on the quaternionic Heisenberg group  using differential identities. They reduced the problem to finding a positive-definite  matrix of size $7\times 7$. For subcritical case, see \cite{LZ}.

For general Heisenberg type group, Garofalo and Vassilev \cite{GV2001} made a conjecture regarding the classification of finite energy positive solutions to the  Yamabe type equation. In the same paper, the conjecture  was verified for all groups of Iwasawa type under the assumption of partial symmetry of the solution. In this paper, we confirm the conjecture of Garofalo and Vassilev on the $15$ dimensional octonionic Heisenberg group.

\begin{theorem}If $u(x,t)$ is a solution of (\ref{1.1}), then up to dilation and translation,     \begin{align}\label{1.2}
     u(x,t)=[(1+|x|^2)^2+|t|^2]^{-5}.
     \end{align}\end{theorem}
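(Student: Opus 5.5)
The plan is to follow the Jerison--Lee / Ivanov--Minchev--Vassilev strategy: pass to a model space where the solution should become a constant-curvature metric, and then prove that a suitable nonnegative quantity vanishes by means of a divergence identity.

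\textbf{Step 1: regularity and decay.} By Folland--Stein type regularity, any $u\in\mathscr{D}^{1,2}(G)$ solving (\ref{1.1}) is smooth and positive. The Kelvin-type (Cayley) transform of Garofalo--Vassilev then shows that $u$ has the decay $u\sim |\xi|^{-(Q-2)}$ at infinity, together with the corresponding decay of its horizontal derivatives. This is the input needed later to discard boundary terms in integrations by parts.

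\textbf{Step 2: substitution and tensors.} Write $u=\phi^{-(Q-2)/2}$ (or an equivalent power substitution) so that (\ref{1.1}) becomes a Yamabe equation for the conformal contact metric $\phi^{-1}\Theta$ on the octonionic contact structure. In horizontal frames $X_1,\dots,X_8$, with the seven almost complex structures $J_\alpha$ given by octonion multiplication, introduce:
\begin{itemize}
\item the trace-free part $D_{ab}$ of $\nabla^2\phi$ restricted to the part commuting with the Clifford action;
\item the components $E_{ab}$ anticommuting with the Clifford action;
\item a vector $F_\alpha$ built from $T_\alpha\phi$ and $X\phi$.
\end{itemize}
The target is to show $D=E=F=0$. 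Once this holds, $\phi$ is forced to be a quadratic polynomial in $(x,t)$, namely $\phi=c\big[(1+|x|^2)^2+|t|^2\big]$ up to the group action, which gives (\ref{1.2}) after dilation and translation.

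\textbf{Step 3: divergence identity.} I would look for a vector field $W$, quadratic in $(D,E,F)$ and weighted by powers of $\phi$, such that
\[
\operatorname{div}_G W=\phi^{-k}\big(|D|^2+|E|^2+|F|^2\big)+(\text{nonnegative terms}).
\]
Integrating over $G$ and using the decay from Step 1 kills the boundary term, and the identity then forces $D=E=F=0$. To find $W$, I would compute $\operatorname{div}$ of the natural candidates $\phi^{a}D_{ab}X_b\phi$, $\phi^{a}E_{ab}X_b\phi$ and $\phi^{a}F_\alpha T_\alpha\phi$ using the equation and commutator identities such as $[X_a,X_b]=\sum_\alpha \langle J_\alpha X_a,X_b\rangle T_\alpha$. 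One then chooses coefficients so that all the indefinite cross terms cancel.

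\textbf{Main obstacle.} The hard step is Step 3. In the quaternionic case the $J_\alpha$ satisfy associative relations, $J_1J_2=J_3$, and the cross terms close up neatly. Here the octonions are non-associative, so products such as $J_\alpha J_\beta$ are not again among the $J_\gamma$ but generate $\mathrm{Spin}(7)$-type endomorphisms. Commuting third derivatives therefore produces terms involving the triality/associator 3-form $\varphi_{\alpha\beta\gamma}$ and the 4-form.

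My first approach would be to decompose $\mathrm{Sym}^2(\mathbb{R}^8)$ under $\mathrm{Spin}(7)$, which gives $\mathbb{R}\oplus\mathbf{35}$. The Hessian part involving $T$ is handled via $\Lambda^2\mathbb{R}^8=\mathbf{7}\oplus\mathbf{21}$. Projecting all third-order commutator terms onto these irreducible pieces should make the cross terms either cancel by Schur's lemma or assemble into squares. The key algebraic identities needed are $\sum_\alpha J_\alpha^2=-7$ and the $\mathrm{Spin}(7)$ identities for $\sum_\alpha J_\alpha A J_\alpha$ on each irreducible component.

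If a direct identity fails, I would fall back on the Ivanov--Minchev--Vassilev reduction. That route shows $E=F=0$ first, and then reduces the remaining problem to showing that a certain positive-definite $7\times 7$ matrix (here paired with $\mathbf{7}$) is scalar.
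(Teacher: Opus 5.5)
\textbf{Gap: Step 3 is the whole proof, and it is missing.} Your architecture matches the paper's: decay, a power substitution, a weighted divergence identity with nonnegative right-hand side, integration, then rigidity. Step 1 is fine. But Step 3 carries the entire proof, and you only say what you would look for. Moreover, the tensors you set up in Step 2 cannot produce the identity.

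The split into parts commuting and anticommuting with the $I_\nu$, imported from the Heisenberg case, is empty here. The seven $I_\nu$ are orthogonal, square to $-1$ and pairwise anticommute. Hence every symmetric form $S$ on $\mathbb{R}^8$ satisfies $S+\sum_\nu S(I_\nu\cdot,I_\nu\cdot)=(\mathrm{tr}\,S)\,\mathrm{Id}$. So $S(I_\nu\cdot,I_\nu\cdot)=S$ for all $\nu$ forces $S$ to be scalar, and $S(I_\nu\cdot,I_\nu\cdot)=-S$ for all $\nu$ forces $S=0$. Both your $D$ and your $E$, as you describe them, therefore vanish identically, in line with your own remark that $\mathbf{35}$ is irreducible.

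The paper works with $u=h^{-(Q-2)/4}=h^{-5}$. Its second-order tensor $\mathcal{D}$ is, up to the factor $h^{-1}$, the whole traceless part of the symmetrized horizontal Hessian of $h$. It satisfies only the averaged condition $\mathcal{D}+\sum_\nu\mathcal{D}(I_\nu\cdot,I_\nu\cdot)=0$. The decisive ingredient you are missing is, for each vertical direction $\nu$, the horizontal one-form
\[
B_\nu(\mathscr{X}^i)=2h^{-1}I_\nu\mathscr{X}^ih_{t^\nu}-\sum_{\mu=1}^7h^{-2}h_{t^\mu}I_\mu\mathscr{X}^ih+\tfrac12h^{-2}f\mathscr{X}^ih,\qquad f=4+\frac{|\nabla_Gh|^2}{4h}.
\]
Its leading term is the mixed derivative $I_\nu\mathscr{X}^ih_{t^\nu}$, giving 56 components in all. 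It is not a seven-component vector built from $T_\alpha\phi$ and $X\phi$. The identity is then $\frac13O_1+\frac43O_2+4O_3$, for three explicit weighted divergences: of $h^{-11}f\,\mathcal{D}(\cdot,\nabla_Gh)$, of $h^{-11}h_{t^\nu}\mathcal{D}(I_\nu\cdot,I_\nu\nabla_Gh)$, and of $h^{-10}h_{t^\nu}B_\nu$.

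\textbf{The cross terms do not cancel.} You expect the indefinite cross terms to cancel by Schur's lemma or assemble into squares; that is not what happens. Set $D(\mathscr{X}^j)=h^{-1}\sum_i\mathcal{D}(\mathscr{X}^j,\mathscr{X}^i)\mathscr{X}^ih$ and $F_\nu(\mathscr{X}^j)=h^{-1}\sum_i\mathcal{D}(I_\nu\mathscr{X}^j,I_\nu\mathscr{X}^i)\mathscr{X}^ih$. The right-hand side of the identity is
\[
\tfrac13fh\mathcal{D}^2+\tfrac16h^2\sum_iD(\mathscr{X}^i)^2+h^2\sum_{i,\nu}\Big[\tfrac23F_\nu(\mathscr{X}^i)B_\nu(\mathscr{X}^i)+2B_\nu(\mathscr{X}^i)^2\Big].
\]
This contains the genuinely indefinite term $F_\nu B_\nu$. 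Coercivity needs the separate algebraic bound $h^2\sum_{i,\nu}F_\nu(\mathscr{X}^i)^2\le h^2\sum_iD(\mathscr{X}^i)^2+|\nabla_Gh|^2\mathcal{D}^2$, which the paper gets by expanding $\sum|\mathbb{D}|^2\ge0$. It is combined with $fh\ge\frac14|\nabla_Gh|^2$.

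Non-associativity is handled there and in $O_3$ through explicit relations, not via a $\mathrm{Spin}(7)$ decomposition. These are $I_\nu I_\mu I_\nu=I_\mu$ for $\nu\neq\mu$, $I_1I_2\mathscr{X}^i=-I_2I_1\mathscr{X}^i=\pm I_3\mathscr{X}^i$, and the orthogonality relations of Lemma 2.3. You have not shown that the $\mathrm{Spin}(7)$ route yields any coercive combination, and the $7\times7$-matrix fallback does not address this.

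\textbf{Two smaller points.}
\begin{itemize}
\item With $u=\phi^{-(Q-2)/2}$, your $\phi$ would be $h^{1/2}$, which is not a polynomial. Also, the target $h=(1+|x|^2)^2+|t|^2$ is quartic in $x$, not quadratic in $(x,t)$.
\item The claim that $D=E=F=0$ forces $\phi$ needs the argument of Section 4.3. There, $\mathcal{D}=0$ gives $\mathscr{X}^i\mathscr{X}^jh=\frac12[\mathscr{X}^i,\mathscr{X}^j]h$, hence $h_{t^\nu t^\mu}=2\kappa\delta_{\nu\mu}$ and $h=g(x)+\kappa\sum_\nu(t^\nu+s^\nu(x))^2$. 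The $s^\nu$ are affine and removed by a translation. Then $g$ is a quartic whose linear part is killed using $B_1=0$, so the third tensor is needed here as well.
\end{itemize}
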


 \begin{corollary}
    Every  extremal of the following Sobolev inequality on $15$ dimensional octonionic Heisenberg group must be  (\ref{1.2}), up to dilation and translation. 
 \end{corollary}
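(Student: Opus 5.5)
We will deduce the Corollary from the Theorem by showing that every extremal, after changing its sign if needed and multiplying it by a positive constant, is a solution of (\ref{1.1}). Here (\ref{1.3}) is the Folland--Stein--Sobolev inequality $\|u\|_{L^{2^*}(G)}\le S_G\|\nabla_G u\|_{L^2(G)}$ on $\mathscr{D}^{1,2}(G)$, with $2^*=\frac{2Q}{Q-2}=\frac{11}{5}$. We use the sign convention $\Delta_G=\sum_j X_j^2$, so that $\int_G|\nabla_G u|^2=-\int_G u\,\Delta_G u$.

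\emph{Step 1: reduction to a nonnegative extremal.} Let $u\in\mathscr{D}^{1,2}(G)$, $u\not\equiv0$, be an extremal. Since $|u|\in\mathscr{D}^{1,2}(G)$ and $|\nabla_G|u||=|\nabla_G u|$ a.e., the function $|u|$ is also an extremal, and it is nonnegative.

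\emph{Step 2: Euler--Lagrange equation.} An extremal $w$ is a critical point of the quotient $\|\nabla_G w\|_2^2/\|w\|_{2^*}^2$. Differentiating gives
\[
-\Delta_G w=\lambda |w|^{2^*-2}w,\qquad \lambda=\frac{\|\nabla_G w\|_2^2}{\|w\|_{2^*}^{2^*}}>0,
\]
in the weak sense. Set $v=cw$ with $c^{2^*-2}=\lambda/160$. Since $2^*-1=\frac{Q+2}{Q-2}$, the function $v$ satisfies $\Delta_G v+160|v|^{\frac{4}{Q-2}}v=0$. For $w=|u|$ this is exactly the equation in (\ref{1.1}), except that we only know $v\ge0$ so far.

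\emph{Step 3: strict positivity.} This is the main point. By the Moser iteration for subelliptic equations with critical nonlinearity (as in Garofalo--Vassilev \cite{GV2001}), $v\in L^\infty_{loc}$. Then $V:=160v^{\frac{4}{Q-2}}\in L^\infty_{loc}$, and $v$ is a nonnegative weak solution of the linear equation $\Delta_G v+Vv=0$. Hypoelliptic regularity (Folland--Stein) makes $v$ continuous. The Harnack inequality for sub-Laplacians with bounded potential, or Bony's strong maximum principle, shows that a nonnegative solution vanishing at one point vanishes identically. Since $v\not\equiv0$, we get $v>0$, so $v$ solves (\ref{1.1}).

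\emph{Step 4: conclusion.} Now $|u|>0$ everywhere. The same regularity argument applied to the original $u$ (which solves $-\Delta_G u=\lambda|u|^{2^*-2}u$) shows that $u$ is continuous. A continuous function with no zeros has constant sign, so $u=\pm|u|$. The Theorem applied to $v=c|u|$ shows that $v$ is (\ref{1.2}) up to dilation and translation. Hence every extremal is (\ref{1.2}) up to dilation, translation and multiplication by a nonzero constant; the inequality is invariant under these operations.
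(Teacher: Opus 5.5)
Your proposal is correct and follows the same route the paper takes for granted: the paper states the corollary without proof, and the implicit argument is that an extremal, after replacing it by $|u|$ and rescaling by a constant, becomes a positive $\mathscr{D}^{1,2}$ solution of (\ref{1.1}), to which the Theorem applies. Your Steps 3 and 4 (local boundedness, the Harnack/strong maximum principle, and continuity of $u$ forcing a constant sign) and your remark that extremals are determined only up to a nonzero constant multiple supply details the paper leaves implicit.
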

 
[Folland and Stein \cite{FS1974}] For $v\in \mathscr{D}^{1,2}(G)$, there is a universal constant $S$ such that

    \begin{align}\label{1.3}
         \bigg(\int_{G}v^\frac{2Q}{Q-2}dV\bigg)^\frac{Q-2}{Q}\leq S\int_{G}|\nabla_G v|^2dV
     \end{align}

For sharp inequalities on Heisenberg type group, one can get the sharp Folland-Stein-Sobolev inequality from Jerison and Lee's result \cite{JL1988} on Heisenberg group and from Ivanov, Minchev and Vassilev's results [\cite{IMV2010}, \cite{IMV2023}] on quaternionic Heisenberg group. Frank and Lieb \cite{FL2012} established sharp Hardy-Littlewood-Sobolev inequalities on the Heisenberg group; see also Hang and Wang \cite{HW2022}. For Hardy-Littlewood-Sobolev inequality on the other Heisenberg type groups, see [\cite{CLZ},\cite{CLZ2}]. Later, Yang \cite{Y2024} gave the optimal constant in Folland-Stein-Sobolev inequality on the Heisenberg type group inspired by the work of Frank and Lieb \cite{FL2012} and Hang and Wang \cite{HW2022}, but he did not classify the minimizers.

 The proof of our result relies on a Jerison-Lee type identity along with a  prior estimates for the solution and its derivatives. The key insight of our approach is the definition (\ref{3.14}) of $B_\nu$, which allows us to analyze Heisenberg type group with several dimensions in the second layer. The main challenge arises from the complexity of the octonion, which is not associative. 
 
\textbf{Convention}: \textit{i,j,.. stand for 0,1,..,7; $\nu,\mu,...$ stand for 1,2,...,7.}

 The paper is organized as follows: In Section \ref{s2}, we collect some notations on the octonionic Heisenberg group. In Section \ref{s3}, we define $\mathcal{D}$ and $B_\nu$, and then we compute a Jerison-Lee type identity. In Section \ref{s4}, we use the Jerison-Lee type identity and a prior estimates to show $\mathcal{D}=B_\nu= 0$ and then we follow the method in \cite{IMV2014} to complete our proof.

\section{Notations}\label{s2}
In this section, we give an introduction to the octonionic Heisenberg group. We consider
\begin{align*}
    G=\mathbb{R}^{8}\times \mathbb{R}^7
\end{align*}
with $\xi=(x,t)=(x^i, t^\nu)$ where $ i=0,1,..,7; \nu=1,2,...,7$ and with the group law: given $\xi=(x,t),\zeta=(y,s)$,
\begin{align*}
    (x,t)\circ(y,s)=(x+y,T),
\end{align*}
where 
\small{
\begin{align*}
    T^1&=t^1+s^1+2(-x^0 y^1+x^1 y^0-x^2 y^3+x^3 y^2-x^4y^5+x^5 y^4+x^6 y^7-x^7 y^6),
\end{align*}
\begin{align*}
    T^2&=t^2+s^2+2(-x^0 y^2+x^1 y^3+x^2 y^0-x^3y^1-x^4y^6-x^5 y^7+x^6 y^4+x^7 y^5),
\end{align*}
\begin{align*}
    T^3&=t^3+s^3+2(-x^0 y^3-x^1 y^2+x^2 y^1+x^3 y^0-x^4y^7+x^5 y^6-x^6 y^5+x^7 y^4),
\end{align*}
\begin{align*}
    T^4&=t^4+s^4+2(-x^0 y^4+x^1 y^5+x^2 y^6+x^3 y^7+x^4 y^0-x^5 y^1-x^6 y^2-x^7 y^3),
\end{align*}
\begin{align*}
    T^5&=t^5+s^5+2(-x^0 y^5-x^1 y^4+x^2 y^7-x^3y^6+x^4y^1+x^5 y^0+x^6 y^3-x^7 y^2),
\end{align*}
\begin{align*}
    T^6&=t^6+s^6+2(-x^0 y^6-x^1 y^7-x^2 y^4+x^3 y^5+x^4y^2-x^5 y^3+x^6 y^0+x^7 y^1),
\end{align*}
\begin{align*}
    T^7&=t^7+s^7+2(-x^0 y^7+x^1 y^6-x^2 y^5-x^3y^4+x^4y^3+x^5 y^2-x^6 y^1+x^7 y^0).
\end{align*}}

We define the norm for $\xi=(x,t)$ as follow:
\begin{align*}
    |\xi|=(|x|^4+|t|^2)^\frac{1}{4},
\end{align*}
with the associated distance function 
\begin{align*}
    d(\xi,\zeta)=|\zeta^{-1}\circ\xi|,\,\,\textrm{for}\,\,\xi,\zeta \in G,
\end{align*}
where $\zeta^{-1}$ is the inverse of $\zeta$ with respect to $\circ$, i.e. $\zeta^{-1}=-\zeta$.
We define
\begin{align*}
    B_R(\xi)=\{\zeta\in G:d(\xi, \zeta)<R\},
\end{align*}
and we denote $B_R=B_R(0)$. We have
\begin{align*}
    |B_R(\xi)|=CR^Q,
\end{align*}
where $C>0$ is a positive constant, $Q=22$ is the homogeneous dimension of $G$ and $|\cdot|$ is the Lebesgue measure.

We define the following $8$ left-invariant vector fields in $G$:
\small{\begin{align*}
&\mathscr{X}^0=\partial_{x^0}+2x^1 \partial_{t^1}+2x^2 \partial_{t^2}+2x^3 \partial_{t^3}+2x^4 \partial_{t^4}+2x^5 \partial_{t^5}+2x^6 \partial_{t^6}+2x^7 \partial_{t^7},\\
&\mathscr{X}^1 =\partial_{x^1}-2x^0 \partial_{t^1}-2x^3 \partial_{t^2}+2x^2 \partial_{t^3}-2x^5 \partial_{t^4}+2x^4 \partial_{t^5}+2x^7 \partial_{t^6}-2x^6  \partial_{t^7},\\
&\mathscr{X}^2=\partial_{x^2}+2x^3 \partial_{t^1}-2x^0 \partial_{t^2}-2x^1 \partial_{t^3}-2x^6\partial_{t^4}-2x^7\partial_{t^5}+2x^4\partial_{t^6}+2x^5 \partial_{t^7},\\
&\mathscr{X}^3=\partial_{x^3}-2x^2\partial_{t^1}+2x^1 \partial_{t^2}-2x^0 \partial_{t^3}-2x^7 \partial_{t^4}+2x^6 \partial_{t^5}-2x^5 \partial_{t^6}+2x^4 \partial_{t^7},\\
&\mathscr{X}^4=\partial_{x^4}+2x^5 \partial_{t^1}+2x^6 \partial_{t^2}+2x^7 \partial_{t^3}-2x^0\partial_{t^4}-2x^1 \partial_{t^5}-2x^2 \partial_{t^6}-2x^3\partial_{t^7},\\
&\mathscr{X}^5=\partial_{x^5}-2x^4 \partial_{t^1}+2x^7 \partial_{t^2}-2x^6 \partial_{t^3}+2x^1 \partial_{t^4}-2x^0\partial_{t^5}+2x^3\partial_{t^6}-2x^2 \partial_{t^7},\\
&\mathscr{X}^6=\partial_{x^6}-2x^7 \partial_{t^1}-2x^4\partial_{t^2}+2x^5\partial_{t^3}+2x^2 \partial_{t^4}-2x^3 \partial_{t^5}-2x^0\partial_{t^6}+2x^1 \partial_{t^7},\\
&\mathscr{X}^7=\partial_{x^7}+2x^6 \partial_{t^1}-2x^5 \partial_{t^2}-2x^4 \partial_{t^3}+2x^3 \partial_{t^4}+2x^2 \partial_{t^5}-2x^1 \partial_{t^6}-2x^0 \partial_{t^7}.   
 \end{align*}}
In fact, the above vector fields are the kernel of the following $7$ contact forms,
\small{\begin{align*}
    \eta_1=&dt^1+2(-x^1 dx^0+x^0 dx^1-x^3 dx^2+x^2 dx^3-x^5dx^4+x^4 dx^5+x^7 dx^6-x^6 dx^7),\\
    \eta_2=&dt^2+2(-x^2 dx^0+x^3 dx^1+x^0 dx^2-x^1dx^3-x^6 dx^4-x^7 dx^5+x^4 dx^6+x^5 dx^7),\\
    \eta_3=&dt^3+2(-x^3dx^0-x^2 dx^1+x^1dx^2+x^0 dx^3-x^7 dx^4+x^6dx^5-x^5 dx^6+x^4 dx^7),\\
    \eta_4=&dt^4+2(-x^4 dx^0+x^5dx^1+x^6 dx^2+x^7 dx^3+x^0 dx^4-x^1dx^5-x^2dx^6-x^3 dx^7),\\
    \eta_5=&dt^5+2(-x^5 dx^0-x^4dx^1+x^7 dx^2-x^6 dx^3+x^1 dx^4+x^0 dx^5+x^3 dx^6-x^2 dx^7),\\
    \eta_6=&dt^6+2(-x^6 dx^0-x^7 dx^1-x^4dx^2+x^5 dx^3+x^2dx^4-x^3 dx^5+x^0 dx^6+x^1 dx^7),\\
    \eta_7=&dt^7+2(-x^7 dx^0+x^6 dx^1-x^5 dx^2-x^4 dx^3+x^3 dx^4+x^2 dx^5-x^1 dx^6+x^0 dx^7).
    \end{align*}}
The following forms $\omega_\nu$ are associated to $d\eta_\nu$,
\small{\begin{align*}
    \omega_1=& dx^0\wedge dx^1+dx^2\wedge dx^3+dx^4\wedge dx^5+dx^7\wedge dx^6,\\
    \omega_2=&dx^0\wedge dx^2+dx^3\wedge dx^1+dx^4\wedge dx^6+dx^5\wedge dx^7,\\
    \omega_3=&dx^0\wedge dx^3+dx^1\wedge dx^2+dx^4\wedge dx^7+dx^6\wedge dx^5,\\
    \omega_4=&dx^0\wedge dx^4+dx^5\wedge dx^1+dx^6\wedge dx^2+dx^7\wedge dx^3,\\
    \omega_5=&dx^0\wedge dx^5+dx^1\wedge dx^4+dx^7\wedge dx^2+dx^3\wedge dx^6,
    \\\omega_6=&dx^0\wedge dx^6+dx^1\wedge dx^7+dx^2\wedge dx^4+dx^5\wedge dx^3,
    \\  
    \omega_7=&dx^0\wedge dx^7+dx^6\wedge dx^1+dx^2\wedge dx^5+dx^3\wedge dx^4.   \end{align*}}

We define some operators related to the octonionic multiplication for convenience:

\begin{equation*}
   \left(
    \begin{array}{ccccccccc}
&\mathscr{X}^0&\mathscr{X}^1&\mathscr{X}^2& \mathscr{X}^3& \mathscr{X}^4&\mathscr{X}^5& \mathscr{X}^6& \mathscr{X}^7\\ 
I_1&\mathscr{X}^1&-\mathscr{X}^0&\mathscr{X}^3& -\mathscr{X}^2& \mathscr{X}^5&-\mathscr{X}^4& -\mathscr{X}^7& \mathscr{X}^6\\ 
I_2&\mathscr{X}^2&-\mathscr{X}^3&-\mathscr{X}^0& \mathscr{X}^1& \mathscr{X}^6&\mathscr{X}^7& -\mathscr{X}^4& -\mathscr{X}^5\\ 
I_3&\mathscr{X}^3&\mathscr{X}^2&-\mathscr{X}^1& -\mathscr{X}^0& \mathscr{X}^7&-\mathscr{X}^6& \mathscr{X}^5& -\mathscr{X}^4\\ 
I_4&\mathscr{X}^4&-\mathscr{X}^5&-\mathscr{X}^6& -\mathscr{X}^7& -\mathscr{X}^0&\mathscr{X}^1& \mathscr{X}^2& \mathscr{X}^3\\
I_5&\mathscr{X}^5&\mathscr{X}^4&-\mathscr{X}^7& \mathscr{X}^6& -\mathscr{X}^1&-\mathscr{X}^0& -\mathscr{X}^3& \mathscr{X}^2\\ 
I_6&\mathscr{X}^6&\mathscr{X}^7&\mathscr{X}^4& -\mathscr{X}^5& -\mathscr{X}^2&\mathscr{X}^3& -\mathscr{X}^0& -\mathscr{X}^1\\ 
I_7&\mathscr{X}^7&-\mathscr{X}^6&\mathscr{X}^5& \mathscr{X}^4& -\mathscr{X}^3&-\mathscr{X}^2& \mathscr{X}^1& -\mathscr{X}^0\\ 
\end{array}
    \right)
    \end{equation*}
For example, $I_5\mathscr{X}^2=I_5I_2\mathscr{X}^0=-I_7\mathscr{X}^0=-\mathscr{X}^7$. 

We list the Lie brackets of the horizontal vector fields.
\begin{lemma}\label{lemma2.1}
We have
  \begin{equation*}
      \begin{aligned}
\big[\mathscr{X}^0,\,\mathscr{X}^1\big]=\big[\mathscr{X}^2,\,\mathscr{X}^3\big]=\big[\mathscr{X}^4,\,\mathscr{X}^5\big]=\big[\mathscr{X}^7,\,\mathscr{X}^6\big]=-4\partial_{t^1},
      \end{aligned}
  \end{equation*}
  \begin{equation*}
      \begin{aligned}
\big[\mathscr{X}^0,\,\mathscr{X}^2\big]=\big[\mathscr{X}^3,\,\mathscr{X}^1\big]=\big[\mathscr{X}^4,\,\mathscr{X}^6\big]=\big[\mathscr{X}^5,\,\mathscr{X}^7\big]=-4\partial_{t^2},
      \end{aligned}
  \end{equation*}
  \begin{equation*}
      \begin{aligned}
\big[\mathscr{X}^0,\,\mathscr{X}^3\big]=\big[\mathscr{X}^1,\,\mathscr{X}^2\big]=\big[\mathscr{X}^4,\,\mathscr{X}^7\big]=\big[\mathscr{X}^6,\,\mathscr{X}^5\big]=-4\partial_{t^3},
      \end{aligned}
  \end{equation*}
  \begin{equation*}
      \begin{aligned}
\big[\mathscr{X}^0,\,\mathscr{X}^4\big]=\big[\mathscr{X}^5,\,\mathscr{X}^1\big]=\big[\mathscr{X}^6,\,\mathscr{X}^2\big]=\big[\mathscr{X}^7,\,\mathscr{X}^3\big]=-4\partial_{t^4},
      \end{aligned}
  \end{equation*}
  \begin{equation*}
      \begin{aligned}
\big[\mathscr{X}^0,\,\mathscr{X}^5\big]=\big[\mathscr{X}^1,\,\mathscr{X}^4\big]=\big[\mathscr{X}^7,\,\mathscr{X}^2\big]=\big[\mathscr{X}^3,\,\mathscr{X}^6\big]=-4\partial_{t^5},
      \end{aligned}
  \end{equation*}
  \begin{equation*}
      \begin{aligned}
\big[\mathscr{X}^0,\,\mathscr{X}^6\big]=\big[\mathscr{X}^1,\,\mathscr{X}^7\big]=\big[\mathscr{X}^2,\,\mathscr{X}^4\big]=\big[\mathscr{X}^5,\,\mathscr{X}^3\big]=-4\partial_{t^6},
      \end{aligned}
  \end{equation*}
  \begin{equation*}
\begin{aligned}
\big[\mathscr{X}^0,\,\mathscr{X}^7\big]=\big[\mathscr{X}^6,\,\mathscr{X}^1\big]=\big[\mathscr{X}^2,\,\mathscr{X}^5\big]=\big[\mathscr{X}^3,\,\mathscr{X}^4\big]=-4\partial_{t^7}.
      \end{aligned}
  \end{equation*}
\end{lemma}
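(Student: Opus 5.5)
The plan is a direct computation from the explicit vector fields. Write each horizontal field as
\[
\mathscr{X}^i=\partial_{x^i}+2\sum_{\nu=1}^7 a^{\nu}_{ij}x^j\partial_{t^\nu},
\]
where the constant coefficients $a^\nu_{ij}$ are read off from the definitions of $\mathscr{X}^0,\dots,\mathscr{X}^7$. The fields $\partial_{t^\nu}$ commute with everything in sight, and the coefficients of $\partial_{t^\nu}$ depend only on $x$. Hence
\[
[\mathscr{X}^i,\mathscr{X}^j]=\mathscr{X}^i\big(2a^\nu_{jk}x^k\big)\partial_{t^\nu}-\mathscr{X}^j\big(2a^\nu_{ik}x^k\big)\partial_{t^\nu}=2\big(a^\nu_{ji}-a^\nu_{ij}\big)\partial_{t^\nu}.
\]
So each bracket is a constant combination of the $\partial_{t^\nu}$. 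Its $\partial_{t^\nu}$-component equals $2\times$ (coefficient of $x^i$ in the $\partial_{t^\nu}$-part of $\mathscr{X}^j$) minus $2\times$ (coefficient of $x^j$ in the $\partial_{t^\nu}$-part of $\mathscr{X}^i$).

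Next I check that the matrices $a^\nu=(a^\nu_{ij})$ are antisymmetric. Comparing the $\mathscr{X}^i$ with the contact forms $\eta_\nu$, this is exactly the statement that $\eta_\nu=dt^\nu-2\sum a^\nu_{ij}x^j dx^i$ annihilates every $\mathscr{X}^i$. It reduces the bracket to
\[
[\mathscr{X}^i,\mathscr{X}^j]=-4a^\nu_{ij}\partial_{t^\nu}.
\]
Equivalently, $d\eta_\nu=4\omega_\nu$ up to the sign convention, and the usual formula $d\eta(X,Y)=-\eta([X,Y])$ for $\eta(X)=\eta(Y)=0$ gives the bracket directly from $\omega_\nu$. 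Each $\omega_\nu$ lists exactly four ordered pairs $(i,j)$ with coefficient $+1$. These are precisely the four pairs in the $\nu$-th line of the lemma, e.g. $\omega_1$ gives $(0,1),(2,3),(4,5),(7,6)$. This matches the claimed $-4\partial_{t^1}$. It also explains why no other $\partial_{t^\mu}$ appears: for a given pair $\{i,j\}$ exactly one $\omega_\nu$ contains it, since the 28 pairs are partitioned into seven groups of four.

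The only real obstacle is bookkeeping of signs. I would do one representative carefully, e.g.
\[
[\mathscr{X}^0,\mathscr{X}^1]=\mathscr{X}^0(-2x^0)\partial_{t^1}-\mathscr{X}^1(2x^1)\partial_{t^1}=-4\partial_{t^1},
\]
and one from the second block of indices, e.g. $[\mathscr{X}^7,\mathscr{X}^6]$. The latter uses the coefficient $+2x^6$ of $\partial_{t^1}$ in $\mathscr{X}^7$ and $-2x^7$ in $\mathscr{X}^6$, giving $-2-2=-4$. The remaining 26 brackets then follow by reading coefficients off the table in the same way, each a two-term check.
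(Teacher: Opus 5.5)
Your proposal is correct and is essentially the paper's approach: the paper states Lemma~\ref{lemma2.1} without proof as a routine coordinate computation. Your reduction $[\mathscr{X}^i,\mathscr{X}^j]=-4\sum_\nu a^\nu_{ij}\partial_{t^\nu}$ checks out against the explicit fields, and so do the antisymmetry of each $a^\nu$ and the fact that each pair $\{i,j\}$ lies in exactly one $\omega_\nu$ (with $d\eta_\nu=4\omega_\nu$ exactly); the one cosmetic slip is that $\eta_\nu=dt^\nu-2\sum a^\nu_{ij}x^j\,dx^i$ annihilates the $\mathscr{X}^i$ for any $a^\nu$, so antisymmetry really comes from the paired $x^i\,dx^j-x^j\,dx^i$ form of the paper's explicit $\eta_\nu$, or from direct inspection of the table, which is what you actually do.
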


We should be careful since octonionic multiplication is not associative. In general, we don't have $I_\nu I_\mu\mathscr{X}^i=(I_\nu I_\mu)\mathscr{X}^i$. However, we still have the following relations.
\begin{lemma}\label{lemma2.2}
For $i=0,1,...,7$ and $\nu,\mu=1,2,..,7$, we have
    \begin{equation*}
        \begin{aligned}
            I_\nu I_\mu I_\nu \mathscr{X}^i=I_\mu \mathscr{X}^i,\,\,\textrm{for}\,\nu\neq \mu;I_\nu I_\mu I_\nu \mathscr{X}^i=-I_\mu \mathscr{X}^i,\,\,\textrm{for}\,\nu= \mu.        \end{aligned}
    \end{equation*}
\end{lemma}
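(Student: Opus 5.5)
The plan is to turn the statement into an identity between $8\times 8$ matrices and then reduce it to two algebraic facts: each $I_\nu$ squares to minus the identity, and distinct $I_\nu$ anticommute.

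\textbf{Matrix form.} First I make precise how the symbols act. By the table, each $I_\nu$ sends a horizontal field $\mathscr{X}^i$ to $\pm\mathscr{X}^{k}$ for a single index $k$. So $I_\nu$ extends linearly to the span of $\mathscr{X}^0,\dots,\mathscr{X}^7$, with the rule $I_\nu(-\mathscr{X}^k)=-I_\nu\mathscr{X}^k$. This is exactly how the example $I_5\mathscr{X}^2=I_5I_2\mathscr{X}^0=-I_7\mathscr{X}^0$ is read. Hence $I_\nu I_\mu I_\nu\mathscr{X}^i$ means $I_\nu\big(I_\mu(I_\nu\mathscr{X}^i)\big)$. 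Let $J_\nu$ be the $8\times 8$ signed permutation matrix with $I_\nu\mathscr{X}^i=\sum_k (J_\nu)_{ki}\mathscr{X}^k$. The lemma then says $J_\nu J_\mu J_\nu=J_\mu$ for $\nu\neq\mu$ and $J_\nu^3=-J_\nu$.

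\textbf{Two algebraic facts.} The lemma follows once I show:
\begin{itemize}
\item[(a)] $J_\nu^2=-\mathrm{Id}$;
\item[(b)] $J_\nu J_\mu+J_\mu J_\nu=0$ for $\nu\neq\mu$.
\end{itemize}
Granting these, $J_\nu^3=-J_\nu$ is immediate. For $\nu\ne\mu$, we get $J_\nu J_\mu J_\nu=-J_\mu J_\nu J_\nu=-J_\mu(-\mathrm{Id})=J_\mu$.

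\textbf{Proof of (a).} Reading the table, whenever $I_\nu\mathscr{X}^i=\epsilon\mathscr{X}^k$ with $\epsilon=\pm1$, we also have $I_\nu\mathscr{X}^k=-\epsilon\mathscr{X}^i$. For example, $I_1\mathscr{X}^0=\mathscr{X}^1$ and $I_1\mathscr{X}^1=-\mathscr{X}^0$. So $J_\nu$ is an antisymmetric signed permutation matrix. Such a matrix is orthogonal, so $J_\nu^2=-J_\nu J_\nu^{T}=-\mathrm{Id}$. This is a check on each row of the table.

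\textbf{Proof of (b), conceptual route.} Identify $\mathscr{X}^i$ with the octonion unit $e_i$, where $e_0=1$. From the table, $I_\nu$ then acts as multiplication by $e_\nu$ on one fixed side, possibly up to conjugation conventions. I would first confirm which side by checking a few entries. Say it is $L_{e_\nu}$. The left alternative law $a(ab)=(aa)b$, polarized, gives
\[
a(bx)+b(ax)=(ab+ba)x .
\]
For orthogonal imaginary units, $e_\nu e_\mu+e_\mu e_\nu=0$, so $L_{e_\nu}L_{e_\mu}+L_{e_\mu}L_{e_\nu}=0$. The right-multiplication case is symmetric, using right alternativity. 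The point is that non-associativity is harmless here: alternativity alone gives the Clifford relations $J_\nu J_\mu+J_\mu J_\nu=-2\delta_{\nu\mu}\mathrm{Id}$, which contain both (a) and (b).

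\textbf{Main obstacle and fallback.} The hardest step is matching the sign conventions of the table to a specific octonion multiplication table. If that identification gets messy, I would verify (b) directly instead. Each product $J_\nu J_\mu$ is again a signed permutation, so anticommutation only needs checking on the 8 basis vectors, for each of the 21 pairs $\nu<\mu$. A structural shortcut makes this light: if $I_\mu\mathscr{X}^0=\mathscr{X}^\mu$ and the maps satisfy the Clifford relations, the pattern is forced. So I would check, for each $i$ and each pair $\nu<\mu$, that $I_\nu I_\mu\mathscr{X}^i=-I_\mu I_\nu\mathscr{X}^i$ using the rows of the table. Each check is a single lookup in the table.
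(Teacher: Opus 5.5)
Your proof is correct. The paper gives no argument for this lemma: it is stated as something to be read off the table of the $I_\nu$, so the implicit proof is a direct check of $I_\nu(I_\mu(I_\nu\mathscr{X}^i))$ in all $7\cdot 7\cdot 8$ cases. Your reduction to (a) $J_\nu^2=-\mathrm{Id}$ and (b) $J_\nu J_\mu=-J_\mu J_\nu$ is a genuinely better route.

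It loses nothing: given (a), the identity $J_\nu J_\mu J_\nu=J_\mu$ is actually \emph{equivalent} to (b). It also explains why the paper's warning about non-associativity is harmless here. Composition of the $I_\nu$ is not octonion multiplication of the indices (for instance $I_1I_2\mathscr{X}^4=-\mathscr{X}^7$ while $I_3\mathscr{X}^4=\mathscr{X}^7$), yet alternativity alone forces the Clifford relations.

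Your route further shows the lemma is not specific to octonions. By Lemma~\ref{lemma2.1}, $I_\nu\mathscr{X}^i=\epsilon\mathscr{X}^k$ ($\epsilon=\pm1$) exactly when $[\mathscr{X}^i,\epsilon\mathscr{X}^k]=-4\partial_{t^\nu}$. So $I_\nu$ is, up to a constant, Kaplan's map $J_{\partial_{t^\nu}}$. Then (a) and (b) together are the (normalized) H-type condition $J_Z^2=-|Z|^2\,\mathrm{Id}$, and the lemma holds on every H-type group.

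One step needs tightening. In your conceptual route, checking ``a few entries'' does not identify the table with an octonion multiplication. For a fixed set of Fano lines, only $16$ of the $2^7$ orientations give an alternative algebra; reversing a single line already breaks the polarized identity $a(bx)+b(ax)=(ab+ba)x$ that you use. So the whole table must be matched.

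It does match. $I_\nu\mathscr{X}^i$ corresponds exactly to $e_\nu e_i$ (left multiplication, no conjugation) in the Cayley--Dickson octonions $\mathbb{H}\oplus\mathbb{H}e_4$ with $e_5=e_1e_4$, $e_6=e_2e_4$, $e_7=e_3e_4$. Equivalently, the oriented lines are $(1,2,3)$, $(1,4,5)$, $(1,7,6)$, $(2,4,6)$, $(2,5,7)$, $(3,4,7)$, $(3,6,5)$. Once this is recorded, your alternativity argument applies verbatim.

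Alternatively, your fallback enumeration over the $21$ pairs proves (b) outright. The ``structural shortcut'' sentence before it is circular, since you cannot invoke the Clifford relations to shorten their own verification. The enumeration does not rely on it, though.
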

\begin{lemma}\label{lemma2.3}
    For $\nu,\mu=1,2,...,7$, $\nu\neq \mu$ we have
    \begin{align*}
        \Sigma_{i=0}^7\mathscr{X}^i h I_\nu\mathscr{X}^i h=0,\Sigma_{i=0}^7 I_\mu \mathscr{X}^ih I_\nu\mathscr{X}^i h=0,\Sigma_{i=0}^7 (I_\mu \mathscr{X}^i I_\nu\mathscr{X}^i h+I_\nu\mathscr{X}^iI_\mu\mathscr{X}^ih)=0.    \end{align*}
\end{lemma}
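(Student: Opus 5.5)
We prove Lemma \ref{lemma2.3} by encoding each $I_\nu$ as a constant $8\times 8$ matrix and reducing all three identities to two algebraic facts: $J_\nu$ is antisymmetric, and $J_\mu, J_\nu$ anticommute for $\mu\neq\nu$. Non-associativity plays no role, because we only ever compose the linear maps $I_\nu$, never multiply octonions.

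\textbf{Setup.} By the table, $I_\nu$ acts linearly on $\mathrm{span}\{\mathscr{X}^0,\dots,\mathscr{X}^7\}$ via a constant signed permutation matrix $J_\nu=((J_\nu)_{ij})$:
\begin{align*}
I_\nu\mathscr{X}^i=\Sigma_{j}(J_\nu)_{ij}\mathscr{X}^j .
\end{align*}
Iterated expressions such as $I_\nu I_\mu\mathscr{X}^i$ mean applying these linear maps successively. So $I_\nu I_\mu I_\nu\mathscr{X}^i$ corresponds to the matrix product $J_\nu J_\mu J_\nu$, with indices contracted in the appropriate order.

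\textbf{Matrix identities.} First, the case $\nu=\mu$ of Lemma \ref{lemma2.2} gives $J_\nu^3=-J_\nu$. Since $J_\nu$ is a signed permutation matrix, it is invertible, so $J_\nu^2=-\mathrm{Id}$. Second, $J_\nu$ is orthogonal, again because it is a signed permutation matrix. Hence
\begin{align*}
J_\nu^T=J_\nu^{-1}=-J_\nu ,
\end{align*}
so $J_\nu$ is antisymmetric. Third, for $\nu\neq\mu$, Lemma \ref{lemma2.2} gives $J_\nu J_\mu J_\nu=J_\mu$. Multiplying on the right by $J_\nu^{-1}=-J_\nu$ yields
\begin{align*}
J_\nu J_\mu=-J_\mu J_\nu ,
\end{align*}
which is the Clifford relation. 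As a cross-check of the index conventions, we would verify one or two entries directly against the table.

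\textbf{The three identities.} Write $v_i=\mathscr{X}^ih$ and $v=(v_0,\dots,v_7)$.

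\emph{First identity.} We have
\begin{align*}
\Sigma_i \mathscr{X}^ih\, I_\nu\mathscr{X}^ih=\Sigma_{i,j}(J_\nu)_{ij}v_iv_j=v^TJ_\nu v=0,
\end{align*}
since $J_\nu$ is antisymmetric.

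\emph{Second identity.} We have
\begin{align*}
\Sigma_i I_\mu\mathscr{X}^ih\, I_\nu\mathscr{X}^ih=(J_\mu v)^T(J_\nu v)=v^TJ_\mu^TJ_\nu v .
\end{align*}
Only the symmetric part of $J_\mu^TJ_\nu$ contributes. That symmetric part is $-\tfrac12(J_\mu J_\nu+J_\nu J_\mu)$, because $(J_\mu^TJ_\nu)^T=J_\nu^TJ_\mu$ and $J^T=-J$. By the Clifford relation it vanishes for $\mu\neq\nu$.

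\emph{Third identity.} This involves second-order operators, so we must not commute $\mathscr{X}^j$ and $\mathscr{X}^k$, whose commutator is a $\partial_t$ term by Lemma \ref{lemma2.1}. Expanding,
\begin{align*}
\Sigma_i I_\mu\mathscr{X}^iI_\nu\mathscr{X}^ih=\Sigma_{j,k}(J_\mu^TJ_\nu)_{jk}\mathscr{X}^j\mathscr{X}^kh .
\end{align*}
Adding the term with $\mu$ and $\nu$ swapped gives
\begin{align*}
\Sigma_{j,k}M_{jk}\,\mathscr{X}^j\mathscr{X}^kh,\qquad M=J_\mu^TJ_\nu+J_\nu^TJ_\mu=-(J_\mu J_\nu+J_\nu J_\mu)=0 .
\end{align*}
The coefficient matrix itself vanishes, so no commutation of $\mathscr{X}^j,\mathscr{X}^k$ is ever needed.

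The only step requiring care is index bookkeeping: ensuring that $I_\nu I_\mu$ corresponds to the correct product of $J$'s, and not its transpose. Since the final conditions (antisymmetry and anticommutation) are invariant under transposition, this cannot affect the conclusion.
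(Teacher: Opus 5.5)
Your argument is correct, and it takes a different route from the paper. The paper gives no proof of this lemma, nor of Lemmas \ref{lemma2.1}--\ref{lemma2.2}; it is implicitly a direct check against the table, identity by identity.

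Your route is structural. You encode $I_\nu$ by constant signed permutation matrices $J_\nu$ and extract $J_\nu^T=-J_\nu$ and $J_\mu J_\nu=-J_\nu J_\mu$ ($\mu\neq\nu$) from Lemma \ref{lemma2.2}. After that, each identity is a line of linear algebra. The first is antisymmetry. The second and third hold because the symmetric part of $J_\mu^TJ_\nu$, respectively the full coefficient matrix $J_\mu^TJ_\nu+J_\nu^TJ_\mu$, is a multiple of $J_\mu J_\nu+J_\nu J_\mu=0$. Two of your remarks are also correct:
\begin{itemize}
\item You handle the third identity at the operator level: the coefficient matrix vanishes, so $\mathscr{X}^j$ and $\mathscr{X}^k$ never need to be commuted.
\item The composition-order convention is harmless, since $J_\nu J_\mu J_\nu$ is palindromic and the target conditions are transpose-invariant.
\end{itemize}

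What your route buys is transparency and generality. It shows the lemma holds for any H-type (Clifford module) structure, and that the non-associativity of the octonions plays no role here. A direct check, by contrast, is tied to the particular sign table.

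The price is that all the content moves into Lemma \ref{lemma2.2}, which the paper also leaves unproved. So the anticommutation still has to be verified from the table once. That check is short. The permutation part of $I_\nu$ is $i\mapsto i\oplus\nu$ (bitwise XOR on $\{0,\dots,7\}$), so $I_\nu I_\mu\mathscr{X}^i$ and $I_\mu I_\nu\mathscr{X}^i$ are both $\pm\mathscr{X}^{i\oplus\mu\oplus\nu}$ and only the signs need comparing. With the table as printed, all $21$ pairs do anticommute, so your proof goes through.
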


The sub-Laplacian operator on the octonionic Heisenberg group defined as
\begin{align*}
    \Delta_G h:=\Sigma_{j=0}^7(\mathscr{X}^j)^2h.
\end{align*}
And 
\begin{align*}
    |\nabla_G h|^2:=\Sigma_{i=0}^7(\mathscr{X}^j h)^2=\Sigma_{i=0}^7(I_\nu\mathscr{X}^i h)^2,
    \end{align*}
for $\nu=1,2,...,7.$

\section{Jerison-Lee type identity }\label{s3} 
In this section, we derive a Jerison-Lee type identity.

Suppose $u>0$ is a solution of (\ref{1.1}), we set  $u=h^{-\frac{Q-2}{4}}$, then $h$ satisfies
\begin{align}\label{3.1}
    \Delta_G h=6h^{-1}|\nabla_G h|^2+32.
\end{align}
The following function is very important in our proof,
\begin{align}\label{3.2}
    f=4+\frac{|\nabla_G h|^2}{4h}.
    \end{align}

We define $\mathcal{E}(\mathscr{X}^i,\mathscr{X}^j)=0$  and $\mathcal{D}(\mathscr{X}^i,\mathscr{X}^j)$, for $i,j =0,1,...,7.$
\begin{align*}
   \mathcal{E}(\mathscr{X}^i,\mathscr{X}^j)=&\frac{h^{-1}}{8}\bigg[\mathscr{X}^j\mathscr{X}^ih+\Sigma_{\nu=1}^7(I_\nu \mathscr{X}^j)(I_\nu \mathscr{X}^i) h-\Delta_G h \delta_{ij}+8\Sigma_{\nu=1}^7h_{t^\nu}\omega_\nu(\mathscr{X}^i,\mathscr{X}^j)\bigg]\\&-\frac{4h^{-2}}{8}\bigg[\mathscr{X}^jh\mathscr{X}^ih+\Sigma_{\nu=1}^7(I_\nu \mathscr{X}^j)h(I_\nu \mathscr{X}^i) h-|\nabla_G h|^2\delta_{ij}\bigg]\\
   =&\frac{h^{-1}}{8}\bigg[\mathscr{X}^j\mathscr{X}^ih+\Sigma_{\nu=1}^7(I_\nu \mathscr{X}^j)(I_\nu \mathscr{X}^i) h\bigg]\\&-\frac{4h^{-2}}{8}\bigg[\mathscr{X}^jh\mathscr{X}^ih+\Sigma_{\nu=1}^7(I_\nu \mathscr{X}^j)h(I_\nu \mathscr{X}^i) h\bigg]\\&
   -\frac{h^{-1}}{8}(32+2h^{-1}|\nabla_G h|^2)\delta_{ij}+h^{-1}\Sigma_{\nu=1}^7h_{t^\nu}\omega_\nu(\mathscr{X}^i,\mathscr{X}^j)\\=&0,\\
   \mathcal{D}(\mathscr{X}^i,\mathscr{X}^j)=&\frac{h^{-1}}{8}\bigg[7\mathscr{X}^j\mathscr{X}^ih-\Sigma_{\nu=1}^7(I_\nu \mathscr{X}^j)(I_\nu \mathscr{X}^i) h-24\Sigma_{\nu=1}^7h_{t^\nu}\omega_\nu(\mathscr{X}^i,\mathscr{X}^j)\bigg].
    \end{align*}
Then from Lemma \ref{lemma2.1} we have
\begin{equation}\label{3.3}
\begin{aligned}
     &\mathcal{E}(\mathscr{X}^i,\mathscr{X}^j)=\mathcal{E}(\mathscr{X}^j,\mathscr{X}^i)=0,\mathcal{D}(\mathscr{X}^i,\mathscr{X}^j)=\mathcal{D}(\mathscr{X}^j,\mathscr{X}^i),\\
     &\Sigma_{i,j=0}^7\mathcal{D}(\mathscr{X}^i,\mathscr{X}^j)\delta_{ij}=0.    \end{aligned}
\end{equation}
And for $\nu=1,2,...,7$,
\begin{align}\label{3.4}
    \mathcal{D}(\mathscr{X}^i,\mathscr{X}^j)+\Sigma_{\nu=1}^7\mathcal{D}(I_\nu \mathscr{X}^i,I_\nu \mathscr{X}^j)=0.
\end{align}
We denote 
    \begin{align}
        \mathcal{D}^2=\Sigma_{i,j=0}^7\mathcal{D}(\mathscr{X}^i,\mathscr{X}^j)^2.   \end{align}
We also have for $\nu,\mu=1,2,...,7$ from Lemma \ref{lemma2.1},
 \begin{equation}\label{3.6}
 \begin{aligned}
& \Sigma_{i,j=0}^7\mathcal{D}(\mathscr{X}^i,I_\nu \mathscr{X}^j)\omega_\mu(\mathscr{X}^i,\mathscr{X}^j)=-\Sigma_{i=0}^7\mathcal{D}(I_\mu \mathscr{X}^i, I_\nu \mathscr{X}^i)=0.\\&\Sigma_{i,j=0}^7\mathcal{D}(\mathscr{X}^i, I_\nu\mathscr{X}^j)\delta_{ij}=0.
 \end{aligned}
 \end{equation}
 We define  
\begin{align*}
   E(\mathscr{X}^j)=&h^{-1}\Sigma_{i=0}^7\mathcal{E}( \mathscr{X}^j, \mathscr{X}^i)\mathscr{X}^i h   \\=&\frac{h^{-2}}{8}\bigg[\Sigma_{i=0}^7\mathscr{X}^i h\mathscr{X}^i\mathscr{X}^jh+\Sigma_{\nu=1}^7\Sigma_{i=0}^7\mathscr{X}^ihI_\nu \mathscr{X}^i(I_\nu \mathscr{X}^j)h\\&+(-32-6h^{-1}|\nabla_G h|^2)\mathscr{X}^j h\bigg]+h^{-2}\Sigma_{\nu=1}^7h_{t^\nu}(I_\nu \mathscr{X}^j)h\\=&0,\\
    D(\mathscr{X}^j)=&h^{-1}\Sigma_{i=0}^7\mathcal{D}( \mathscr{X}^j, \mathscr{X}^i)\mathscr{X}^i h\\=&\frac{h^{-2}}{8}\bigg[7\Sigma_{i=0}^7\mathscr{X}^i h\mathscr{X}^i\mathscr{X}^j h-\Sigma_{\nu=1}^7 \mathscr{X}^ihI_\nu \mathscr{X}^i(I_\nu \mathscr{X}^j) h\bigg]-3h^{-2}\Sigma_{\nu=1}^7h_{t^\nu}(I_\nu \mathscr{X}^j)h,\\
    F_\nu(\mathscr{X}^j)=&h^{-1}\Sigma_{i=0}^7\mathcal{D}(I_\nu \mathscr{X}^j,I_\nu \mathscr{X}^i)\mathscr{X}^i h,\,\textrm{for}\,\nu=1,2,...,7.
\end{align*}   
Then from (\ref{3.4}) we have
\begin{align}\label{3.7}
    D(\mathscr{X}^j)+\Sigma_{\nu=1}^7 F_\nu (\mathscr{X}^j)=0.
\end{align}
We define

\begin{equation}\label{3.8}
\begin{aligned}
    \mathbb{D}(\mathscr{X}^i,\mathscr{X}^j,\mathscr{X}^k)=h^{-1}\bigg[&D(\mathscr{X}^i,\mathscr{X}^k)\mathscr{X}^jh+\Sigma_{\nu=1}^7D(I_\nu \mathscr{X}^i,\mathscr{X}^k)I_\nu \mathscr{X}^j h\\&+D(\mathscr{X}^j,\mathscr{X}^k)\mathscr{X}^ih+\Sigma_{\nu=1}^7D(I_\nu \mathscr{X}^j,\mathscr{X}^k)I_\nu \mathscr{X}^i h\bigg].
\end{aligned}
\end{equation}

\begin{lemma} \label{lemma 3.1}
For $\nu=1,2,..,7$, we have
\begin{align}\label{3.9}
        \Sigma_{i,j=0}^7(I_\nu \mathscr{X}^i )\mathscr{X}^j h \mathcal{D}(I_\nu \mathscr{X}^i,I_\nu \mathscr{X}^j)= \Sigma_{i,j=0}^7 \mathscr{X}^i \mathscr{X}^j h \mathcal{D}( \mathscr{X}^i,I_\nu \mathscr{X}^j)=0.
    \end{align}
    \end{lemma}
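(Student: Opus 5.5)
The idea is to rewrite the first sum as the second one by reindexing, and then to show the second sum vanishes by splitting the horizontal Hessian into its symmetric and antisymmetric parts.

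\emph{Step 1: the first equality.} For fixed $\nu$, the operator $I_\nu$ acts on $\mathrm{span}\{\mathscr{X}^0,\dots,\mathscr{X}^7\}$ by a signed permutation, as the table defining $I_\nu$ shows. So $\{I_\nu\mathscr{X}^i\}_{i}=\{\epsilon_i\mathscr{X}^{\sigma(i)}\}_i$ with $\epsilon_i=\pm1$. In $\Sigma_i (I_\nu\mathscr{X}^i)\mathscr{X}^jh\,\mathcal{D}(I_\nu\mathscr{X}^i,I_\nu\mathscr{X}^j)$ the vector $I_\nu\mathscr{X}^i$ enters twice: once as the outer derivative and once in the first slot of the bilinear form $\mathcal{D}$. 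The sign $\epsilon_i$ therefore appears squared, and
\[
\Sigma_{i=0}^7(I_\nu\mathscr{X}^i)\mathscr{X}^jh\,\mathcal{D}(I_\nu\mathscr{X}^i,I_\nu\mathscr{X}^j)=\Sigma_{k=0}^7\mathscr{X}^k\mathscr{X}^jh\,\mathcal{D}(\mathscr{X}^k,I_\nu\mathscr{X}^j).
\]
Summing over $j$ gives the first equality in (\ref{3.9}).

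\emph{Step 2: splitting the Hessian.} It remains to show that $\Sigma_{k,j}\mathscr{X}^k\mathscr{X}^jh\,\mathcal{D}(\mathscr{X}^k,I_\nu\mathscr{X}^j)=0$. I would write
\[
\mathscr{X}^k\mathscr{X}^jh=S_{kj}+\tfrac12[\mathscr{X}^k,\mathscr{X}^j]h,\qquad S_{kj}=\tfrac12(\mathscr{X}^k\mathscr{X}^j+\mathscr{X}^j\mathscr{X}^k)h .
\]
By Lemma \ref{lemma2.1}, the commutator part is a constant multiple of $\Sigma_\mu h_{t^\mu}\omega_\mu(\mathscr{X}^k,\mathscr{X}^j)$. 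Its contraction with $\mathcal{D}(\mathscr{X}^k,I_\nu\mathscr{X}^j)$ therefore vanishes by the first line of (\ref{3.6}).

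\emph{Step 3: the symmetric part (main obstacle).} The remaining term is $\Sigma_{k,j}S_{kj}\mathcal{D}(\mathscr{X}^k,I_\nu\mathscr{X}^j)$, and this is where I expect the difficulty. The matrix $M_{kj}=\mathcal{D}(\mathscr{X}^k,I_\nu\mathscr{X}^j)$ is not antisymmetric in general. Indeed, if $\mathcal{D}$ were anti-invariant under every $I_\nu$, then (\ref{3.4}) would force $\mathcal{D}=0$. So the cancellation has to come from expanding $\mathcal{D}$ explicitly.

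I would split $\mathcal{D}(\mathscr{X}^k,I_\nu\mathscr{X}^j)$ into the following pieces and contract each with $S$:
\begin{itemize}
\item the term $7\,(I_\nu\mathscr{X}^j)\mathscr{X}^kh$;
\item the terms $(I_\mu I_\nu\mathscr{X}^j)(I_\mu\mathscr{X}^k)h$;
\item the $\omega_\mu$ terms.
\end{itemize}
Each second derivative appearing here I would again split into its symmetric part (an entry of $S$ after reindexing) plus a $t$-derivative commutator.

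Let $J_\nu$ be the antisymmetric orthogonal matrix of $I_\nu$. The purely symmetric contributions then become traces:
\begin{itemize}
\item the $7\,(I_\nu\mathscr{X}^j)\mathscr{X}^kh$ piece gives $\mathrm{tr}(S^2J_\nu)$, which is $0$ because $S^2$ is symmetric and $J_\nu$ is antisymmetric;
\item the $\mu=\nu$ piece gives a multiple of $\mathrm{tr}(SJ_\nu S)$, which is also $0$;
\item the $\mu\neq\nu$ pieces give $\mathrm{tr}(SJ_\mu^TS\,J_{\mu\nu})$, where $J_{\mu\nu}$ is the matrix of the operator $\mathscr{X}^i\mapsto I_\mu I_\nu\mathscr{X}^i$, taken as written (octonionic multiplication is non-associative).
\end{itemize}

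For the $\mu\neq\nu$ traces I would first try Lemma \ref{lemma2.2}, $I_\nu I_\mu I_\nu\mathscr{X}^i=I_\mu\mathscr{X}^i$, to obtain anticommutation-type relations. Failing that, I would pair the $\mu$-th term with a reindexed partner. The aim is that these traces cancel in pairs, or together with the first-order terms, using the third identity of Lemma \ref{lemma2.3}.

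The mixed terms, of the form $h_{t^\mu}$ times $S$, should be handled by the same trace argument applied to $\Sigma S_{kj}(J_\mu J_\nu)_{kj}$-type contractions, together with the trace identity in the second line of (\ref{3.6}).

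If the matrix bookkeeping becomes opaque because of non-associativity, the fallback is to verify the vanishing of the symmetric part directly from the explicit multiplication table for each $\nu$. By the $G_2$-type symmetry of the table this reduces to the case $\nu=1$.
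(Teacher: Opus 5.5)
Your Steps 1 and 2 are correct. Step 3, however, is the entire content of the lemma, and it is not carried out: you list strategies rather than give an argument, and the first strategy cannot succeed. The $\mu\neq\nu$ pieces do not vanish individually, so no relation such as Lemma \ref{lemma2.2} applied term by term will remove them.

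For example, take $\nu=1$, $\mu=2$, and let $S$ have only the nonzero entries $S_{01}=S_{10}=1$, $S_{22}=\alpha$, $S_{33}=\beta$. From the table, $I_2I_1$ sends $\mathscr{X}^0,\mathscr{X}^1,\mathscr{X}^2,\mathscr{X}^3$ to $-\mathscr{X}^3,-\mathscr{X}^2,\mathscr{X}^1,\mathscr{X}^0$. One then finds $\Sigma_{k,j}S_{kj}\,S(I_2I_1\mathscr{X}^j,I_2\mathscr{X}^k)=2(\beta-\alpha)$, while the $\mu=3$ piece gives $2(\alpha-\beta)$. So the cancellation appears only after different $\mu$ are combined, and nothing in the proposal controls that combination. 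Lemma \ref{lemma2.3} only supplies identities that are linear in the Hessian, and ``check the table'' is a plan, not a proof.

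The mixed $h_t$-terms have the same problem. Their $\mu=\nu$ parts are not traces of a symmetric matrix against an antisymmetric one. They are multiples of $h_{t^\nu}\Delta_G h$, with coefficients $14$, $-24$ and $10$ (times $h^{-1}/8$) from the $7$-term, the $\omega$-term and the $\mu$-sum respectively. They cancel only through this explicit count.

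The missing ingredient is the identity $\mathcal{E}\equiv 0$. Its second-order bracket vanishes identically, and it evaluates the whole sum over $\mu$ at once:
\[
\Sigma_{\mu=1}^7(I_\mu\mathscr{X}^l)(I_\mu\mathscr{X}^k)h=-\mathscr{X}^l\mathscr{X}^kh+\Delta_Gh\,\delta_{kl}-8\Sigma_{\mu=1}^7h_{t^\mu}\omega_\mu(\mathscr{X}^k,\mathscr{X}^l).
\]
The paper uses it as follows.
\begin{itemize}
\item First it shows $\Sigma_{i,j}\mathcal{D}(\mathscr{X}^j,\mathscr{X}^i)\mathcal{D}(\mathscr{X}^i,I_\nu\mathscr{X}^j)=0$. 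This is your Step 1 reindexing applied to $j$: using $I_\nu^2=-1$ and the symmetry of $\mathcal{D}$, the sum equals its own negative.
\item It then expands only the \emph{first} factor and substitutes the identity above. The bracket becomes $8\mathscr{X}^i\mathscr{X}^jh-\Delta_Gh\,\delta_{ij}-16\Sigma_\mu h_{t^\mu}\omega_\mu(\mathscr{X}^j,\mathscr{X}^i)$.
\item The last two terms drop out by (\ref{3.6}), and what remains is the claim.
\end{itemize}

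Within your own framework the same identity gives directly $\mathcal{D}(\mathscr{X}^k,\mathscr{X}^l)=h^{-1}\big(S_{kl}-\tfrac18\Delta_Gh\,\delta_{kl}\big)$, because all $h_t$-terms cancel. The symmetric part of your contraction is then $h^{-1}\big[\mathrm{tr}(S^2J_\nu^T)-\tfrac18\Delta_Gh\,\mathrm{tr}(SJ_\nu^T)\big]=0$. The vanishing comes from $S\mathcal{D}$ being symmetric, not from any property of the products $I_\mu I_\nu$, so non-associativity never enters.
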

  \begin{proof}
    We  have
    \begin{equation*}
        \begin{aligned}
    &\Sigma_{i,j=0}^7\mathcal{D}(\mathscr{X}^j,\mathscr{X}^i)\mathcal{D}(\mathscr{X}^i,I_\nu \mathscr{X}^j)=\Sigma_{i,j=0}^7\mathcal{D}(\mathscr{X}^i,\mathscr{X}^j)\mathcal{D}(\mathscr{X}^i,I_\nu \mathscr{X}^j)\\=&\Sigma_{i,j=0}^7\mathcal{D}(\mathscr{X}^i,-I_\nu \mathscr{X}^j)\mathcal{D}(\mathscr{X}^i, \mathscr{X}^j)=-\Sigma_{i,j=0}^7\mathcal{D}(\mathscr{X}^i,I_\nu \mathscr{X}^j)\mathcal{D}(\mathscr{X}^j, \mathscr{X}^i)\\=&0.
    \end{aligned}    \end{equation*}
That is
\begin{equation}\label{3.10}
\begin{aligned}
      \Sigma_{i,j=0}^7\bigg[7\mathscr{X}^i\mathscr{X}^j h-\Sigma_{\mu=1}^7(I_\mu \mathscr{X}^i) I_\mu \mathscr{X}^j h-24\Sigma_{\mu=1}^7h_{t^\mu}\omega_\mu (\mathscr{X}^j,\mathscr{X}^i)\bigg]D(\mathscr{X}^i,I_\nu \mathscr{X}^j)=0. 
    \end{aligned}
\end{equation}
We also have
\begin{align}
\mathscr{X}^i\mathscr{X}^jh+\Sigma_{\mu=1}^7(I_\mu \mathscr{X}^j)(I_\mu \mathscr{X}^i)h-\Delta_G h \delta_{ij}+8\Sigma_{\mu=1}^7h_{t^\mu}\omega_\mu (\mathscr{X}^i,\mathscr{X}^j)=0.
\end{align}
Last, from (\ref{3.6}), we have 
\begin{align}
   \Sigma_{\mu=1}^7 \Sigma_{i,j=0}^7 \mathcal{D}(\mathscr{X}^i,I_\nu \mathscr{X}^j)h_{t^\mu}\omega_\mu (\mathscr{X}^i,\mathscr{X}^j)=0, \Sigma_{i,j=0}^7\mathcal{D}(\mathscr{X}^i, I_\nu\mathscr{X}^j)\delta_{ij}=0.\end{align}
Combining these, we get (\ref{3.9}).
 \end{proof}  

  We compute the derivative of $f$ defined in (\ref{3.2}).
  \begin{lemma}\label{lemma3.2} For $i=0,1,...,7,$ we have
  \begin{equation}\label{3.13}       \begin{aligned}
    2\mathscr{X}^i f&=h(D(\mathscr{X}^i)+E(\mathscr{X}^i))- 2\Sigma_{\nu=1}^7h^{-1}h_{t^\nu}(I_\nu \mathscr{X}^i)h+h^{-1}f\mathscr{X}^ih\\&=hD(\mathscr{X}^i)- 2\Sigma_{\nu=1}^7h^{-1}h_{t^\nu}(I_\nu \mathscr{X}^i)h+h^{-1}f\mathscr{X}^ih.\end{aligned}   \end{equation}
          \end{lemma}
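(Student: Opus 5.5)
The identity is a direct computation. I would differentiate $f=4+\frac{|\nabla_G h|^2}{4h}$, rewrite the second derivatives of $h$ that appear using the definitions of $D$ and $E$, and let the Lie brackets of Lemma \ref{lemma2.1} produce the $h_{t^\nu}$ terms. The second equality in (\ref{3.13}) is immediate once the first holds, because $E(\mathscr{X}^i)=0$ by definition (it is built from $\mathcal{E}\equiv 0$, i.e.\ from (\ref{3.1})). So everything reduces to the first equality.

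\textbf{Differentiating $f$.} By the Leibniz rule,
\begin{align*}
2\mathscr{X}^i f=\frac{h^{-1}}{2}\Sigma_{j=0}^7\mathscr{X}^jh\,\mathscr{X}^i\mathscr{X}^jh-\frac{h^{-2}}{2}|\nabla_G h|^2\mathscr{X}^ih .
\end{align*}
\textbf{Adding $D$ and $E$.} From the explicit formulas for $D(\mathscr{X}^i)$ and $E(\mathscr{X}^i)$, the mixed terms $\Sigma_\nu\Sigma_j \mathscr{X}^jh\,I_\nu\mathscr{X}^j(I_\nu\mathscr{X}^i)h$ cancel, since they enter with coefficients $-1$ and $+1$. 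This leaves
\begin{align*}
h(D+E)(\mathscr{X}^i)=h^{-1}\Sigma_{j}\mathscr{X}^jh\,\mathscr{X}^j\mathscr{X}^ih-\frac{h^{-1}}{8}(32+6h^{-1}|\nabla_G h|^2)\mathscr{X}^ih-2h^{-1}\Sigma_\nu h_{t^\nu}I_\nu\mathscr{X}^ih .
\end{align*}
\textbf{Commuting the derivatives.} Next I replace $\mathscr{X}^j\mathscr{X}^ih$ by $\mathscr{X}^i\mathscr{X}^jh+[\mathscr{X}^j,\mathscr{X}^i]h$. By Lemma \ref{lemma2.1}, $[\mathscr{X}^j,\mathscr{X}^i]=4\Sigma_\nu\omega_\nu(\mathscr{X}^i,\mathscr{X}^j)\partial_{t^\nu}$ up to the sign conventions. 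Contracting with $\mathscr{X}^jh$ produces $\pm4\Sigma_\nu h_{t^\nu}I_\nu\mathscr{X}^ih$, using the table of $I_\nu$ to identify $\Sigma_j\omega_\nu(\mathscr{X}^i,\mathscr{X}^j)\mathscr{X}^jh$ with $\pm I_\nu\mathscr{X}^ih$.

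\textbf{Matching the remaining terms.} The coefficient of $\Sigma_j\mathscr{X}^jh\,\mathscr{X}^i\mathscr{X}^jh$ is $h^{-1}$ in $h(D+E)$. The target right-hand side, $hD+hE-2h^{-1}\Sigma h_{t}I h+h^{-1}f\mathscr{X}^ih$, must reproduce $\frac{h^{-1}}{2}$ times it. This mismatch signals that I have mis-normalized: the intended normalization should make $\frac{1}{8}(7+1)=1$ match $2\cdot\frac14=\frac12$ after accounting for the $h^{-1}$ factors. I would recheck the constants, possibly reading $f$'s gradient term as $\frac{|\nabla_G h|^2}{4h}$ scaled so that $2\mathscr{X}^i f$ carries $\frac{h^{-1}}{2}$. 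The $\mathscr{X}^ih$ terms should combine as $-4h^{-1}-\frac{3}{4}h^{-2}|\nabla_G h|^2$ plus $h^{-1}f=4h^{-1}+\frac14h^{-2}|\nabla_G h|^2$, giving $-\frac12 h^{-2}|\nabla_G h|^2$, which matches. The $h_{t^\nu}$ terms should combine to the stated $-2$.

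The main obstacle is sign and constant bookkeeping. This includes the orientation of $\omega_\nu$ versus the brackets, and the identification of $\Sigma_j\omega_\nu(\mathscr{X}^i,\mathscr{X}^j)\mathscr{X}^j$ with $I_\nu\mathscr{X}^i$. I would verify these on a single pair, say $\nu=1$ and $i=0$, using the explicit table of $I_\nu$.
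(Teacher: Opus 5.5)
Your route is the same as the paper's. You differentiate $f$, add $D$ and $E$ so that the $\Sigma_\nu I_\nu\mathscr{X}^j(I_\nu\mathscr{X}^i)h$ terms cancel, and commute derivatives with Lemma~\ref{lemma2.1}. Your formula for $h(D+E)(\mathscr{X}^i)$, your check of the $\mathscr{X}^ih$ terms, and the reduction of the second equality to $E\equiv 0$ are all correct. But the argument does not close as written, because of a slip in the first step that you then misdiagnose. Since $\mathscr{X}^i(\mathscr{X}^jh)^2=2\,\mathscr{X}^jh\,\mathscr{X}^i\mathscr{X}^jh$, the correct derivative is
\[
2\mathscr{X}^if=h^{-1}\Sigma_{j=0}^7\mathscr{X}^jh\,\mathscr{X}^i\mathscr{X}^jh-\frac12h^{-2}|\nabla_Gh|^2\mathscr{X}^ih ,
\]
with $h^{-1}$, not $\frac{h^{-1}}{2}$, in front of the sum. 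Your version is not the derivative of $4+c|\nabla_Gh|^2/h$ for any constant $c$: you dropped the factor $2$ in one term and kept it in the other. With the correct coefficient there is no mismatch, since $h^{-1}$ appears on both sides. So nothing in the definitions of $f$, $D$, $E$ needs renormalizing. In fact, the $\mathscr{X}^ih$-term cancellation you verified forces $c=\frac14$, so any rescaling of $f$ would break it.

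The second gap is that you leave the commutator contribution as $\pm4\Sigma_\nu h_{t^\nu}(I_\nu\mathscr{X}^i)h$ and only assert that the $h_{t^\nu}$ terms ``should'' combine correctly. That sign is the entire content of the remaining bookkeeping. The term $h(D+E)$ contributes $-2h^{-1}\Sigma_\nu h_{t^\nu}(I_\nu\mathscr{X}^i)h$, and the explicit term in (\ref{3.13}) contributes another $-2h^{-1}\Sigma_\nu h_{t^\nu}(I_\nu\mathscr{X}^i)h$. So you need
\[
\Sigma_{j=0}^7\mathscr{X}^jh\,\mathscr{X}^j\mathscr{X}^ih=\Sigma_{j=0}^7\mathscr{X}^jh\,\mathscr{X}^i\mathscr{X}^jh+4\Sigma_{\nu=1}^7h_{t^\nu}(I_\nu\mathscr{X}^i)h
\]
with a plus sign; with a minus sign the two sides of (\ref{3.13}) would differ by $8h^{-1}\Sigma_\nu h_{t^\nu}(I_\nu\mathscr{X}^i)h$. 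It is indeed a plus. Your formula $[\mathscr{X}^j,\mathscr{X}^i]=4\Sigma_\nu\omega_\nu(\mathscr{X}^i,\mathscr{X}^j)\partial_{t^\nu}$ holds exactly, with no sign ambiguity. For $i=0$, $[\mathscr{X}^\nu,\mathscr{X}^0]=4\partial_{t^\nu}$ and $I_\nu\mathscr{X}^0=\mathscr{X}^\nu$. For $i=1$, $[\mathscr{X}^0,\mathscr{X}^1]=-4\partial_{t^1}$ matches $I_1\mathscr{X}^1=-\mathscr{X}^0$, and so on through the table. The identification $\Sigma_j\omega_\nu(\mathscr{X}^i,\mathscr{X}^j)\mathscr{X}^jh=(I_\nu\mathscr{X}^i)h$ is the same one already built into the formula for $D(\mathscr{X}^j)$. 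With both corrections, the $h_{t^\nu}$ terms on the right total $-2-2+4=0$, the remaining terms match as above, and your computation becomes exactly the paper's proof.
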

\begin{proof}

 First, we have
    \begin{align*}
    D(\mathscr{X}^i)+E(\mathscr{X}^i)=&h^{-2}\Sigma_{j=0}^7\mathscr{X}^jh\mathscr{X}^j\mathscr{X}^ih-2\Sigma_{\nu=1}^7h^{-2}h_{t^\nu}(I_\nu \mathscr{X}^i)h\\&-\frac{3}{4}h^{-3}|\nabla_G h|^2\mathscr{X}^ih-4h^{-2}\mathscr{X}^ih.
\end{align*}
Second,
\begin{align*}
    \Sigma_{j=0}^7\mathscr{X}^j h \mathscr{X}^j\mathscr{X}^ih=\Sigma_{j=0}^7\mathscr{X}^j h \mathscr{X}^i\mathscr{X}^j h+4\Sigma_{\nu=1}^7h_{t^\nu}(I_\nu \mathscr{X}^i) h.
    \end{align*} 
    Last, we have
\begin{align*}
    2\mathscr{X}^if&=-\frac{1}{2}h^{-2}|\nabla_G h|^2\mathscr{X}^ih+h^{-1}\Sigma_{j=0}^7\mathscr{X}^j h \mathscr{X}^i\mathscr{X}^j h\\&=-2h^{-1}f\mathscr{X}^ih+8h^{-1}\mathscr{X}^ih+h^{-1}\Sigma_{j=0}^7\mathscr{X}^jh\mathscr{X}^j\mathscr{X}^ih-4\Sigma_{\nu=1}^7h^{-1}h_{t^\nu}(I_\nu \mathscr{X}^i)h.
    \end{align*}
    Combing these, we have (\ref{3.13}).
    \end{proof}
For $i=0,1,...,7; \nu=1,2,...,7$, we define $B_\nu(\mathscr{X}^i)$ as follow:
\begin{align}\label{3.14}
    B_\nu(\mathscr{X}^i)=2h^{-1}I_\nu \mathscr{X}^i h_{t^\nu}-\Sigma_{\mu=1}^7h^{-2}h_{t^\mu}(I_\mu \mathscr{X}^i)h+\frac{1}{2}h^{-2}f\mathscr{X}^ih.
\end{align}
 We also define
\begin{align}\label{3.15}
    B(\mathscr{X}^i):=\Sigma_{\nu=1}^7 B_\nu (\mathscr{X}^i)=2h^{-1}\Sigma_{\nu=1}^7I_\nu \mathscr{X}^i h_{t^\nu}-7\Sigma_{\mu=1}^7h^{-2}h_{t^\mu}(I_\mu \mathscr{X}^i)h+\frac{7}{2}h^{-2}f\mathscr{X}^ih.\end{align}

We discuss the relation between $\mathcal{D}$ and $B_\nu$.
\begin{lemma}\label{lemma3.3} For $j=0,1,...,7$, we have
\begin{equation}\label{3.16}
     \begin{aligned}
h^{11}\Sigma_{i=0}^7\mathscr{X}^i \bigg[h^{-11}\mathcal{D}(\mathscr{X}^j,\mathscr{X}^i)\bigg]=&3\bigg[\frac{7E(\mathscr{X}^j)-D(\mathscr{X}^j)}{2}+B(\mathscr{X}^j)\bigg]\\=&-\frac{3}{2}D(\mathscr{X}^j)+3B(\mathscr{X}^j).
        \end{aligned}\end{equation}
   \end{lemma}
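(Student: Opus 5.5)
The identity is a direct computation of a divergence, so I will expand both sides and match them term by term. The first step is to use the Leibniz rule:
\[
h^{11}\Sigma_i\mathscr{X}^i[h^{-11}\mathcal{D}(\mathscr{X}^j,\mathscr{X}^i)]=\Sigma_i\mathscr{X}^i\mathcal{D}(\mathscr{X}^j,\mathscr{X}^i)-11h^{-1}\Sigma_i\mathcal{D}(\mathscr{X}^j,\mathscr{X}^i)\mathscr{X}^ih .
\]
By the definition of $D$, the last sum equals $-11D(\mathscr{X}^j)$.

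Next I write $\mathcal{D}=h^{-1}\widetilde{\mathcal{D}}/8$, where $\widetilde{\mathcal{D}}$ is the bracket in the definition of $\mathcal{D}$. Then
\[
\Sigma_i\mathscr{X}^i\mathcal{D}=-h^{-1}\Sigma_i\mathcal{D}\,\mathscr{X}^ih+\tfrac{h^{-1}}{8}\Sigma_i\mathscr{X}^i\widetilde{\mathcal{D}}(\mathscr{X}^j,\mathscr{X}^i),
\]
so the left side becomes $-12D(\mathscr{X}^j)+\frac{h^{-1}}{8}\Sigma_i\mathscr{X}^i\widetilde{\mathcal{D}}(\mathscr{X}^j,\mathscr{X}^i)$. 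The core of the work is the third-order expression
\[
\Sigma_i\Big[7\mathscr{X}^i\mathscr{X}^i\mathscr{X}^jh-\Sigma_\nu\mathscr{X}^i(I_\nu\mathscr{X}^i)(I_\nu\mathscr{X}^j)h-24\Sigma_\nu \mathscr{X}^ih_{t^\nu}\,\omega_\nu(\mathscr{X}^j,\mathscr{X}^i)\Big].
\]
I reduce it to $\mathscr{X}^j\Delta_G h$ plus first-order $t$-derivative terms, commuting vector fields with Lemma \ref{lemma2.1}. Since $[\mathscr{X}^i,\mathscr{X}^j]=-4\Sigma_\nu\omega_\nu(\mathscr{X}^i,\mathscr{X}^j)\partial_{t^\nu}$ and $\partial_{t^\nu}$ is central, I get
\[
\Sigma_i\mathscr{X}^i\mathscr{X}^i\mathscr{X}^jh=\mathscr{X}^j\Delta_Gh+c\,\Sigma_\nu I_\nu\mathscr{X}^jh_{t^\nu}
\]
for an explicit constant $c$, obtained by commuting twice. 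For the middle term, I use that $\{I_\nu\mathscr{X}^i\}_i$ is, up to signs, a permutation of $\{\mathscr{X}^i\}$. Then $\Sigma_i(I_\nu\mathscr{X}^i)(I_\nu\mathscr{X}^i)=\Delta_G$, and, after reindexing, $\Sigma_i\mathscr{X}^i(I_\nu\mathscr{X}^i)=\Sigma_i (I_\nu\mathscr{X}^i)\mathscr{X}^i$ up to brackets. The $\omega_\nu$ term gives directly $\pm24\Sigma_\nu I_\nu\mathscr{X}^jh_{t^\nu}$. Lemma \ref{lemma2.2} is needed to identify $I_\nu I_\mu I_\nu\mathscr{X}^j$ when the commutators produce $I_\mu$ acting on $I_\nu\mathscr{X}^j$. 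This is where nonassociativity bites, and the off-diagonal ($\mu\ne\nu$) contributions must cancel or combine into $\Sigma_\nu I_\nu\mathscr{X}^jh_{t^\nu}$.

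After this reduction I substitute $\Delta_G h=6h^{-1}|\nabla_Gh|^2+32$ from (\ref{3.1}), so that
\[
\mathscr{X}^j\Delta_Gh=-6h^{-2}|\nabla_G h|^2\mathscr{X}^jh+12h^{-1}\Sigma_k\mathscr{X}^kh\,\mathscr{X}^j\mathscr{X}^kh .
\]
I rewrite $\Sigma_k\mathscr{X}^kh\,\mathscr{X}^j\mathscr{X}^kh$ via the commutation identity in the proof of Lemma \ref{lemma3.2}, and then via $D+E$ from that proof. I also express $|\nabla_G h|^2=4h(f-4)$, which produces the $\frac72h^{-2}f\mathscr{X}^jh$ term of $B$. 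The terms $-7h^{-2}\Sigma_\mu h_{t^\mu}I_\mu\mathscr{X}^jh$ come from the $\Sigma_\nu h_{t^\nu}(I_\nu\mathscr{X}^j)h$ pieces inside $D$ and $E$. The second-order $t$-terms $\Sigma_\nu I_\nu\mathscr{X}^jh_{t^\nu}$ must collect to exactly $6h^{-1}\Sigma_\nu I_\nu\mathscr{X}^jh_{t^\nu}$, that is, $3$ times the first term of $B$.

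Finally, I will use $E\equiv0$ (from $\mathcal{E}=0$) and regroup the $D$-multiples, including $-12D$, into $3\big[\frac{7E-D}{2}+B\big]$. The second equality of the statement then follows since $E=0$. I expect the main obstacle to be the bookkeeping of the middle term $\Sigma_{i,\nu}\mathscr{X}^i(I_\nu\mathscr{X}^i)(I_\nu\mathscr{X}^j)h$. There one has to commute $I_\nu\mathscr{X}^j$ to the front, and the resulting brackets $[\mathscr{X}^i,I_\nu\mathscr{X}^j]$ summed over $i$ involve products like $I_\mu I_\nu\mathscr{X}^j$ that are not simply $(I_\mu I_\nu)\mathscr{X}^j$. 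My first attempt is to verify the needed coefficient sums using Lemma \ref{lemma2.2} and Lemma \ref{lemma2.3} together with the explicit multiplication table. If that fails, I will check on a single index, say $j=0$, using the table, and then extend to all $j$ by the symmetry of the table.
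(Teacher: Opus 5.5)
Your strategy is the paper's. Leibniz gives $-12D(\mathscr{X}^j)+\frac{h^{-1}}{8}\Sigma_i\mathscr{X}^i\widetilde{\mathcal{D}}(\mathscr{X}^j,\mathscr{X}^i)$, the third-order terms are reduced with Lemma \ref{lemma2.1}, $7\mathscr{X}^j\Delta_G h=168\,\mathscr{X}^j f$ is rewritten by (\ref{3.1}) and Lemma \ref{lemma3.2}, and one regroups using $E=0$. But the proposal stops exactly where the content of the lemma lies. You leave $c$ undetermined and write $\pm 24$ for the $\omega$-term. You also treat the middle term $\Sigma_{i,\nu}\mathscr{X}^i(I_\nu\mathscr{X}^i)(I_\nu\mathscr{X}^j)h$ as an unresolved obstacle, to be attacked with Lemma \ref{lemma2.2} or by checking $j=0$ and appealing to ``symmetry of the table''. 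The statement is a coefficient identity, so not verifying that the $\Sigma_\nu (I_\nu\mathscr{X}^j)h_{t^\nu}$ terms total exactly $6h^{-1}$ is a real gap. You have also misplaced where the difficulty is.

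The missing observation is that the middle term needs no commutation past $I_\nu\mathscr{X}^j$. Reindex $\mathscr{X}^i\mapsto I_\nu\mathscr{X}^i$, which is a signed permutation of the frame, and use $I_\nu I_\nu\mathscr{X}^i=-\mathscr{X}^i$. This gives $\Sigma_i(I_\nu\mathscr{X}^i)\mathscr{X}^i=-\Sigma_i\mathscr{X}^i(I_\nu\mathscr{X}^i)$, so as an operator $\Sigma_i\mathscr{X}^i(I_\nu\mathscr{X}^i)=\frac12\Sigma_i[\mathscr{X}^i,I_\nu\mathscr{X}^i]=-16\partial_{t^\nu}$, since each of the eight brackets equals $-4\partial_{t^\nu}$ by Lemma \ref{lemma2.1}. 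Applied to the function $(I_\nu\mathscr{X}^j)h$, the middle term is simply $+16\Sigma_\nu(I_\nu\mathscr{X}^j)h_{t^\nu}$. No products $I_\mu I_\nu\mathscr{X}^j$ arise, and neither nonassociativity nor Lemma \ref{lemma2.2} plays any role in this lemma. Even on your route, moving $I_\nu\mathscr{X}^j$ to the front costs $[\Sigma_i\mathscr{X}^i(I_\nu\mathscr{X}^i),\,I_\nu\mathscr{X}^j]=[-16\partial_{t^\nu},I_\nu\mathscr{X}^j]=0$.

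The other constants follow the same way:
\begin{itemize}
\item $\Sigma_i\omega_\nu(\mathscr{X}^j,\mathscr{X}^i)\mathscr{X}^i=I_\nu\mathscr{X}^j$ fixes the last term as $-24\Sigma_\nu(I_\nu\mathscr{X}^j)h_{t^\nu}$.
\item $\Sigma_i(\mathscr{X}^i)^2\mathscr{X}^j=\mathscr{X}^j\Delta_G+2\Sigma_i[\mathscr{X}^i,\mathscr{X}^j]\mathscr{X}^i$ gives $c=8$.
\end{itemize}
Hence the count is $7\cdot 8+16-24=48$ and $\frac{h^{-1}}{8}\cdot 48=6h^{-1}$, which is three times the first term of $B$. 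Meanwhile $-12D+\frac{21}{2}(D+E)=\frac32(7E-D)$. The remaining terms $-21h^{-2}\Sigma_\mu h_{t^\mu}(I_\mu\mathscr{X}^j)h+\frac{21}{2}h^{-2}f\mathscr{X}^jh$ are three times the corresponding pieces of $B$. The computation is uniform in $j$, so no symmetry argument is needed.
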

\begin{proof}
We start with
    \begin{align*}
    &h^{11}\Sigma_{i=0}^7\mathscr{X}^i\bigg[h^{-11}\mathcal{D}(\mathscr{X}^j,\mathscr{X}^i)\bigg]\\=&\frac{h^{-1}}{8}\bigg(7\Sigma_{i=0}^7   
    (\mathscr{X}^i)^2\mathscr{X}^jh-        
    \Sigma_{i=0}^7    
    \Sigma_{\nu=1}^7\mathscr{X}^i(I_\nu\mathscr{X}^i)(I_\nu \mathscr{X}^j)h-24\Sigma_{\nu=1}^7(I_\nu \mathscr{X}^j)h_{t^\nu}\bigg)-12D(\mathscr{X}^j).
\end{align*}
From Lemma \ref{lemma2.1}, we have
\begin{align*}
    \Sigma_{i=0}^7\mathscr{X}^i (I_\nu \mathscr{X}^i)=-16\partial_{t^\nu},
\end{align*}
and
\begin{align*}
    7\Sigma_{i=0}^7 (\mathscr{X}^i)^2\mathscr{X}^jh=7\Sigma_{i=0}^7\mathscr{X}^j(\mathscr{X}^i)^2 h+56 \Sigma_{\nu=1}^7(I_\nu \mathscr{X}^j)h_{t^\nu} .
    \end{align*}
    Using (\ref{3.1}) and Lemma \ref{lemma3.2}, we have
    \begin{align*}
       & 7\mathscr{X}^j(\Delta_G h)=168\mathscr{X}^jf\\=&84h(D(\mathscr{X}^j)+E(\mathscr{X}^j))-168\Sigma_{\nu=1}^7h^{-1}h_{t^\nu}(I_\nu \mathscr{X}^j)h+84h^{-1}f\mathscr{X}^jh.
    \end{align*}
  Combining these, we have (\ref{3.16}).  
    \end{proof}

 To derive Jerison-Lee type identity, we need the following propositions.        
\begin{proposition}
    \begin{equation}
    \begin{aligned}
O_1=&h^{11}\Sigma_{i,j=0}^7\mathscr{X}^i\bigg[h^{-11}f\mathscr{X}^j h\mathcal{D}(\mathscr{X}^i,\mathscr{X}^j)\bigg]\\=& fh\mathcal{D}^2+\Sigma_{i=0}^7D(\mathscr{X}^i)\bigg[\frac{1}{2}h^2D(\mathscr{X}^i)-\Sigma_{\nu=1}^7h_{t^\nu}I_\nu \mathscr{X}^i h+\frac{1}{2}f\mathscr{X}^i h\bigg]\\&+\Sigma_{i=0}^7 f\mathscr{X}^i h\bigg[-\frac{3}{2}D(\mathscr{X}^i)+3B(\mathscr{X}^i)\bigg] 
\end{aligned}
    \end{equation}
\end{proposition}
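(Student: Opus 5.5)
Expanding with the Leibniz rule, $O_1$ splits into three pieces:
\begin{align*}
O_1=\Sigma_{i,j=0}^7 (\mathscr{X}^i f)\,\mathscr{X}^j h\,\mathcal{D}(\mathscr{X}^i,\mathscr{X}^j)
+f\,\Sigma_{i,j=0}^7 \mathscr{X}^i\mathscr{X}^j h\,\mathcal{D}(\mathscr{X}^i,\mathscr{X}^j)
+f\,\Sigma_{j=0}^7\mathscr{X}^j h\, h^{11}\Sigma_{i=0}^7\mathscr{X}^i\big[h^{-11}\mathcal{D}(\mathscr{X}^i,\mathscr{X}^j)\big].
\end{align*}
The third piece is immediate from Lemma \ref{lemma3.3}, using the symmetry $\mathcal{D}(\mathscr{X}^i,\mathscr{X}^j)=\mathcal{D}(\mathscr{X}^j,\mathscr{X}^i)$ from (\ref{3.3}). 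It gives exactly $\Sigma_i f\mathscr{X}^ih[-\frac32 D(\mathscr{X}^i)+3B(\mathscr{X}^i)]$.

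For the first piece, note that $\Sigma_j\mathcal{D}(\mathscr{X}^i,\mathscr{X}^j)\mathscr{X}^jh=hD(\mathscr{X}^i)$ by definition of $D$. So it equals $\Sigma_i h D(\mathscr{X}^i)\mathscr{X}^if$. Substituting (\ref{3.13}) from Lemma \ref{lemma3.2}, namely $\mathscr{X}^if=\frac12 hD(\mathscr{X}^i)-\Sigma_\nu h^{-1}h_{t^\nu}I_\nu\mathscr{X}^ih+\frac12 h^{-1}f\mathscr{X}^ih$, produces
\[
\Sigma_i D(\mathscr{X}^i)\Big[\tfrac12 h^2D(\mathscr{X}^i)-\Sigma_\nu h_{t^\nu}I_\nu\mathscr{X}^ih+\tfrac12 f\mathscr{X}^ih\Big],
\]
which is the middle group of terms in the claim.

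The remaining task, which is the main point, is to show
\[
\Sigma_{i,j}\mathscr{X}^i\mathscr{X}^jh\,\mathcal{D}(\mathscr{X}^i,\mathscr{X}^j)=h\mathcal{D}^2 .
\]
I would write $\mathscr{X}^i\mathscr{X}^jh$ via $8h\mathcal{D}$ plus the other ingredients of $\mathcal{D}$:
\[
\mathscr{X}^i\mathscr{X}^j h=\tfrac18\Big[8h\mathcal{D}(\mathscr{X}^j,\mathscr{X}^i)+\Sigma_\mu (I_\mu\mathscr{X}^j)(I_\mu\mathscr{X}^i)h+24\Sigma_\mu h_{t^\mu}\omega_\mu(\mathscr{X}^i,\mathscr{X}^j)+\mathscr{X}^i\mathscr{X}^jh\Big]-\dots
\]
More cleanly, I would use the relation $\mathcal{E}=0$ to replace $\Sigma_\mu(I_\mu\mathscr{X}^j)(I_\mu\mathscr{X}^i)h$ by $-\mathscr{X}^j\mathscr{X}^ih+\Delta_Gh\,\delta_{ij}-8\Sigma_\mu h_{t^\mu}\omega_\mu(\mathscr{X}^i,\mathscr{X}^j)$. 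Inserting this into $\mathcal{D}$ gives
\[
h\mathcal{D}(\mathscr{X}^i,\mathscr{X}^j)=\mathscr{X}^j\mathscr{X}^ih-\tfrac18\Delta_Gh\,\delta_{ij}-2\Sigma_\mu h_{t^\mu}\omega_\mu(\mathscr{X}^i,\mathscr{X}^j).
\]
Contracting with $\mathcal{D}(\mathscr{X}^i,\mathscr{X}^j)$ then yields the claim, as follows. The $\delta_{ij}$ term vanishes by the trace-free property in (\ref{3.3}). The $\omega_\mu$ term vanishes because $\mathcal{D}$ is symmetric while $\omega_\mu$ is antisymmetric. For the same reason, $\mathscr{X}^j\mathscr{X}^ih$ may be replaced by $\mathscr{X}^i\mathscr{X}^jh$, since their difference is a commutator, i.e.\ a combination of $h_{t^\mu}\omega_\mu$ by Lemma \ref{lemma2.1}.

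The main obstacle I expect is bookkeeping of signs and constants in the commutator step, given the nonassociative table for the $I_\nu$. The symmetry/antisymmetry argument avoids computing these explicitly, so I would rely on it rather than expanding the octonionic products.
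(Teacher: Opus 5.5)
Your proof is correct. The decomposition is the paper's own. You use the same three-term Leibniz split, treat the first piece via $\Sigma_j\mathcal{D}(\mathscr{X}^i,\mathscr{X}^j)\mathscr{X}^jh=hD(\mathscr{X}^i)$ and Lemma \ref{lemma3.2}, and treat the third via Lemma \ref{lemma3.3} and the symmetry of $\mathcal{D}$. The difference lies only in how you obtain $\Sigma_{i,j}\mathscr{X}^i\mathscr{X}^jh\,\mathcal{D}(\mathscr{X}^i,\mathscr{X}^j)=h\mathcal{D}^2$.

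The paper expands $h\mathcal{D}^2$ using the definition of $\mathcal{D}$. It then kills $\Sigma_{i,j}\big[\mathscr{X}^i\mathscr{X}^jh+\Sigma_\nu(I_\nu\mathscr{X}^i)(I_\nu\mathscr{X}^j)h\big]\mathcal{D}(\mathscr{X}^i,\mathscr{X}^j)$ by reindexing $\mathscr{X}^i\mapsto I_\nu\mathscr{X}^i$ (a signed permutation with $I_\nu I_\nu=-1$) and invoking (\ref{3.4}).

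You instead use the Hessian identity to identify $h\mathcal{D}(\mathscr{X}^i,\mathscr{X}^j)=\frac12(\mathscr{X}^i\mathscr{X}^j+\mathscr{X}^j\mathscr{X}^i)h-\frac18\Delta_Gh\,\delta_{ij}$. Your term $-2\Sigma_\mu h_{t^\mu}\omega_\mu(\mathscr{X}^i,\mathscr{X}^j)$ cancels exactly the antisymmetric part $\frac12[\mathscr{X}^j,\mathscr{X}^i]h$, by Lemma \ref{lemma2.1}. The claim then reduces to orthogonality of the trace-free symmetric part to the trace and antisymmetric parts.

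Your route is more transparent and needs no reindexing through the nonassociative $I_\nu$. It also explains the symmetry and trace-freeness recorded in (\ref{3.3}). The paper's version uses only (\ref{3.4}), which it needs anyway (through (\ref{3.7})) for $O_2$. The two are closely linked. For symmetric $S$ one has the identity $S(\mathscr{X}^i,\mathscr{X}^j)+\Sigma_\nu S(I_\nu\mathscr{X}^i,I_\nu\mathscr{X}^j)=(\mathrm{tr}\,S)\delta_{ij}$. Applied to $S=\mathcal{D}$, this is (\ref{3.4}), and the same identity is what kills the symmetric part of the Hessian bracket in $\mathcal{E}$.

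One point of precision. What you actually use is that the Hessian bracket of $\mathcal{E}$ vanishes by itself, which is stronger than $\mathcal{E}=0$. This is legitimate because the gradient bracket $\mathscr{X}^jh\,\mathscr{X}^ih+\Sigma_\nu(I_\nu\mathscr{X}^j)h\,(I_\nu\mathscr{X}^i)h-|\nabla_Gh|^2\delta_{ij}$ vanishes identically. The paper also states the Hessian identity separately in the proof of Lemma \ref{lemma 3.1}, though there with $\mathscr{X}^i\mathscr{X}^jh$ in the first slot. Your ordering $\mathscr{X}^j\mathscr{X}^ih$, as in the definition of $\mathcal{E}$, is the one consistent with Lemma \ref{lemma2.1}, as a direct check at $(i,j)=(0,1)$ shows.
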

\begin{proof}
We use Lemma \ref{lemma3.2} and Lemma \ref{lemma3.3} with
    \begin{equation*}
    \begin{aligned}
&h^{11}\Sigma_{i,j=0}^7\mathscr{X}^i\bigg[h^{-11}f\mathscr{X}^j h\mathcal{D}(\mathscr{X}^i,\mathscr{X}^j)\bigg]\\=&h^{11}\Sigma_{i,j=0}^7     f\mathscr{X}^j h\mathscr{X}^i\bigg[h^{-11}\mathcal{D}(\mathscr{X}^i,\mathscr{X}^j)\bigg]+f\Sigma_{i,j=0}^7\mathscr{X}^i\mathscr{X}^j h \mathcal{D}(\mathscr{X}^i,\mathscr{X}^j)+\Sigma_{i=0}^7h\mathscr{X}^if D(\mathscr{X}^i).
\end{aligned}
    \end{equation*}
    It suffices to prove
\begin{align*}
    &h^{-1}\Sigma_{i,j=0}^7\mathscr{X}^i\mathscr{X}^j h \mathcal{D}(\mathscr{X}^i,\mathscr{X}^j)\\=&\frac{h^{-1}}{8}\Sigma_{i,j=0}^7\bigg[7\mathscr{X}^i\mathscr{X}^j h-\Sigma_{\nu=1}^7(I_\nu \mathscr{X}^i )(I_\nu \mathscr{X}^j)h-24\Sigma_{\nu=1}^7h_{t^\nu}\omega_\nu (\mathscr{X}^j,\mathscr{X}^i) \bigg]\mathcal{D}(\mathscr{X}^i,\mathscr{X}^j)    \\=&\mathcal{D}^2.
    \end{align*}    
  It follows from
  \begin{align*}
       &\frac{1}{8}h^{-1}\Sigma_{i,j=0}^7\bigg[\mathscr{X}^i\mathscr{X}^j h+
       \Sigma_{\nu=1}^7(I_\nu \mathscr{X}^i )(I_\nu \mathscr{X}^j)h       
       \bigg]\mathcal{D}(\mathscr{X}^i,\mathscr{X}^j)\\=&\frac{1}{8}h^{-1}\Sigma_{i,j=0}^7 \mathscr{X}^i\mathscr{X}^jh\bigg[\mathcal{D}(\mathscr{X}^i,\mathscr{X}^j)+\Sigma_{\nu=1}^7\mathcal{D}(I_\nu \mathscr{X}^i,I_\nu \mathscr{X}^j)    \bigg]\\=&0       
       \end{align*}
       and
       \begin{align*}
\Sigma_{\nu=1}^7\Sigma_{i,j=0}^7\mathcal{D}(\mathscr{X}^i,\mathscr{X}^j)\omega_\nu(\mathscr{X}^j,\mathscr{X}^i)h_{t^\nu}=0.
\end{align*}

\end{proof}

\begin{proposition}
    \begin{equation}\label{3.18}
    \begin{aligned}
O_2=&h^{11}\Sigma_{\nu=1}^7\Sigma_{i,j=0}^7I_\nu \mathscr{X}^i\bigg[h^{-11}h_{t^\nu}\mathscr{X}^j h\mathcal{D}(I_\nu \mathscr{X}^i,I_\nu \mathscr{X}^j)\bigg]\\=&\Sigma_{i=0}^7\bigg[\Sigma_{\nu=1}^7\frac{1}{2}h^2F_\nu(\mathscr{X}^i)B_\nu (\mathscr{X}^i)-\frac{1}{2}\Sigma_{\mu=1}^7h_{t^\mu} D(\mathscr{X}^i) (I_\mu\mathscr{X}^i)h +\frac{1}{4}fD(\mathscr{X}^i)\mathscr{X}^i h \bigg]\\&
-\Sigma_{\nu=1}^7\Sigma_{i=0}^7h_{t^\nu}I_\nu\mathscr{X}^i h\bigg[\frac{-3D( \mathscr{X}^i)}{2}+3B(\mathscr{X}^i)\bigg]\end{aligned}
    \end{equation}
\end{proposition}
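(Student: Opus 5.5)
The plan is to mirror the proof of the previous proposition for $O_1$: expand $O_2$ with the Leibniz rule into four groups of terms and identify each one.

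Expanding gives
\begin{align*}
O_2=&\Sigma_{\nu=1}^7\Sigma_{i,j=0}^7 h_{t^\nu}\mathscr{X}^jh\, h^{11}I_\nu\mathscr{X}^i\big[h^{-11}\mathcal{D}(I_\nu\mathscr{X}^i,I_\nu\mathscr{X}^j)\big]
+\Sigma_{\nu=1}^7\Sigma_{i,j=0}^7 h_{t^\nu}(I_\nu\mathscr{X}^i)\mathscr{X}^jh\,\mathcal{D}(I_\nu\mathscr{X}^i,I_\nu\mathscr{X}^j)\\
&+\Sigma_{\nu=1}^7\Sigma_{j=0}^7 h\,\mathscr{X}^jh\Big[h^{-1}\Sigma_{i=0}^7(I_\nu\mathscr{X}^i h_{t^\nu})\mathcal{D}(I_\nu\mathscr{X}^i,I_\nu\mathscr{X}^j)\Big].
\end{align*}
The middle sum vanishes for each fixed $\nu$ by Lemma \ref{lemma 3.1}, equation (\ref{3.9}). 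This is the analogue of the $\mathcal{D}^2$ term in $O_1$, which here disappears.

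For the first sum, I will use that $\{I_\nu\mathscr{X}^i\}_{i}$ is, up to signs, a permutation of $\{\mathscr{X}^i\}$. So for fixed $\nu$, $\Sigma_i I_\nu\mathscr{X}^i[h^{-11}\mathcal{D}(I_\nu\mathscr{X}^i,\cdot)]=\Sigma_i\mathscr{X}^i[h^{-11}\mathcal{D}(\mathscr{X}^i,\cdot)]$, and Lemma \ref{lemma3.3} applies with second slot $I_\nu\mathscr{X}^j$. Note that $\mathcal{D}$ is evaluated at $I_\nu\mathscr{X}^j$, which is again $\pm\mathscr{X}^k$ for some $k$. Thus the first sum equals
\[
\Sigma_\nu\Sigma_j h_{t^\nu}\mathscr{X}^jh\Big[-\tfrac32 D(I_\nu\mathscr{X}^j)+3B(I_\nu\mathscr{X}^j)\Big].
\]
Reindexing $j$ through the signed permutation $I_\nu$ (using $I_\nu I_\nu\mathscr{X}^k=-\mathscr{X}^k$) turns $\mathscr{X}^jh\,D(I_\nu\mathscr{X}^j)$ into $-I_\nu\mathscr{X}^k h\,D(\mathscr{X}^k)$. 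This produces exactly the last line $-\Sigma_\nu\Sigma_i h_{t^\nu}I_\nu\mathscr{X}^ih[-\tfrac32D(\mathscr{X}^i)+3B(\mathscr{X}^i)]$ of (\ref{3.18}).

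The third sum is $\Sigma_{\nu,j} h\,\mathscr{X}^jh\,h^{-1}\Sigma_i\mathcal{D}(I_\nu\mathscr{X}^j,I_\nu\mathscr{X}^i)I_\nu\mathscr{X}^ih_{t^\nu}$. I will rewrite $2h^{-1}I_\nu\mathscr{X}^ih_{t^\nu}$ via (\ref{3.14}) as
\[
B_\nu(\mathscr{X}^i)+\Sigma_\mu h^{-2}h_{t^\mu}I_\mu\mathscr{X}^ih-\tfrac12h^{-2}f\mathscr{X}^ih.
\]
Contracting with $\mathscr{X}^jh\,\mathcal{D}(I_\nu\mathscr{X}^j,I_\nu\mathscr{X}^i)$ and using the definition of $F_\nu$ together with symmetry of $\mathcal{D}$, the $B_\nu$ part gives $\tfrac12h^2\Sigma_{\nu,i}F_\nu(\mathscr{X}^i)B_\nu(\mathscr{X}^i)$. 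The $f\mathscr{X}^ih$ part gives $-\tfrac14 f\Sigma_i\Sigma_\nu F_\nu(\mathscr{X}^i)\mathscr{X}^ih=+\tfrac14 fD(\mathscr{X}^i)\mathscr{X}^ih$ by (\ref{3.7}). The $h_{t^\mu}I_\mu\mathscr{X}^ih$ part gives $\tfrac12\Sigma_\mu h_{t^\mu}\Sigma_\nu F_\nu(\mathscr{X}^i)I_\mu\mathscr{X}^ih=-\tfrac12\Sigma_\mu h_{t^\mu}D(\mathscr{X}^i)I_\mu\mathscr{X}^ih$, again by (\ref{3.7}). These are precisely the bracketed terms in (\ref{3.18}).

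The main obstacle is the bookkeeping of signs and powers of $h$ in the reindexings. One must check that the $h^{-1}$ inside $F_\nu$ combines correctly with the $h^{-2}$ in $B_\nu$, giving the overall $\tfrac12h^2$ factor. Because of nonassociativity, the reindexing $i\mapsto I_\nu\mathscr{X}^i$ must only use the single-operator permutation property and $I_\nu I_\nu=-1$ from Lemma \ref{lemma2.2}, never products $I_\mu I_\nu$. I would verify the first sum on one explicit pair, for instance $\nu=1$ with $j=0$, to fix the sign.
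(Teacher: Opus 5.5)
Your proposal is correct and follows the paper's own argument step by step. You expand $O_2$ by the Leibniz rule and kill the middle sum with Lemma 3.1. You treat the first sum with Lemma 3.3 after reindexing through the signed permutation $I_\nu$ (using $I_\nu I_\nu\mathscr{X}^k=-\mathscr{X}^k$). For the third sum you solve (3.14) for $I_\nu\mathscr{X}^i h_{t^\nu}$, contract to $hF_\nu(\mathscr{X}^i)$, and collapse $\Sigma_\nu F_\nu=-D$ via (3.7). The paper compresses exactly these steps into one display, so your version just makes the sign bookkeeping explicit.
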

\begin{proof}
Using Lemma \ref{3.1}, Lemma \ref{3.3} and (\ref{3.7}), we have
     \begin{align*}
&h^{11}\Sigma_{\nu=1}^7\Sigma_{i,j=0}^7I_\nu \mathscr{X}^i\bigg[h^{-11}h_{t^\nu}\mathscr{X}^j h\mathcal{D}(I_\nu \mathscr{X}^i,I_\nu \mathscr{X}^j)\bigg]\\=&\Sigma_{\nu=1}^7\Sigma_{i=0}^7hF_\nu(\mathscr{X}^i) \bigg[\frac{1}{2} h B_\nu (\mathscr{X}^i)+\frac{1}{2}\Sigma_{\mu=1}^7 h^{-1}h_{t^\mu} (I_\mu\mathscr{X}^i)h -\frac{1}{4}h^{-1}f\mathscr{X}^i h \bigg]\\&+\Sigma_{\nu=1}^7\Sigma_{j=0}^7h_{t^\nu}\mathscr{X}^j h\bigg[\frac{-3D(I_\nu \mathscr{X}^j)}{2}+3B(I_\nu\mathscr{X}^j)\bigg] \\=&\Sigma_{i=0}^7\bigg[\Sigma_{\nu=1}^7\frac{1}{2}h^2F_\nu(\mathscr{X}^i)B_\nu (\mathscr{X}^i)-\frac{1}{2}\Sigma_{\mu=1}^7 h_{t^\mu}D(\mathscr{X}^i) (I_\mu\mathscr{X}^i)h +\frac{1}{4}fD(\mathscr{X}^i)\mathscr{X}^i h \bigg]\\&
-\Sigma_{\nu=1}^7\Sigma_{i=0}^7h_{t^\nu}I_\nu\mathscr{X}^i h\bigg[\frac{-3D( \mathscr{X}^i)}{2}+3B(\mathscr{X}^i)\bigg]. 
\end{align*}

\end{proof}

\begin{proposition}
\begin{equation}\label{3.19}
    \begin{aligned}
       O_3=& h^{11}\Sigma_{\nu=1}^7\Sigma_{i=0}^7I_\nu\mathscr{X}^i \bigg[h^{-10}B_\nu(\mathscr{X}^i)h_{t^\nu}\bigg]   \\=&\frac{1}{2}h^2\Sigma_{\nu=1}^7B_\nu^2 +\Sigma_{i=0}^7\bigg[-\frac{1}{4}\Sigma
_{\nu=1}^7D(\mathscr{X}^i)I_\nu\mathscr{X}^i h h_{t^\nu}+\Sigma_{\nu=1}^7B(\mathscr{X}^i)I_\nu{\mathscr{X}^i}hh_{t^\nu}\\&-\frac{1}{4}fB(\mathscr{X}^i)\mathscr{X}^i h\bigg]
       \end{aligned} 
       \end{equation}\end{proposition}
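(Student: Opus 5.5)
Here $B_\nu^2$ is read as $\Sigma_{i=0}^7 B_\nu(\mathscr{X}^i)^2$. The plan is to expand $O_3$ by the Leibniz rule and then rewrite every second-order term through the definition (\ref{3.14}) of $B_\nu$:
\begin{align*}
O_3=\Sigma_{\nu=1}^7\Sigma_{i=0}^7\Big[-10\, I_\nu\mathscr{X}^i h\, B_\nu(\mathscr{X}^i)h_{t^\nu}+h\, I_\nu\mathscr{X}^i\big(B_\nu(\mathscr{X}^i)\big)h_{t^\nu}+h\,B_\nu(\mathscr{X}^i)\, I_\nu\mathscr{X}^i h_{t^\nu}\Big].
\end{align*}

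\textbf{Third term.} By (\ref{3.14}),
\begin{align*}
h\, I_\nu\mathscr{X}^i h_{t^\nu}=\frac{h^2}{2}\Big[B_\nu(\mathscr{X}^i)+\Sigma_{\mu=1}^7 h^{-2}h_{t^\mu}(I_\mu\mathscr{X}^i)h-\frac12 h^{-2}f\mathscr{X}^ih\Big].
\end{align*}
Multiplying by $B_\nu(\mathscr{X}^i)$ and summing over $\nu$ gives $\frac12 h^2\Sigma_\nu B_\nu^2$. It also gives cross terms $\frac12 B(\mathscr{X}^i)\Sigma_\mu h_{t^\mu}I_\mu\mathscr{X}^ih-\frac14 fB(\mathscr{X}^i)\mathscr{X}^ih$. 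The second of these is already the last term of (\ref{3.19}).

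\textbf{Second term: divergence of $B_\nu$.} Contract $h_{t^\nu}I_\nu\mathscr{X}^i$ against each piece of $B_\nu$.
\begin{itemize}
\item The piece $2h^{-1}I_\nu\mathscr{X}^i h_{t^\nu}$ produces $\Sigma_i (I_\nu\mathscr{X}^i)^2 h_{t^\nu}=\Delta_G h_{t^\nu}$. Since $\partial_{t^\nu}$ commutes with the $\mathscr{X}^i$, (\ref{3.1}) converts this into $\partial_{t^\nu}(6h^{-1}|\nabla_G h|^2)=24\,\partial_{t^\nu}f$ (recall $\Delta_G h=24f-64$).
\item The piece $\frac12 h^{-2}f\mathscr{X}^ih$ produces $\Sigma_i I_\nu\mathscr{X}^i\mathscr{X}^ih$. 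By Lemma~\ref{lemma2.1}, $\Sigma_i I_\nu\mathscr{X}^i\mathscr{X}^i=16\partial_{t^\nu}$. It also produces the derivative $I_\nu\mathscr{X}^i f$, which I express via Lemma~\ref{lemma3.2} as a combination of $D$, $h_{t^\mu}I_\mu\mathscr{X}^ih$ and $f\mathscr{X}^ih$.
\item The piece $-\Sigma_\mu h^{-2}h_{t^\mu}I_\mu\mathscr{X}^ih$ produces three kinds of terms:
 \begin{itemize}
 \item for $\mu=\nu$, the term $\Sigma_i(I_\nu\mathscr{X}^i)^2h=\Delta_G h$;
 \item for $\mu\neq\nu$, terms $\Sigma_i I_\nu\mathscr{X}^iI_\mu\mathscr{X}^ih$, which cancel in symmetric pairs by the third identity of Lemma~\ref{lemma2.3} once paired with $h_{t^\nu}h_{t^\mu}$;
 \item derivatives $I_\nu\mathscr{X}^ih_{t^\mu}$, which I rewrite again through (\ref{3.14}) in terms of $B_\mu(I_\nu\mathscr{X}^i)$.
 \end{itemize}
\end{itemize}
Wherever Lemma~\ref{lemma2.3} applies, I use it to kill mixed sums $\Sigma_i I_\mu\mathscr{X}^ihI_\nu\mathscr{X}^ih$ with $\mu\ne\nu$. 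I use Lemma~\ref{lemma2.2} to move $I_\nu$ through expressions like $I_\mu I_\nu\mathscr{X}^i$. This step is forced on us because the octonions are non-associative, so products of $I$'s cannot simply be regrouped. I then group everything into the right-hand side terms: $-\frac14 D(\mathscr{X}^i)I_\nu\mathscr{X}^ih\,h_{t^\nu}$ (the $D$-contribution coming from Lemma~\ref{lemma3.2}), $B(\mathscr{X}^i)I_\nu\mathscr{X}^ih\,h_{t^\nu}$, and $-\frac14 fB\mathscr{X}^ih$. All remaining pure terms, in $h_{t}$, $f$ and $|\nabla_G h|^2$, must then cancel exactly. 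Their sources are the first Leibniz term (the $-10$ weight) and the lower-order parts just listed.

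\textbf{Main obstacle.} The hardest step is the last one: showing that these pure terms and the $\partial_{t^\nu}f$ terms cancel, with the exponent $h^{-10}$ being precisely what makes this happen. To verify it, I would write $\partial_{t^\nu}f$ and $\Sigma_i I_\nu\mathscr{X}^i h\,I_\nu\mathscr{X}^ih_{t^\nu}$ in terms of $B_\nu$. I would then also substitute $E=0$ to re-express $\Sigma_j\mathscr{X}^jh\,\mathscr{X}^j\mathscr{X}^ih$, and check that the coefficients collapse. If an unexpected leftover such as $\Sigma_\nu h_{t^\nu}^2$ appeared, I would first recheck the sign convention $\Sigma_i I_\nu\mathscr{X}^i\mathscr{X}^i=16\partial_{t^\nu}$, taken against $\Sigma_i\mathscr{X}^i I_\nu\mathscr{X}^i=-16\partial_{t^\nu}$ from Lemma~\ref{lemma3.3}.
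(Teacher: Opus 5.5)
Your route is the paper's: Leibniz expansion with weight $-10$, substitution of (\ref{3.14}), $\Delta_G h_{t^\nu}=24f_{t^\nu}$ from (\ref{3.1}), $\Sigma_i I_\nu\mathscr{X}^i\mathscr{X}^i=16\partial_{t^\nu}$, Lemma~\ref{lemma3.2} and Lemma~\ref{lemma2.3}. As written, however, it is an outline in which the two decisive steps are misstated or not carried out.

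The first is the $h_{t^\mu}$-derivative terms. You cannot rewrite $I_\nu\mathscr{X}^ih_{t^\mu}$ through $B_\mu(I_\nu\mathscr{X}^i)$. By (\ref{3.14}) that quantity contains $I_\mu I_\nu\mathscr{X}^ih_{t^\mu}$, a derivative in a different direction; for $\mu=\nu$ it is $-\mathscr{X}^ih_{t^\nu}$. For $\mu=\nu$ you use $B_\nu(\mathscr{X}^i)$ directly. For $\mu\neq\nu$ you need the reindexing $\mathscr{X}^i\mapsto-I_\nu I_\mu\mathscr{X}^i$. Lemma~\ref{lemma2.2} gives $I_\nu(-I_\nu I_\mu\mathscr{X}^i)=I_\mu\mathscr{X}^i$ and $I_\mu(-I_\nu I_\mu\mathscr{X}^i)=-I_\nu\mathscr{X}^i$, hence
\begin{align*}
\Sigma_{i=0}^7 I_\nu\mathscr{X}^i h_{t^\mu}\, I_\mu\mathscr{X}^i h&=-\Sigma_{i=0}^7 I_\mu\mathscr{X}^i h_{t^\mu}\, I_\nu\mathscr{X}^i h\\
&=-\frac{h}{2}\Sigma_{i=0}^7B_\mu(\mathscr{X}^i)\,I_\nu\mathscr{X}^ih-\frac12h^{-1}h_{t^\nu}|\nabla_G h|^2,
\end{align*}
where the last step uses (\ref{3.14}) and Lemma~\ref{lemma2.3}. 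This minus sign is the genuinely non-associative input. With the opposite sign, the coefficient of $B(\mathscr{X}^i)$ becomes $0$ and the remaining terms no longer cancel.

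The same issue arises with Lemma~\ref{lemma3.2} applied to $I_\nu\mathscr{X}^i$. It yields $D(I_\nu\mathscr{X}^i)$ and $(I_\mu I_\nu\mathscr{X}^i)h$, not $I_\mu\mathscr{X}^ih$. After reindexing $\mathscr{X}^i\mapsto I_\nu\mathscr{X}^i$ you get $\Sigma_iD(I_\nu\mathscr{X}^i)\mathscr{X}^ih=-\Sigma_iD(\mathscr{X}^i)I_\nu\mathscr{X}^ih$ and $\Sigma_i(I_\mu I_\nu\mathscr{X}^ih)\mathscr{X}^ih=-\delta_{\mu\nu}|\nabla_Gh|^2$. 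This is where the $-\frac14 D$ term comes from.

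The second is the cancellation you postpone, which is the whole content of the proposition; you also partly misplace its mechanism. Use (\ref{ft}), $f_{t^\nu}=\frac14\Sigma_iB_\nu(\mathscr{X}^i)I_\nu\mathscr{X}^ih$, and set $S_1=\Sigma_{\nu,i}h_{t^\nu}B_\nu(\mathscr{X}^i)I_\nu\mathscr{X}^ih$, $S_2=\Sigma_{\nu,i}h_{t^\nu}B(\mathscr{X}^i)I_\nu\mathscr{X}^ih$, $P=\Sigma_\nu h_{t^\nu}^2$.

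The coefficient of $S_1$ collects four contributions, which sum to zero:
$-10$ from the weight;
$+12$ from $2h_{t^\nu}\Delta_Gh_{t^\nu}=48h_{t^\nu}f_{t^\nu}$;
$-1$ from differentiating the powers of $h$ in $B_\nu$;
$-1$ from the $h_{t^\mu}$-derivative terms, which together give $\frac12S_2-S_1+\frac52h^{-2}|\nabla_Gh|^2P$.

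The coefficient of $S_2$ is $\frac12$ from the last Leibniz term plus $\frac12$ from the $h_{t^\mu}$-derivatives, giving $1$.

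The coefficient of $P$ collects $(1+\frac52+\frac12)h^{-2}|\nabla_Gh|^2$ from the powers of $h$, the $h_{t^\mu}$-derivatives and the $f$-derivative. It also collects $-h^{-1}(24f-64)$ from the $I_\nu\mathscr{X}^iI_\mu\mathscr{X}^ih$ terms, where only $\mu=\nu$ survives and gives $\Delta_Gh$. Finally it collects $8h^{-1}f$ from $\Sigma_iI_\nu\mathscr{X}^i\mathscr{X}^ih=16h_{t^\nu}$. Altogether
\begin{equation*}
\Big(1+\frac52+\frac12\Big)h^{-2}|\nabla_Gh|^2-h^{-1}(24f-64)+8h^{-1}f=16h^{-1}(f-4)-16h^{-1}f+64h^{-1}=0,
\end{equation*}
using $|\nabla_Gh|^2=4h(f-4)$.

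So the weight $-10$ is what eliminates $S_1$, i.e.\ the $\partial_{t^\nu}f$ contribution. The pure terms cancel for any exponent, and no use of $E=0$ beyond Lemma~\ref{lemma3.2} is needed. With these two points filled in, your outline becomes the paper's proof.
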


\begin{proof}
From the definition of (\ref{3.14}), we have
\begin{equation}
    \begin{aligned}
    &\Sigma_{\nu=1}^7\Sigma_{i=0}^7h^{11}I_\nu\mathscr{X}^i \bigg[h^{-10}B_\nu(\mathscr{X}^i)h_{t^\nu}\bigg]\\=&-12\Sigma_{\nu=1}^7\Sigma_{i=0}^7h_{t^\nu}B_\nu(\mathscr{X}^i)I_\nu\mathscr{X}^i h+\Sigma_{\nu=1}^7\Sigma_{i=0}^7 hI_\nu\mathscr{X}^i h_{t^\nu}B_\nu(\mathscr{X}^i)\\&+h^{-1}\Sigma_{\nu=1}^7h_{t^\nu}\Sigma_{i=0}^7\bigg[2I_\nu\mathscr{X}^i h I_\nu\mathscr{X}^i h_{t^\nu}+2h I_\nu\mathscr{X}^i I_\nu\mathscr{X}^i h_{t^\nu}-\Sigma_{\mu=1}^7I_\nu\mathscr{X}^ih_{t^\mu}I_\mu\mathscr{X}^i h\\&-\Sigma_{\mu=1}^7h_{t^\mu}I_\nu\mathscr{X}^iI_\mu\mathscr{X}^i h+\frac{1}{2}I_\nu\mathscr{X}^i f\mathscr{X}^i h+\frac{1}{2}fI_\nu\mathscr{X}^i\mathscr{X}^i h\bigg].
\end{aligned}\end{equation}
First, we should be careful here since the octonionic multiplication is not associative, we have
\begin{equation}
    \begin{aligned}
&h^{-1}\Sigma_{\nu=1}^7h_{t^\nu}\Sigma_{i=0}^7\bigg[2I_\nu\mathscr{X}^i h I_\nu\mathscr{X}^i h_{t^\nu}-\Sigma_{\mu=1}^7I_\nu\mathscr{X}^i h_{t^\mu}I_\mu\mathscr{X}^i h\bigg]\\=&h^{-1}\Sigma_{\nu=1}^7h_{t^{\nu}}\Sigma_{i=0}^7\bigg[I_\nu\mathscr{X}^i h I_\nu\mathscr{X}^i h_{t^\nu}-\Sigma_{\mu\neq \nu,\mu=1}^7I_\nu\mathscr{X}^i h_{t^\mu}I_\mu\mathscr{X}^i h \bigg]\\=&h^{-1}\Sigma_{\nu=1}^7h_{t^\nu}\Sigma_{i=0}^7 \Sigma_{\mu=1}^7I_{\mu}\mathscr{X}^i h_{t^\mu}I_\nu\mathscr{X}^i h\\=&\frac{1}{2}\Sigma_{\nu=1}^7h_{t^\nu}\Sigma_{i=0}^7\Sigma_{\mu=1}^7 B_\mu(\mathscr{X}^i)I_\nu\mathscr{X}^i h+\frac{7}{2}\Sigma_{\nu=1}^7h^{-2}h_{t^\nu}^2|\nabla_G h|^2.
 \end{aligned}
\end{equation}
The second equality holds by Lemma \ref{lemma2.2}, for $\nu\neq \mu$,
\begin{align*}
    I_\nu(-I_\nu I_\mu\mathscr{X}^i)=I_\mu \mathscr{X}^i, I_\mu(-I_\nu I_\mu\mathscr{X}^i)=-I_\nu\mathscr{X}^i,
\end{align*}
then we replace $\mathscr{X}^i$ by $-I_\nu I_\mu \mathscr{X}^i$ to get
\begin{align*}
\Sigma_{i=0}^7I_\nu\mathscr{X}^i h_{t^\mu}I_\mu\mathscr{X}^i h=\Sigma_{i=0}^7 I_\nu(-I_\nu I_\mu\mathscr{X}^i) h_{t^\mu}I_\mu(-I_\nu I_\mu\mathscr{X}^i) h=-\Sigma_{i=0}^7I_\mu \mathscr{X}^i h_{t^\mu}I_\nu \mathscr{X}^i h.
\end{align*}
The last equality is given by
\begin{align*}
    &\Sigma_{i=0}^7I_\mu\mathscr{X}^i h_{t^\mu}I_\nu\mathscr{X}^ih\\=&\Sigma_{i=0}^7\bigg[\frac{1}{2}hB_\mu(\mathscr{X}^i)I_\nu\mathscr{X}^i h+\frac{1}{2}\Sigma_{\tau=1}^7h^{-1}h_{t^\tau}I_\tau \mathscr{X}^i hI_\nu\mathscr{X}^i h-\frac{1}{4}h^{-1}f\mathscr{X}^i h I_\nu\mathscr{X}^i h\bigg]\\=&\Sigma_{i=0}^7\bigg[\frac{1}{2}hB_\mu(\mathscr{X}^i)I_\nu\mathscr{X}^i h+\frac{1}{2}h^{-1}h_{t^\nu}I_\nu \mathscr{X}^i hI_\nu\mathscr{X}^i h\bigg],\end{align*}
since from Lemma \ref{lemma2.3}, $\Sigma_{i=0}^7I_\nu \mathscr{X}^i h I_\mu \mathscr{X}^i h=0$, for $\nu\neq \mu$ and $\Sigma_{i=0}^7I_\nu\mathscr{X}^i h\mathscr{X}^i h=0.$

Next, we have
\begin{align*}
    \frac{1}{2}\Sigma_{i=0}^7fI_\nu\mathscr{X}^i\mathscr{X}^i h=8fh_{t^\nu},
\end{align*}
and
\begin{align*}
    2h\Delta_G h_{t^\nu}=48 h f_{t^\nu}=12h\Sigma_{i=0}^7B_\nu(\mathscr{X}^i)I_\nu\mathscr{X}^i h,\end{align*}
    since
\begin{align}\label{ft}
    f_{t^\nu}=-\frac{1}{4}h^{-2}h_{t^\nu}|\nabla_G h|^2+\frac{1}{2}h^{-1}\Sigma_{i=0}^7  I_\nu\mathscr{X}^i h I_\nu\mathscr{X}^i h_{t^\nu}=\frac{1}{4}\Sigma_{i=0}^7B_\nu(\mathscr{X}^i)I_\nu\mathscr{X}^i h.
    \end{align}
We also have
\begin{align*}
    -h^{-1}\Sigma_{i=0}^7\Sigma_{\nu,\mu=1}^7h_{t^\nu}h_{t^\mu}I_\nu \mathscr{X}^i I_\mu\mathscr{X}^i h=-h^{-1}\Sigma_{\nu=1}^7h^2_{t^\nu}\Delta_G h=h^{-1}\Sigma_{\nu=1}^7\bigg[-24fh_{t^\nu}^2+64h_{t^\nu}^2\bigg],
    \end{align*}
this is because for $\nu\neq \mu$, by Lemma \ref{lemma2.3}, $\Sigma_{i=0}^7(I_\nu \mathscr{X}^iI_\mu \mathscr{X}^i h+I_\mu\mathscr{X}^iI_\nu\mathscr{X}^ih)=0.$

    Using Lemma \ref{lemma3.2}, we have
\begin{align*}
   & \frac{1}{2}h^{-1}\Sigma_{\nu=1}^7h_{t^\nu}\Sigma_{i=0}^7I_\nu\mathscr{X}^i f\mathscr{X}^i h\\=&h^{-1}\Sigma_{\nu=1}^7h_{t^\nu}\bigg[\frac{1}{4}\Sigma_{i=0}^7hD(I_\nu\mathscr{X}^i)\mathscr{X}^i h +\frac{1}{2}h^{-1}h_{t^\nu}|\nabla_G h|^2\bigg]\\=&h^{-1}\Sigma_{\nu=1}^7h_{t^\nu}\bigg[-\frac{1}{4}\Sigma_{i=0}^7hD(\mathscr{X}^i)I_\nu\mathscr{X}^i h +\frac{1}{2}h^{-1}h_{t^\nu}|\nabla_G h|^2\bigg].\end{align*}
   Notice that
\begin{align*}
    64h^{-1}h^2_{t^\nu}+\frac{1}{2}h^{-2}h^2_{t^\nu}|\nabla_G h|^2=16h^{-1}h^2_{t^\nu}f-\frac{7}{2}h^{-2}h^2_{t^\nu}|\nabla_G h|^2,
    \end{align*}
we arrive at
\begin{align*}
   &h^{-1}\Sigma_{\nu=1}^7h_{t^\nu}\Sigma_{i=0}^7\bigg[2I_\nu\mathscr{X}^i h I_\nu\mathscr{X}^i h_{t^\nu}+2h I_\nu\mathscr{X}^i I_\nu\mathscr{X}^i h_{t^\nu}-\Sigma_{\mu=1}^7I_\nu\mathscr{X}^i h_{t^\mu}I_\mu\mathscr{X}^i_\alpha h\\&-\Sigma_{\mu=1}^7h_{t^\mu}I_\nu\mathscr{X}^i I_\mu\mathscr{X}^i h+\frac{1}{2}I_\nu\mathscr{X}^i f\mathscr{X}^i h+\frac{1}{2}fI_\nu\mathscr{X}^i\mathscr{X}^i h\bigg]\\=&\Sigma_{\nu=1}^7h_{t^\nu}\Sigma_{i=0}^7\bigg[-\frac{1}{4}D(\mathscr{X}^i )I_\nu\mathscr{X}^i h+12B_\nu(\mathscr{X}^i)I_\nu\mathscr{X}^ih+\frac{1}{2}\Sigma_{\mu=1}^7B_\mu(\mathscr{X}^i)I_\nu\mathscr{X}^i h\bigg].
    \end{align*}
Last, we have
\begin{align*}
    &\Sigma_{\nu=1}^7\Sigma_{i=0}^7hI_\nu\mathscr{X}^i h_{t^\nu}B_\nu(\mathscr{X}^i)\\=&\Sigma_{\nu=1}^7\Sigma_{i=0}^7hB_\nu(\mathscr{X}^i)\bigg[\frac{1}{2}hB_\nu(\mathscr{X}^i)+\frac{1}{2}\Sigma_{\mu=1}^7h^{-1}h_{t^\mu}(I_\mu\mathscr{X}^i)h-\frac{1}{4}h^{-1}f\mathscr{X}^i h\bigg]\\=&\frac{1}{2}\Sigma_{\nu=1}^7\Sigma_{i=0}^7h^2B_\nu(\mathscr{X}^i)^2+\Sigma_{i=0}^7\bigg[\frac{1}{2}\Sigma_{\mu=1}^7h_{t^\mu}(I_\mu\mathscr{X}^i)h B(\mathscr{X}^i)-\frac{1}{4}fB(\mathscr{X}^i )\mathscr{X}^i h\bigg].
\end{align*}
Combining these, we complete the proof.
\end{proof}
\begin{theorem}[Jerison-Lee type identity]We have

  \begin{equation}\label{jl}
     \begin{aligned}
        &\frac{1}{3}O_1+\frac{4}{3}O_2+4O_3\\=& \frac{1}{3}fh\mathcal{D}^2+\frac{1}{6}\Sigma_{i=0}^7h^2 D(\mathscr{X}^i)^2+
        \Sigma_{i=0}^7\Sigma_{\nu=1}^7h^2\bigg[\frac{2}{3}F_\nu(\mathscr{X}^i)B_\nu (\mathscr{X}^i)+2 B_\nu(\mathscr{X}^i)^2\bigg]\\\geq & \frac{1}{1000}\bigg[fh\mathcal{D}^2+\Sigma_{i=0}^7\Sigma_{\nu=1}^7h^2B_\nu (\mathscr{X}^i)^2\bigg]
     \end{aligned}
 \end{equation}\end{theorem}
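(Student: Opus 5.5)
The plan is to add the three propositions with weights $\frac13$, $\frac43$ and $4$, verify the cancellations that give the middle expression, and then prove the lower bound by completing a square. The hard step is a sufficiently sharp bound on $\Sigma_\nu\Sigma_i F_\nu(\mathscr{X}^i)^2$ in terms of $\mathcal{D}^2$.

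\textbf{The identity.} From the expressions for $O_1,O_2,O_3$, every term that is not quadratic in $(\mathcal{D},D,F_\nu,B_\nu)$ is one of three types, each multiplied by $D(\mathscr{X}^i)$ or $B(\mathscr{X}^i)$:
\[
f\,\mathscr{X}^i h,\qquad h_{t^\nu}\, I_\nu\mathscr{X}^i h,\qquad h^2 D(\mathscr{X}^i).
\]
I will collect the coefficients one group at a time.
\begin{itemize}
\item $fD(\mathscr{X}^i)\mathscr{X}^ih$: the contributions are $\frac13(\frac12-\frac32)+\frac43\cdot\frac14=0$.
\item $fB(\mathscr{X}^i)\mathscr{X}^ih$: the contributions are $\frac13\cdot 3+4\cdot(-\frac14)=0$.
\item $h_{t^\nu}D(\mathscr{X}^i)I_\nu\mathscr{X}^ih$: the contributions are $\frac13(-1)+\frac43(-\frac12+\frac32)+4(-\frac14)=0$.
\item $h_{t^\nu}B(\mathscr{X}^i)I_\nu\mathscr{X}^ih$: the contributions are $\frac43(-3)+4\cdot 1=0$.
\end{itemize}
What remains is exactly $\frac13 fh\mathcal{D}^2+\frac16h^2\Sigma_i D(\mathscr{X}^i)^2+\Sigma_{i,\nu}h^2\big[\frac23F_\nu B_\nu+2B_\nu^2\big]$. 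This uses $\frac13\cdot\frac12=\frac16$ for the $D^2$ term, $\frac43\cdot\frac12=\frac23$ for the $FB$ term, and $4\cdot\frac12=2$ for the $B_\nu^2$ term.

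\textbf{Positivity.} Complete the square:
\[
2B_\nu^2+\tfrac23F_\nu B_\nu=2\big(B_\nu+\tfrac16F_\nu\big)^2-\tfrac1{18}F_\nu^2 .
\]
Keeping a small fraction of $B_\nu^2$ aside, it then suffices to show
\[
h^2\Sigma_{\nu}\Sigma_i F_\nu(\mathscr{X}^i)^2\le c\Big[fh\mathcal{D}^2+h^2\Sigma_iD(\mathscr{X}^i)^2\Big]
\]
with $c$ strictly below the threshold $6$ forced by the coefficients $\frac13$ and $\frac16$. Any slack left over then yields the constant $\frac1{1000}$.

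\textbf{Bounding $F_\nu$ (main obstacle).} The natural estimate goes as follows. Each matrix $\mathcal{D}_\nu(\mathscr{X}^i,\mathscr{X}^j):=\mathcal{D}(I_\nu\mathscr{X}^i,I_\nu\mathscr{X}^j)$ is an orthogonal conjugate of $\mathcal{D}$, so it is symmetric and trace-free by (\ref{3.3}). Hence
\[
\Sigma_j (hF_\nu(\mathscr{X}^j))^2\le \|\mathcal{D}\|_{op}^2|\nabla_G h|^2\le\tfrac78\,\mathcal{D}^2|\nabla_Gh|^2 ,
\]
where the last step uses the trace-free bound $\lambda_{\max}^2\le\frac78\Sigma\lambda^2$. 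Combined with $hf\ge\frac14|\nabla_Gh|^2$ from (\ref{3.2}), this gives $c=\frac{49}{2}$, which is too weak: it exceeds the threshold $6$ by a factor of about four.

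I would therefore try to exploit the coupling among the $\mathcal{D}_\nu$ rather than bounding each one separately. Relation (\ref{3.4}) says $\mathcal{D}+\Sigma_\nu\mathcal{D}_\nu=0$. Lemma \ref{lemma3.1} and (\ref{3.6}) give orthogonality between $\mathcal{D}$ and the mixed blocks $\mathcal{D}(\mathscr{X}^i,I_\nu\mathscr{X}^j)$. The idea is to expand $\Sigma_\nu|\mathcal{D}_\nu\nabla_Gh|^2$ in the basis $\{\mathscr{X}^ih,\,I_\mu\mathscr{X}^ih\}$ and use Lemma \ref{lemma2.3} to kill cross terms. The hope is that the $\mathcal{D}_\nu$ act on $\nabla_Gh$ with nearly orthogonal images, so the sum over $\nu$ costs at most a bounded multiple of $\mathcal{D}^2|\nabla_Gh|^2$ well below $24$. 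The term $h^2\Sigma_iD(\mathscr{X}^i)^2=|\mathcal{D}\nabla_Gh|^2$ absorbs the component along $\nabla_Gh$ itself.

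If this sharper bound cannot be obtained, I would re-weight. I would also use (\ref{3.7}) to replace $\Sigma_\nu F_\nu$ by $-D$ in the part of the cross term $\Sigma_\nu F_\nu B_\nu$ that is symmetric in $\nu$, which leaves only the $\nu$-antisymmetric part to be controlled.
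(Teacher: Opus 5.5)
Your coefficient bookkeeping for the equality is correct, and it is exactly the cancellation that makes the paper's combination work. Note that the four cancellation conditions force the weights of $(O_1,O_2,O_3)$ to be proportional to $(1,4,12)$, so re-weighting is not an available fallback.

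The genuine gap is the inequality. After completing the square you need
\[
h^2\sum_{\nu=1}^7\sum_{i=0}^7F_\nu(\mathscr{X}^i)^2\le a\,fh\mathcal{D}^2+b\,h^2\sum_{i=0}^7D(\mathscr{X}^i)^2,\qquad a<6,\ b<3 .
\]
The threshold for $b$ is $3$, not $6$, because the coefficient $\frac16$ of the $D$-term must absorb $\frac{b}{18}$. The proposal never proves such a bound. The operator-norm estimate gives only $a=\frac{49}{2}$, and the rest of that paragraph is a hope. The appeal to (\ref{3.7}) does not rescue it either: one has $\sum_\nu F_\nu(\mathscr{X}^i)B_\nu(\mathscr{X}^i)=-\frac17B(\mathscr{X}^i)D(\mathscr{X}^i)+\sum_\nu F_\nu(\mathscr{X}^i)\big(B_\nu(\mathscr{X}^i)-\frac17B(\mathscr{X}^i)\big)$, and the last sum again needs control of each individual $F_\nu$.

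The missing idea is the paper's auxiliary three-tensor $\mathbb{D}$ of (\ref{3.8}). Expand $\sum_{i,j,k}\mathbb{D}(\mathscr{X}^i,\mathscr{X}^j,\mathscr{X}^k)^2$:
\begin{itemize}
\item the sixteen squared terms contribute $16h^{-2}\mathcal{D}^2|\nabla_Gh|^2$;
\item the cross terms within each half of (\ref{3.8}) vanish by Lemma \ref{lemma2.3};
\item the cross terms between the two halves contribute $16\sum_i\big[D(\mathscr{X}^i)^2-\sum_\nu F_\nu(\mathscr{X}^i)^2\big]$.
\end{itemize}
Nonnegativity of this sum of squares, together with $|\nabla_Gh|^2\le 4hf$, gives $a=4$ and $b=1$. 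The paper's split with weight $\frac{33}{400}$ then yields the constant $\frac1{1000}$.

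Alternatively, your near-orthogonality heuristic can be made exact. Let $P_\nu$ be the signed permutation matrix of $I_\nu$. From the Clifford relations $P_\mu P_\nu+P_\nu P_\mu=-2\delta_{\mu\nu}\mathrm{Id}$ one checks that $M+\sum_\nu P_\nu MP_\nu^{T}=(\operatorname{tr}M)\,\mathrm{Id}$ for every symmetric $8\times 8$ matrix $M$; (\ref{3.4}) is a special case. Apply this to $M_{ij}=\sum_k\mathcal{D}(\mathscr{X}^i,\mathscr{X}^k)\mathcal{D}(\mathscr{X}^k,\mathscr{X}^j)$, whose trace is $\mathcal{D}^2$. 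This gives the identity $h^2\sum_{\nu,i}F_\nu(\mathscr{X}^i)^2=\mathcal{D}^2|\nabla_Gh|^2-h^2\sum_iD(\mathscr{X}^i)^2$, which is even stronger than needed.

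Either way, this step has to be supplied. Relation (\ref{3.4}) for the single matrix $\mathcal{D}$, which is all you invoke, does not provide it.
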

\begin{proof}

From the definition (\ref{3.8}), we have
\begin{align}\label{*}
    \Sigma_{i,j,k=0}^7\mathbb{D}(\mathscr{X}^i,\mathscr{X}^j,\mathscr{X}^k)^2=16h^{-2}\mathcal{D}^2|\nabla h|^2+16\Sigma_{i=0}^7[D(\mathscr{X}^i)^2-\Sigma_{\nu=1}^7F_\nu(\mathscr{X}^i)^2]
\end{align}

This is because from Lemma \ref{lemma2.3}, we have
\begin{align*}
    \Sigma_{i,j,k=0}^7 \mathcal{D}(\mathscr{X}^i,\mathscr{X}^k)\mathscr{X}^j h\mathcal{D}(I_\nu\mathscr{X}^i,\mathscr{X}^k)I_\nu\mathscr{X}^j h=0\end{align*}
and 

\begin{align*}
    \Sigma_{i,j,k=0}^7 \mathcal{D}(I_\mu\mathscr{X}^i,\mathscr{X}^k)I_\mu\mathscr{X}^j h\mathcal{D}(I_\nu\mathscr{X}^i,\mathscr{X}^k)I_\nu\mathscr{X}^j h=0.\end{align*}

By the definition of $D(\mathscr{X}^i)$ and $F_\nu(\mathscr{X}^i)$, we have
\begin{align*}
    h^{-2}\Sigma_{i,j,k=0}^7\mathcal{D}(\mathscr{X}^i,\mathscr{X}^k)\mathscr{X}^j h \mathcal{D}(\mathscr{X}^j,\mathscr{X}^k)\mathscr{X}^i h=\Sigma_{i=0}^7D(\mathscr{X}^i)^2,\end{align*}
\begin{align*}
    &h^{-2}\Sigma_{i,j,k=0}^7\mathcal{D}(\mathscr{X}^i,\mathscr{X}^k)\mathscr{X}^j h \mathcal{D}(I_\nu\mathscr{X}^j,\mathscr{X}^k)I_\nu\mathscr{X}^i h\\=&-h^{-2}\Sigma_{i,j,k=0}^7\mathcal{D}(I_\nu\mathscr{X}^i,I_\nu\mathscr{X}^k)\mathscr{X}^j h \mathcal{D}(I_\nu\mathscr{X}^j,I_\nu\mathscr{X}^k)\mathscr{X}^i h\\=&-\Sigma_{i=0}^7F_\nu(\mathscr{X}^i)^2,\end{align*}
and
\begin{align*}
    &h^{-2}\Sigma_{i,j,k=0}^7\mathcal{D}(I_\nu \mathscr{X}^i,\mathscr{X}^k)I_\nu\mathscr{X}^j h \mathcal{D}(I_\nu \mathscr{X}^j,\mathscr{X}^k)I_\nu\mathscr{X}^i h\\=& h^{-2}\Sigma_{i,j,k=0}^7\mathcal{D}(\mathscr{X}^i,\mathscr{X}^k)\mathscr{X}^j h \mathcal{D}(\mathscr{X}^j,\mathscr{X}^k)\mathscr{X}^i h\\=&\Sigma_{i=0}^7D(\mathscr{X}^i)^2.\end{align*}
We should be careful to deal with the rest terms because of the non-associativity.
\begin{align*}
    &h^{-2}\Sigma_{\nu\neq \mu,\nu=1}^7\Sigma_{i,j,k=0}^7\mathcal{D}(I_\mu \mathscr{X}^i,\mathscr{X}^k)I_\mu \mathscr{X}^j h \mathcal{D}(I_\nu\mathscr{X}^j,\mathscr{X}^k)I_\nu\mathscr{X}^i h\\=&h^{-2}\Sigma_{\nu\neq \mu,\nu=1}^7\Sigma_{i,j,k=0}^7\mathcal{D}(I_\mu I_\nu \mathscr{X}^i,\mathscr{X}^k) \mathscr{X}^i h \mathcal{D}(I_\nu I_\mu \mathscr{X}^j,\mathscr{X}^k)\mathscr{X}^j h\\=  
    &-\Sigma_{\nu\neq \mu, \nu=1}^7\Sigma_{i=0}^7F_\nu(\mathscr{X}^i)^2,\end{align*}
since for $\nu=1,\mu=2$,
we have $I_1I_2\mathscr{X}^i=\delta I_3\mathscr{X}^i$ and $I_2I_1\mathscr{X}^i=-\delta I_3\mathscr{X}^i$ for $i=0,1,,...,7$, where $\delta=1$ or $-1$. Then
\begin{align*}
&h^{-2}\Sigma_{i,j,k=0}^7\mathcal{D}(I_2 I_1 \mathscr{X}^i,\mathscr{X}^k) \mathscr{X}^i h \mathcal{D}(I_1 I_2 \mathscr{X}^j,\mathscr{X}^k)\mathscr{X}^j h\\=& -\delta^2h^{-2}\Sigma_{i,j,k=0}^7\mathcal{D}(I_3\mathscr{X}^i,I_3\mathscr{X}^k)\mathscr{X}^i hD(I_3\mathscr{X}^j,I_3\mathscr{X}^k)\mathscr{X}^jh=-\Sigma_{i=0}^7F_3(\mathscr{X}^i)^2
\end{align*}

    To prove (\ref{jl}), we use (\ref{*}), that is
    \begin{align*}
        h^{-2}\mathcal{D}^2|\nabla h|^2+\Sigma_{i=0}^7[D(\mathscr{X}^i)^2-\Sigma_{\nu=1}^7F_\nu(\mathscr{X}^i)^2]\geq 0.    \end{align*}
        Then
\begin{align*}
        \frac{33}{100}fh\mathcal{D}^2+\frac{33}{400}\Sigma_{i=0}^7h^2D(\mathscr{X}^i)^2   \geq  \frac{33}{400}h^2\Sigma_{i=0}^7\Sigma_{\nu=1}^7F_\nu(\mathscr{X}^i)^2
    \end{align*}
 and 
 \begin{align*}
     \frac{33}{400}F_\nu(\mathscr{X}^i)^2+2B_\nu(\mathscr{X}^i)^2+\frac{2}{3}F_\nu(\mathscr{X}^i)B_\nu (\mathscr{X}^i)\geq \frac{1}{1000}B_\nu (\mathscr{X}^i)^2
 \end{align*}
 give (\ref{jl}).
\end{proof}

\section{Proof of the main result }\label{s4}

In this section, we use (\ref{jl}) to prove our main result.
\subsection{Decay} We state the decay of $u$:     
\begin{proposition}\cite{L2019}
    If $u$ satisfies (\ref{1.1}), then there are positive constants $c, C$ such that
    \begin{align}
        c|\xi|^{2-Q}\leq u(\xi)\leq C|\xi|^{2-Q}.
    \end{align}
    \end{proposition}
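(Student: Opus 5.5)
This two-sided decay estimate is the standard sharp decay of finite energy solutions to the critical subelliptic equation. It is cited from \cite{L2019}, so here I only outline the argument I would use to reproduce it on $G$.

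\textbf{Step 1: Kelvin transform and local boundedness.} I would pass to the group Kelvin transform adapted to H-type groups (the inversion of Korányi/Cowling et al.),
\[
\hat u(\xi)=|\xi|^{2-Q}u(\sigma(\xi)),
\]
where $\sigma$ is the CR-type inversion. On Iwasawa-type groups, which include the octonionic one, $\sigma$ preserves the equation $\Delta_G u+160u^{\frac{Q+2}{Q-2}}=0$, and it preserves the $\mathscr{D}^{1,2}$ norm. So $\hat u$ is again a finite energy solution of (\ref{1.1}) on $G\setminus\{0\}$. Using $u\in\mathscr{D}^{1,2}$, hence $u\in L^{\frac{2Q}{Q-2}}$ by (\ref{1.3}), a Brezis–Kato / Moser iteration with the potential $V=u^{\frac{4}{Q-2}}\in L^{Q/2}$ gives $u\in L^p$ for all large $p$. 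The Folland–Stein subelliptic local estimates then give $u\in L^\infty_{loc}$. The same argument applies to $\hat u$, and the point singularity at $0$ is removable because the energy near $0$ is small. Hence $\hat u$ is bounded near $0$, which is exactly
\[
u(\xi)\le C|\xi|^{2-Q}\quad\text{for }|\xi|\ \text{large}.
\]
Together with local boundedness of $u$, this is the upper bound everywhere, since $|\xi|^{2-Q}$ is large near the origin.

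\textbf{Step 2: Lower bound by comparison.} Since $\Delta_G u\le 0$ and $u>0$, $u$ is superharmonic. The fundamental solution of $\Delta_G$ is $c_Q|\xi|^{2-Q}$, because $|\xi|=(|x|^4+|t|^2)^{1/4}$ is the gauge on H-type groups. On $B_1$ take $m=\min_{\partial B_1}u>0$. The weak maximum principle on $G\setminus B_1$ compares $u$ with $m|\xi|^{2-Q}$, where $u-m|\xi|^{2-Q}\ge0$ on $\partial B_1$ and $u\to0$ in the $\mathscr{D}^{1,2}$ sense at infinity. This yields $u\ge m|\xi|^{2-Q}$ outside $B_1$. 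Inside $B_1$ positivity and continuity give a lower bound by a constant, which dominates $c|\xi|^{2-Q}$ only after adjusting constants near the origin. For the statement as written near $0$, I would interpret the estimate as holding for $|\xi|\ge1$ (or with $(1+|\xi|)$ in place of $|\xi|$), as is customary.

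\textbf{Main obstacle.} The main difficulty is Step 1, namely verifying the conformal invariance of the equation under the Kelvin transform on the octonionic group. This needs the explicit inversion and the identity $\Delta_G(|\xi|^{2-Q}v\circ\sigma)=|\xi|^{-2-Q}(\Delta_G v)\circ\sigma$. That identity is known for Iwasawa groups by Cowling–Dooley–Korányi–Ricci and is used in \cite{GV2001}. If it proved troublesome, I would fall back on a direct argument: a Moser iteration on the exterior domain combined with a representation formula $u=c\int|\zeta^{-1}\circ\xi|^{2-Q}u^{\frac{Q+2}{Q-2}}(\zeta)\,d\zeta$, followed by bootstrapping the decay rate up to $|\xi|^{2-Q}$.
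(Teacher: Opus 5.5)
Your argument is sound, but it is not the route behind the statement. The paper gives no proof and imports the bound from \cite{L2019}, a decay result stated for general Carnot groups. Since such groups have no inversion, the estimate there is obtained by iteration and comparison arguments that use no conformal structure, essentially your fallback.

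Your main route instead uses the fact that the octonionic Heisenberg group is of Iwasawa type. There the Kelvin transform $\hat u(\xi)=|\xi|^{2-Q}u(\sigma(\xi))$ preserves both the equation and the energy, so decay of $u$ at infinity becomes local boundedness of $\hat u$ at the origin. The precise removability statement is that $\hat u\in\mathscr{D}^{1,2}$ solves the equation weakly across $0$ because a point has zero capacity when $Q>2$; ``small energy near $0$'' is not quite the right reason.

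This route is shorter and gives more. Once $\hat u$ is a bounded positive supersolution near $0$, weak Harnack gives $\hat u(0)>0$, and then $|\xi|^{Q-2}u(\xi)=\hat u(\sigma(\xi))\to\hat u(0)$. That makes your Step 2 redundant, though Step 2 is also correct: with the paper's normalization $[\mathscr{X}^0,\mathscr{X}^1]=-4\partial_{t^1}$, the function $(|x|^4+|t|^2)^{(2-Q)/4}$ is exactly $\Delta_G$-harmonic away from the origin. The price is that your argument depends on the conformality of the inversion (Cowling--Dooley--Kor\'anyi--Ricci), a special feature of groups satisfying the $J^2$ condition. The argument behind \cite{L2019} works on any Carnot group.

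One correction: no adjustment of constants makes the lower bound hold near $\xi=0$. There $u$ is bounded while $|\xi|^{2-Q}\to\infty$, so the two-sided estimate only makes sense for $|\xi|\ge 1$. That is also how the paper uses it (``when $|\xi|\rightarrow\infty$'').
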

    For $R>1$, we consider $v(\xi)=R^{Q-2}u(R\xi)$, we have that $v$ is bounded in $B_6\backslash B_1$ and $v(\xi)$ satisfies
    \begin{align*}
        \Delta_G v+cR^{-2}v^\frac{Q+2}{Q-2}=0\,\,\textrm{in}\,B_6\backslash B_1
    \end{align*}
By local regularity theory, see \cite{F1975}, we have $v(\xi)\in \Gamma^{1,\alpha}$ in $B_5\backslash B_2$. A bootstrap argument shows that $v(\xi) \in \Gamma^{\infty}$ in $B_4\backslash B_3$. Therefore for all horizontal vector fields $\mathscr{X}^i, \mathscr{X}^j, \mathscr{X}^k$, when $|\xi|\rightarrow \infty$, we have 
\begin{align}\label{est}
    |\mathscr{X}^iu(\xi)|\leq C|\xi|^{1-Q},\,\,|\mathscr{X}^i\mathscr{X}^j u(\xi)|\leq C |\xi|^{-Q}\,\,|\mathscr{X}^i\mathscr{X}^j\mathscr{X}^k u(\xi)|\leq C|\xi|^{-Q-1}.  
\end{align}
\subsection{Integration estimates}
From (\ref{est}), as $|\xi|\rightarrow\infty$, we have
\begin{align*}
    &f\leq C |\xi|^2,h +|\xi||\mathscr{X}^i h|+|\xi|^2|\mathscr{X}^i\mathscr{X}^j h|+|\xi|^3|\mathscr{X}^i\mathscr{X}^j\mathscr{X}^k h|   \leq C |\xi|^4,\\& |\mathcal{D}(\mathscr{X}^i,\mathscr{X}^j)|\leq C|\xi|^{-2},|B_\nu(\mathscr{X}^i)|\leq C|\xi|^{-3}.
\end{align*}
Therefore, by (\ref{jl}),
\begin{equation*}
\begin{aligned}
     &\frac{1}{1000}\int_{B_R}h^{-11}\bigg[fh\mathcal{D}^2+\Sigma_{i=0}^7\Sigma_{\nu=1}^7h^2B_\nu (\mathscr{X}^i)^2\bigg]     
     \\\leq& \int_{B_R} h^{-11}\bigg(\frac{1}{3} O_1+\frac{4}{3}O_2+4O_3\bigg)=O(R^{2-Q})\rightarrow 0, \,\,\textrm{as}\,R\rightarrow \infty.\end{aligned}
    \end{equation*}
We conclude that for all  $\mathscr{X}^i$ and $ \mathscr{X}^j$, 
\begin{align}\label{4.3}
    B_\nu (\mathscr{X}^i)=\mathcal{D}(\mathscr{X}^i,\mathscr{X}^j)=\mathcal{E}(\mathscr{X}^i,\mathscr{X}^j)=0.
\end{align}

\subsection{The conclusion}
Using (\ref{4.3}), we follow \cite{IMV2014} closely to get our conclusion.
\begin{lemma}\label{lemma 4.2}
For any $i,j=0,1,..7,$ $\mathcal{D}(\mathscr{X}^i,\mathscr{X}^j)=0$ implies 
   \begin{align}
       \mathscr{X}^i\mathscr{X}^j h=\frac{1}{2}[ \mathscr{X}^i,  \mathscr{X}^j]h\,\,\,\textrm{for}\,i\neq j.
   \end{align}
   
\end{lemma}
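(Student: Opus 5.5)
The plan is to use both vanishing statements in (\ref{4.3}), namely $\mathcal{D}(\mathscr{X}^i,\mathscr{X}^j)=0$ and $\mathcal{E}(\mathscr{X}^i,\mathscr{X}^j)=0$. Adding them cancels the mixed second derivatives $\Sigma_{\nu=1}^7(I_\nu\mathscr{X}^j)(I_\nu\mathscr{X}^i)h$, which leaves a formula for $\mathscr{X}^j\mathscr{X}^ih$ alone. Comparing that formula with Lemma \ref{lemma2.1} should give the claim.

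\emph{Step 1.} For $i\neq j$ the $\delta_{ij}$ terms drop. Then $8h\,\mathcal{E}(\mathscr{X}^i,\mathscr{X}^j)+8h\,\mathcal{D}(\mathscr{X}^i,\mathscr{X}^j)=0$ becomes
\begin{align*}
8\mathscr{X}^j\mathscr{X}^ih-16\Sigma_{\nu=1}^7h_{t^\nu}\omega_\nu(\mathscr{X}^i,\mathscr{X}^j)-4h^{-1}\bigg[\mathscr{X}^jh\mathscr{X}^ih+\Sigma_{\nu=1}^7(I_\nu\mathscr{X}^j)h(I_\nu\mathscr{X}^i)h\bigg]=0 .
\end{align*}

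\emph{Step 2.} I claim the bracketed gradient term vanishes for $i\neq j$. Put $g_k=\mathscr{X}^kh$. By the table of $I_\nu$, each $I_\nu\mathscr{X}^i h$ equals $\pm g_{\sigma_\nu(i)}$. So the bracket is the inner product of the $i$-th and $j$-th columns of the $8\times 8$ matrix whose rows are $(I_\nu\mathscr{X}^k h)_k$, $\nu=0,\dots,7$, with $I_0=\mathrm{id}$. This is, up to signs, the matrix of octonionic multiplication by $g$. By the norm-multiplicativity (composition property) of the octonions, its columns are orthogonal with common length $|\nabla_G h|$. Concretely, this is the statement
\[
\mathscr{X}^jh\mathscr{X}^ih+\Sigma_{\nu}(I_\nu\mathscr{X}^j)h(I_\nu\mathscr{X}^i)h=|\nabla_G h|^2\delta_{ij},
\]
which is the same identity used in rewriting $\mathcal{E}$ (where the $-|\nabla_G h|^2\delta_{ij}$ term was absorbed). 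It can also be checked directly from the table, or via Lemma \ref{lemma2.3} after the substitution $\mathscr{X}^i\mapsto I_\mu\mathscr{X}^i$. This verification is the only step I expect to need care, because of non-associativity. I would do it by a direct check on one pair, say $(i,j)=(0,1)$, and then transport it to the other pairs using Lemma \ref{lemma2.2}.

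\emph{Step 3.} Steps 1 and 2 give, for $i\neq j$,
\[
\mathscr{X}^j\mathscr{X}^ih=2\Sigma_{\nu=1}^7h_{t^\nu}\,\omega_\nu(\mathscr{X}^i,\mathscr{X}^j).
\]
Reading off Lemma \ref{lemma2.1} against the forms $\omega_\nu$ (for example $[\mathscr{X}^0,\mathscr{X}^1]=-4\partial_{t^1}$ with $\omega_1(\mathscr{X}^0,\mathscr{X}^1)=1$), we have
\[
[\mathscr{X}^i,\mathscr{X}^j]=-4\Sigma_\nu\omega_\nu(\mathscr{X}^i,\mathscr{X}^j)\partial_{t^\nu}.
\]
Hence $\mathscr{X}^j\mathscr{X}^ih=-\tfrac12[\mathscr{X}^i,\mathscr{X}^j]h=\tfrac12[\mathscr{X}^j,\mathscr{X}^i]h$. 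Exchanging the roles of $i$ and $j$ gives the statement. An equivalent way to say this is that the symmetric part $\tfrac12(\mathscr{X}^i\mathscr{X}^j+\mathscr{X}^j\mathscr{X}^i)h$ of the horizontal Hessian vanishes off the diagonal.
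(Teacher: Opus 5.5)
Your proof is correct, but it removes the mixed term differently from the paper. The paper uses $\mathcal{D}(\mathscr{X}^i,\mathscr{X}^j)=0$ alone. For $(i,j)=(0,1)$ the table turns $-\sum_{\nu}(I_\nu\mathscr{X}^1)(I_\nu\mathscr{X}^0)h$ into $\mathscr{X}^0\mathscr{X}^1h+[\mathscr{X}^3,\mathscr{X}^2]h+[\mathscr{X}^5,\mathscr{X}^4]h+[\mathscr{X}^6,\mathscr{X}^7]h$. The equation then reads $8\mathscr{X}^1\mathscr{X}^0h-[\mathscr{X}^1,\mathscr{X}^0]h+12h_{t^1}=24h_{t^1}$, and Lemma~\ref{lemma2.1} gives $\mathscr{X}^1\mathscr{X}^0h=2h_{t^1}$. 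No products of first derivatives ever enter.

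You instead cancel the mixed sum by adding $\mathcal{E}=0$. Off the diagonal, $\mathcal{E}\equiv 0$ amounts to two separate identities. The second-order bracket vanishes, and that is exactly the commutator rewriting the paper does by hand. The gradient bracket vanishes, and that is your Step~2. So your route takes the paper's key computation through the identity $\mathcal{E}\equiv0$, which the paper states without computation, and then needs an extra algebraic lemma the direct route never uses. What it buys is uniformity: the relation $[\mathscr{X}^i,\mathscr{X}^j]=-4\sum_\nu\omega_\nu(\mathscr{X}^i,\mathscr{X}^j)\partial_{t^\nu}$ handles all pairs at once, instead of one worked case plus ``the other cases are the same.''

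Two small corrections to Step~2.
\begin{enumerate}
\item The paper's rewriting of $\mathcal{E}$ only uses (\ref{3.1}) to merge the $\delta_{ij}$ terms. So your column identity is not ``the same identity used there''; it is what makes the claim $\mathcal{E}\equiv0$ true.
\item Transporting one checked pair via Lemma~\ref{lemma2.2} is vaguer than it needs to be. Take the rows $(I_\nu\mathscr{X}^kh)_k$ for $\nu=0,\dots,7$, with $I_0=\mathrm{id}$. Each row is a signed permutation of $\nabla_Gh$, and the rows are pairwise orthogonal by Lemma~\ref{lemma2.3}. Hence the $8\times8$ matrix $M$ satisfies $MM^{T}=|\nabla_Gh|^2\,\mathrm{Id}$. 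For a square matrix this forces $M^{T}M=|\nabla_Gh|^2\,\mathrm{Id}$, which is exactly your column statement.
\end{enumerate}
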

\begin{proof}
    We only consider $i=1,j=0$ here, the other cases can also be proved in the same way.
    
    $\mathcal{D}(\mathscr{X}^0,\mathscr{X}^1)=0$ gives
    \begin{align*}
        &7\mathscr{X}^1\mathscr{X}^0 h+\mathscr{X}^0 \mathscr{X}^1 h+\mathscr{X}^3 \mathscr{X}^2 h-\mathscr{X}^2 \mathscr{X}^3 h\\&+\mathscr{X}^5\mathscr{X}^4 h-\mathscr{X}^4 \mathscr{X}^5 h-\mathscr{X}^7 \mathscr{X}^6 h+\mathscr{X}^6 \mathscr{X}^7h=24h_{t^1}.        
        \end{align*}
        That is
        \begin{align*}
            8\mathscr{X}^1\mathscr{X}^0 h-[\mathscr{X}^1,\mathscr{X}^0]h+[\mathscr{X}^3,\mathscr{X}^2]h+[\mathscr{X}^5,\mathscr{X}^4]h+[\mathscr{X}^6,\mathscr{X}^7]h=24h_{t^1}.            \end{align*}
            By Lemma \ref{lemma2.1}, we have $\mathscr{X}^1 \mathscr{X}^0 h=2h_{t^1}.$
            
\end{proof}
\begin{lemma}\label{lemma 4.3}
For any $i,j=0,1,..7$,
 $\mathcal{D}(\mathscr{X}^i,\mathscr{X}^j)=0$ implies
    \begin{align}\label{4.5}
        h_{t^\nu t^\nu}=2\kappa,\,\,\, h_{t^\nu t^\mu}=0,\, \nu \neq \mu,
    \end{align}
    for $\nu,\mu=1,2,..,7$, where $\kappa$ is a positive constant. In particular,
    \begin{align}\label{4.6}
        h(x,t)=g(x)+\kappa\Sigma_{\nu=1}^7(t^\nu+s^\nu(x))^2,
    \end{align}
    for some functions $g(x), s^\nu(x),\nu=1,2,...,7$ on $\mathbb{R}^{8}$. 
\end{lemma}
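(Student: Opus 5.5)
From Lemma \ref{lemma 4.2} I will derive third-order identities in $t$ by commuting horizontal derivatives, and then integrate. This follows the scheme of \cite{IMV2014}.

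\emph{Step 1: mixed second derivatives.} By Lemma \ref{lemma 4.2}, for every pair $i\neq j$ we have $\mathscr{X}^i\mathscr{X}^jh=\frac12[\mathscr{X}^i,\mathscr{X}^j]h$. By Lemma \ref{lemma2.1} the right-hand side is $\pm 2h_{t^\nu}$, where $\nu$ is determined by the pair through the table, i.e. $\mathscr{X}^j=\pm I_\nu\mathscr{X}^i$. So every off-diagonal horizontal second derivative of $h$ equals $\pm 2$ times a single $t$-derivative of $h$.

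\emph{Step 2: differentiate once more and commute.} Fix $\nu$ and a triple $i,j,k$ of distinct indices with $[\mathscr{X}^j,\mathscr{X}^k]=-4\partial_{t^\nu}$. Compute $\mathscr{X}^i\mathscr{X}^j\mathscr{X}^kh$ in two ways.
\begin{itemize}
\item First, use Step 1 on $\mathscr{X}^j\mathscr{X}^kh$ to get $\mathscr{X}^i\mathscr{X}^j\mathscr{X}^kh=-2\mathscr{X}^ih_{t^\nu}$.
\item Second, write $\mathscr{X}^i\mathscr{X}^j=\mathscr{X}^j\mathscr{X}^i+[\mathscr{X}^i,\mathscr{X}^j]$, and then apply Step 1 to $\mathscr{X}^i\mathscr{X}^kh$. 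This gives derivatives $\mathscr{X}^jh_{t^\mu}$ plus a term $h_{t^\nu t^{\mu'}}$.
\end{itemize}
The $t$-derivatives commute with all $\mathscr{X}$'s, so comparing the two expressions produces linear relations among the $\mathscr{X}^lh_{t^\mu}$. Iterating with one more derivative gives relations among the $h_{t^\mu t^\tau}$.

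A cleaner route is to apply $\mathscr{X}^j$ to the identity $\mathscr{X}^i\mathscr{X}^jh=\frac12[\mathscr{X}^i,\mathscr{X}^j]h$ and antisymmetrize. Via Lemma \ref{lemma2.1}, this expresses $[\mathscr{X}^j,\mathscr{X}^i]\mathscr{X}^jh$ and hence $h_{t^\nu t^\mu}$ in terms of horizontal data. The target conclusions are:
\begin{itemize}
\item all the $\mathscr{X}^l h_{t^\mu t^\tau}$ vanish, so each $h_{t^\mu t^\tau}$ is constant, since $\mathscr{X}$ and $\partial_t$ span;
\item $h_{t^\mu t^\tau}=0$ for $\mu\neq\tau$;
\item $h_{t^\mu t^\mu}=h_{t^\tau t^\tau}$.
\end{itemize}
For the last two I will use the symmetric choices of index quadruples in Lemma \ref{lemma2.1}: each $\partial_{t^\nu}$ appears as four brackets, and Lemma \ref{lemma2.2} lets me pass between $I_\nu I_\mu$ and $\pm I_\tau$. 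This is the main obstacle. Because the algebra is non-associative, I cannot use the quaternionic shortcut of \cite{IMV2014}. I will instead check the relations on an explicit triple for $\nu=1$, e.g. indices $0,1,2,3$ with the $\partial_{t^1},\partial_{t^2},\partial_{t^3}$ brackets, where the relevant subalgebra is associative (quaternionic). I will then transfer the result to all $\nu$ by the symmetry of the table: every pair $\nu\neq\mu$ lies in some quaternionic triple $\{\nu,\mu,\tau\}$ with $I_\nu I_\mu=\pm I_\tau$.

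\emph{Step 3: sign and integration.} Once $h_{t^\nu t^\nu}=2\kappa$ and $h_{t^\nu t^\mu}=0$, integrating in $t$ gives
\[
h=g(x)+\kappa\Sigma_{\nu=1}^7(t^\nu+s^\nu(x))^2
\]
when $\kappa\neq 0$, since $h$ is a quadratic polynomial in $t$ with scalar Hessian. It remains to show $\kappa>0$.
\begin{itemize}
\item By the decay estimate, $h=u^{-4/(Q-2)}\ge c|\xi|^4$, so along $x$ fixed and $|t|\to\infty$ we get $h\gtrsim|t|^2$. This forces $\kappa\ge 0$.
\item If $\kappa=0$, then $h$ is affine in $t$ with growth at most linear in $|t|$, contradicting the lower bound $h\gtrsim |t|^2$.
\end{itemize}
Hence $\kappa>0$.
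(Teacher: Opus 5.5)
Your overall strategy matches the paper's: commute horizontal derivatives using Lemma 4.2 and the bracket table, then integrate in $t$. Your Step~3 is sound. Using $h=u^{-4/(Q-2)}\ge c|\xi|^4$ to exclude $\kappa\le 0$ is in fact more careful than the paper's remark, which by itself only rules out $\kappa<0$.

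The gap is Step~2: it lists the three target conclusions but derives none of them. Second $t$-derivatives first appear only at fourth horizontal order. In your third-order comparison the commutator term is $[\mathscr{X}^i,\mathscr{X}^j]\mathscr{X}^kh=\pm4\,\mathscr{X}^kh_{t^{\mu'}}$, not $h_{t^\nu t^{\mu'}}$. Your ``apply $\mathscr{X}^j$ and antisymmetrize'' route produces only identities such as $(I_\nu\mathscr{X}^i)(\mathscr{X}^i)^2h=6\,\mathscr{X}^ih_{t^\nu}$. What is needed is a fourth-order computation such as
\[
8h_{t^1t^2}=-4\mathscr{X}^0\partial_{t^2}\mathscr{X}^1h=\mathscr{X}^0[\mathscr{X}^0,\mathscr{X}^2]\mathscr{X}^1h=2(\mathscr{X}^0)^2h_{t^3}-4h_{t^1t^2}.
\]
The same computation with $\mathscr{X}^1,\mathscr{X}^2$ exchanged gives $-2(\mathscr{X}^0)^2h_{t^3}-4h_{t^1t^2}$. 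Adding the two yields $h_{t^1t^2}=0$ \emph{and} the by-product $(\mathscr{X}^0)^2h_{t^3}=0$. Analogous computations give $(\mathscr{X}^i)^2h_{t^\nu}=0$ for all $i,\nu$. Inside your block $\{0,1,2,3\}$ one similarly gets $8h_{t^1t^1}=4\mathscr{X}^3\mathscr{X}^2h_{t^1}=-\mathscr{X}^3[\mathscr{X}^0,\mathscr{X}^1]\mathscr{X}^2h=4h_{t^2t^2}+4h_{t^3t^3}$. The paper uses $[\mathscr{X}^5,\mathscr{X}^4]$ here instead and obtains $4h_{t^6t^6}+4h_{t^7t^7}$.

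The genuinely missing idea is the constancy of $h_{t^\nu t^\nu}$. You assert that all $\mathscr{X}^lh_{t^\mu t^\tau}$ vanish but give no mechanism. From $h_{t^\nu t^\mu}=0$ for $\nu\ne\mu$ and equal diagonals alone, you only get $h_{t^\nu t^\nu}=\phi(x)$, and nothing in your outline forces $\phi$ to be constant. The paper closes this by combining the two pieces above: apply $\partial_{t^\nu}$ to your third-order identity and use the by-product to get
\[
6\,\mathscr{X}^ih_{t^\nu t^\nu}=(I_\nu\mathscr{X}^i)(\mathscr{X}^i)^2h_{t^\nu}=0 .
\]

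Non-associativity is not actually an obstacle at this stage. Only brackets of three or four horizontal fields enter, and these are multiples of $\partial_{t^\nu}$, so your quaternionic-triple organization does work. For each line $\{\nu,\mu,\tau\}$ of the multiplication table, the fields $\mathscr{X}^0,\mathscr{X}^\nu,\mathscr{X}^\mu,\mathscr{X}^\tau$ have the bracket pattern of $\mathscr{X}^0,\dots,\mathscr{X}^3$ after relabeling. This gives $2h_{t^\nu t^\nu}=h_{t^\mu t^\mu}+h_{t^\tau t^\tau}$ cyclically, hence equality along each line, and every pair $\nu\ne\mu$ lies on some line. That is a slightly tidier route to equal diagonals than the paper's full system of relations. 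As written, however, none of these computations, nor the constancy step, is carried out.
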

\begin{proof}

From
    \begin{align*}
        &8h_{t^1t^2}=-4\mathscr{X}^0\mathscr{X}^1 h_{t^2}=-4\mathscr{X}^0\partial_{t^2}\mathscr{X}^1 h=\mathscr{X}^0[\mathscr{X}^0,\mathscr{X}^2]\mathscr{X}^1 h\\=&(\mathscr{X}^0)^2\mathscr{X}^2\mathscr{X}^1 h-\mathscr{X}^0\mathscr{X}^2\mathscr{X}^0\mathscr{X}^1 h =2(\mathscr{X}^0)^2h_{t^3}-4h_{t^1t^2}
        \end{align*}
and 
 \begin{align*}
&8h_{t^1t^2}=-4\mathscr{X}^0\mathscr{X}^2h_{t^1}=-4\mathscr{X}^0\partial_{t^1}\mathscr{X}^2 h=\mathscr{X}^0_\alpha[\mathscr{X}^0,\mathscr{X}^1]\mathscr{X}^2 h\\=&(\mathscr{X}^0)^2\mathscr{X}^1\mathscr{X}^2 h-\mathscr{X}^0\mathscr{X}^1\mathscr{X}^0\mathscr{X}^2 h =-2(\mathscr{X}^0)^2h_{t^3}-4h_{t^1t^2},
        \end{align*}
we have $h_{t^1t^2}=(\mathscr{X}^0)^2h_{t^3}=0$. Analogous calculations show that
\begin{align}\label{4.8}
    h_{t^\nu t^{\mu}}=0, (\mathscr{X}^i)^2h_{t^\nu}=0,
\end{align}
  for $\nu,\mu=1,2,..,7$, $\nu\neq \mu$ and $i=0,1,...,7$.\\
  Furthermore, we have
 \small{\begin{align*}
      &8h_{t^1t^1}=4\mathscr{X}^3\mathscr{X}^2 h_{t^1}=4\mathscr{X}^3\partial_{t^1}\mathscr{X}^2 h=\mathscr{X}^3[\mathscr{X}^5,\mathscr{X}^4]\mathscr{X}^2h\\=&\mathscr{X}^3\mathscr{X}^5\mathscr{X}^4\mathscr{X}^2 h-\mathscr{X}^3\mathscr{X}^4\mathscr{X}^5\mathscr{X}^2h=4h_{t^6t^6}+4h_{t^7t^7}.  \end{align*}}
      Similar calculations yield
    \small{\begin{align*}
2h_{t^1t^1}=h_{t^2t^2}+h_{t^3t^3}=h_{t^4t^4}+h_{t^5t^5}=h_{t^7t^7}+h_{t^6t^6},
      \end{align*}
 \begin{align*}
2h_{t^2t^2}=h_{t^3t^3}+h_{t^1t^1}=h_{t^4t^4}+h_{t^6t^6}=h_{t^5t^5}+h_{t^7t^7},
      \end{align*}
 \begin{align*}
2h_{t^3t^3}=h_{t^1t^1}+h_{t^2t^2}=h_{t^4t^4}+h_{t^7t^7}=h_{t^6t^6}+h_{t^5t^5},
      \end{align*}
 \begin{align*}
          2h_{t^4t^4}=h_{t^5t^5}+h_{t^1t^1}=h_{t^6t^6}+h_{t^2t^2}=h_{t^7t^7}+h_{t^3t^3},
      \end{align*}
 \begin{align*}
          2h_{t^5t^5}=h_{t^1t^1}+h_{t^4t^4}=h_{t^7t^7}+h_{t^2t^2}=h_{t^3t^3}+h_{t^6t^6},
      \end{align*}
 \begin{align*}
          2h_{t^6t^6}=h_{t^1t^1}+h_{t^7t^7}=h_{t^2t^2}+h_{t^4t^4}=h_{t^5t^5}+h_{t^3t^3},
      \end{align*}
 \begin{align*}
          2h_{t^7t^7}=h_{t^6t^6}+h_{t^1t^1}=h_{t^2t^2}+h_{t^5t^5}=h_{t^3t^3}+h_{t^4t^4}.
      \end{align*}}
Then, we conclude that $h_{t^\nu t^\nu}=h_{t^\mu t^\mu}$  and $h_{t^\nu t^\nu t^\nu}=h_{t^\mu t^\mu t^\nu}=0$ for $\nu\neq \mu=1,2,...,7$. 

Next, we prove that $h_{t^\nu t^\nu}$ is constant. Using (\ref{4.8}), we have
\begin{align*}
    0&=\partial_{t^\nu}(I_\nu \mathscr{X}^i) (\mathscr{X}^i)^2 h=\partial_{t^\nu}\mathscr{X}^i(I_\nu \mathscr{X}^i)\mathscr{X}^i h+\partial_{t^\nu}[I_\nu \mathscr{X}^i,\mathscr{X}^i]\mathscr{X}^ih\\&=2\mathscr{X}^i h_{t^\nu t^\nu}+4\mathscr{X}^i h_{t^\nu t^\nu}=6\mathscr{X}^i h_{t^\nu t^\nu},
\end{align*}
for any $\nu=1,2,...,7$ and $i=0,1,...,7$.

 We have proved the vanishing of all derivatives of the of $h_{t^\nu t^\nu}$, which means $h_{t^\nu t^\nu}$ is constant, which we denote by $2\kappa$. Let us point out that $\kappa>0$ since $h>0$ and $g$ is independent of $t^\nu$.

\end{proof}

We define $h=g(x)+\kappa H$, where
\begin{align*}
    H=\Sigma_{\nu=1}^7(t^\nu+s^\nu(x))^2\end{align*}

Therefore, we come to

\begin{lemma}
For  $i,j=0,...,7$, $\nu,\mu=1,2,...,7$, from $\mathcal{D}(\mathscr{X}^i,\mathscr{X}^j)=0$, we have\\
(1)
\begin{align}\label{4.15}
    \mathscr{X}^i \mathscr{X}^i h_{t^\nu}=0,
\end{align}
(2)
for $i\neq j$,
\begin{align}\label{4.17}
    \mathscr{X}^i \mathscr{X}^j h_{t^\nu}= 4\delta_{\nu\mu}\kappa,
\end{align}
where $\mathscr{X}^i \mathscr{X}^j h=2h_{t^\mu}$ for some $\mu$.

\end{lemma}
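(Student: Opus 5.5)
Part (1) is already contained in the proof of Lemma \ref{lemma 4.3}: the identity (\ref{4.8}) gives $(\mathscr{X}^i)^2h_{t^\nu}=0$ for all $i$ and $\nu$. I would redo that computation uniformly. Since $\partial_{t^\nu}$ commutes with every $\mathscr{X}^k$, Lemma \ref{lemma 4.2} shows that for $j\neq i$,
\[
\mathscr{X}^j\mathscr{X}^i h_{t^\nu}=\partial_{t^\nu}\mathscr{X}^j\mathscr{X}^i h=\tfrac12\partial_{t^\nu}[\mathscr{X}^j,\mathscr{X}^i]h=\pm 2 h_{t^\sigma t^\nu},
\]
where $\sigma$ is the index with $[\mathscr{X}^j,\mathscr{X}^i]=\mp4\partial_{t^\sigma}$ from Lemma \ref{lemma2.1}. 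Now fix $i$ and $\nu$. Choose $k$ with $[\mathscr{X}^i,\mathscr{X}^k]=-4\partial_{t^\nu}$, so $\mathscr{X}^k=\pm I_\nu\mathscr{X}^i$. Then
\[
-4(\mathscr{X}^i)^2h_{t^\nu}=\mathscr{X}^i(\mathscr{X}^i\mathscr{X}^k-\mathscr{X}^k\mathscr{X}^i)\mathscr{X}^i h .
\]
Expand the right side and use Lemma \ref{lemma 4.2} on each mixed pair $\mathscr{X}^k\mathscr{X}^i h$. Every term then becomes either a multiple of $(\mathscr{X}^i)^2h_{t^\sigma}$ or a multiple of a $t$–$t$ second derivative, which by Lemma \ref{lemma 4.3} is $0$ or $2\kappa$. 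The computation in the proof of Lemma \ref{lemma 4.3} for $i=0,\nu=3$ shows that the $\kappa$ contributions cancel, leaving $(\mathscr{X}^i)^2h_{t^\nu}=0$. The same cancellation happens for every $(i,\nu)$, because the structure constants of Lemma \ref{lemma2.1} are all $\pm4$ and the two orderings contribute opposite signs.

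For part (2), I would use the formula above. Let $i\neq j$ and suppose $\mathscr{X}^i\mathscr{X}^j h=2h_{t^\mu}$; this means $\tfrac12[\mathscr{X}^i,\mathscr{X}^j]=2\partial_{t^\mu}$ by Lemma \ref{lemma 4.2}. Then
\[
\mathscr{X}^i\mathscr{X}^j h_{t^\nu}=\partial_{t^\nu}(\mathscr{X}^i\mathscr{X}^j h)=2h_{t^\mu t^\nu}.
\]
By (\ref{4.5}) this equals $4\kappa$ when $\nu=\mu$ and $0$ when $\nu\neq\mu$, which is exactly $4\delta_{\nu\mu}\kappa$. The sign convention in the statement is only bookkeeping. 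If instead $\mathscr{X}^i\mathscr{X}^jh=-2h_{t^\mu}$, apply the argument to the reversed pair, since $\mathscr{X}^j\mathscr{X}^ih=-\mathscr{X}^i\mathscr{X}^jh$ by Lemma \ref{lemma 4.2}.

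The main obstacle is part (1) in full generality. There are $8\times7$ pairs $(i,\nu)$, and the sign pattern in Lemma \ref{lemma2.1} is not associative. So one cannot simply transport the case $i=0$ by a symmetry, and the two mixed terms $\mathscr{X}^i\mathscr{X}^k\mathscr{X}^i h$ need care. My first approach would be to write $\mathscr{X}^i\mathscr{X}^k\mathscr{X}^ih=\mathscr{X}^i(\mathscr{X}^i\mathscr{X}^kh)-\mathscr{X}^i[\mathscr{X}^i,\mathscr{X}^k]h$. Both pieces then reduce, through Lemma \ref{lemma 4.2}, to $\mathscr{X}^i$ applied to a multiple of $h_{t^\nu}$. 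This should give a relation of the form $c\,(\mathscr{X}^i)^2h_{t^\nu}=0$ with $c\neq0$; for instance, $-4+8\neq0$ matches the pattern $8h_{t^1t^2}=\dots-4h_{t^1t^2}$ seen earlier. That relation holds for every pair $(i,\nu)$, and it avoids any case-by-case sign checking.
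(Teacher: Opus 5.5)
Your part (2) is the intended argument: the paper gives no separate proof, and (2) is just $\partial_{t^\nu}$ applied to Lemma \ref{lemma 4.2} combined with (\ref{4.5}). Citing (\ref{4.8}) for (1) is also how the paper obtains it.

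\textbf{The gap is in your own derivation of (1): it is a tautology.} If $[\mathscr{X}^i,\mathscr{X}^k]=-4\partial_{t^\nu}$, Lemma \ref{lemma 4.2} gives $\mathscr{X}^k\mathscr{X}^ih=2h_{t^\nu}$, hence
\begin{align*}
(\mathscr{X}^i)^2\mathscr{X}^k\mathscr{X}^ih=2(\mathscr{X}^i)^2h_{t^\nu},\qquad \mathscr{X}^k(\mathscr{X}^i)^2h=\mathscr{X}^i\mathscr{X}^k\mathscr{X}^ih+[\mathscr{X}^k,\mathscr{X}^i]\mathscr{X}^ih=6\mathscr{X}^ih_{t^\nu}.
\end{align*}
Your right-hand side is therefore $2(\mathscr{X}^i)^2h_{t^\nu}-6(\mathscr{X}^i)^2h_{t^\nu}=-4(\mathscr{X}^i)^2h_{t^\nu}$, exactly the left-hand side. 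So $c=0$, and no $t$--$t$ derivative or $\kappa$ ever appears. The fallback in your last paragraph reduces to the same identity.

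This cannot be repaired by more careful bookkeeping. The single relation $(\mathscr{X}^i\mathscr{X}^k+\mathscr{X}^k\mathscr{X}^i)h=0$ inside one Heisenberg triple does not force $(\mathscr{X}^i)^2h_{t^\nu}=0$. For example, with $X=\partial_a+2b\partial_s$ and $Y=\partial_b-2a\partial_s$, the function $a^2s+\tfrac43a^3b$ satisfies $(XY+YX)h=0$ but $X^2h_s=2$. You have also misread the paper's $i=0$, $\nu=3$ computation. It never uses $\mathscr{X}^3$, the partner of $\mathscr{X}^0$ for $\partial_{t^3}$. The coefficients $8$ and $-4$ there multiply $h_{t^1t^2}$, not $(\mathscr{X}^0)^2h_{t^3}$.

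\textbf{The missing ingredient is a third horizontal direction.} Choose $a,b\neq i$ with $[\mathscr{X}^a,\mathscr{X}^b]=\pm4\partial_{t^\nu}$. Such a pair always exists: by Lemma \ref{lemma2.1} each $\partial_{t^\nu}$ arises from four disjoint pairs, and only one of them contains $i$. Write
\begin{align*}
[\mathscr{X}^i,\mathscr{X}^a]=-4\epsilon_a\partial_{t^\alpha},\qquad [\mathscr{X}^i,\mathscr{X}^b]=-4\epsilon_b\partial_{t^\beta},\qquad [\mathscr{X}^a,\mathscr{X}^b]=-4\epsilon\partial_{t^\nu}.
\end{align*}
Compute $\mathscr{X}^i\partial_{t^\beta}\mathscr{X}^ah$ in two ways.
\begin{itemize}
\item Directly, via Lemma \ref{lemma 4.2}, it equals $-2\epsilon_a h_{t^\alpha t^\beta}$.
\item Alternatively, replace $\partial_{t^\beta}$ by $-\tfrac{\epsilon_b}{4}[\mathscr{X}^i,\mathscr{X}^b]$ and expand. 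The inner mixed pair $\mathscr{X}^b\mathscr{X}^ah=2\epsilon h_{t^\nu}$ produces $(\mathscr{X}^i)^2h_{t^\nu}$, and $\mathscr{X}^i\mathscr{X}^b\mathscr{X}^i\mathscr{X}^ah$ returns $h_{t^\alpha t^\beta}$.
\end{itemize}
Equating the two gives $6h_{t^\alpha t^\beta}=\epsilon\epsilon_a\epsilon_b(\mathscr{X}^i)^2h_{t^\nu}$. Swapping $a$ and $b$ changes only the sign of $\epsilon$. Subtracting the two identities gives $(\mathscr{X}^i)^2h_{t^\nu}=0$, and adding them gives $h_{t^\alpha t^\beta}=0$, uniformly in $(i,\nu)$.

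This is where your ``two orderings give opposite signs'' intuition genuinely applies. For $i=0$, $\nu=3$, $a=1$, $b=2$ these are the paper's two identities $12h_{t^1t^2}=\pm2(\mathscr{X}^0)^2h_{t^3}$. The argument needs neither (\ref{4.5}) nor the constancy of $\kappa$. That matters, because leaning on constancy as you planned would be circular: the paper derives that constancy from (\ref{4.8}).
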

From above, we have all second derivatives of $s^\nu(x)$ vanish. Thus, $s^\nu(x)$ must be linear function.
Now we compute $s^\nu(x)$, we begin with
\begin{equation}
    \begin{aligned}
        \mathscr{X}^0 h_{t^1}=&\frac{1}{2}\mathscr{X}^0\mathscr{X}^3\mathscr{X}^2 h=-2\mathscr{X}^2 h_{t^3}-\mathscr{X}^3 h_{t^2}\\ 
=&-\frac{1}{2}\mathscr{X}^0\mathscr{X}^2\mathscr{X}^3 h=2\mathscr{X}^3 h_{t^2}+\mathscr{X}^2 h_{t^3}\\
=&\frac{1}{2}\mathscr{X}^0\mathscr{X}^5\mathscr{X}^4 h=-2\mathscr{X}^4 h_{t^5}-\mathscr{X}^5 h_{t^4}\\
=&-\frac{1}{2}\mathscr{X}^0\mathscr{X}^4\mathscr{X}^5 h=2\mathscr{X}^5 h_{t^4}+\mathscr{X}^4 h_{t^5}\\
=&\frac{1}{2}\mathscr{X}^0\mathscr{X}^6\mathscr{X}^7 h=-2\mathscr{X}^7 h_{t^6}-\mathscr{X}^6 h_{t^7}\\
=&-\frac{1}{2}\mathscr{X}^0\mathscr{X}^7\mathscr{X}^6 h=2\mathscr{X}^6 h_{t^7}+\mathscr{X}^7 h_{t^7}.\\  
    \end{aligned}\end{equation}
   That is 
   \small{\begin{align}
        \mathscr{X}^0 s^1=-\mathscr{X}^2 s^3=\mathscr{X}^3 s^2=-\mathscr{X}^4 s^5=\mathscr{X}^5 s^4=-\mathscr{X}^7s^6=\mathscr{X}^6 s^7.
    \end{align}}
    In the same way, we have
   \small{ \begin{align*}
         &\mathscr{X}^1 s^1=\mathscr{X}^2 s^2=\mathscr{X}^3 s^3=\mathscr{X}^4s^4=\mathscr{X}^5s^5=\mathscr{X}^6 s^6=\mathscr{X}^7 s^7,\\& \mathscr{X}^2 s^1=\mathscr{X}^0s^3=-\mathscr{X}^1 s^2=-\mathscr{X}^4 s^7=\mathscr{X}^5 s^6=\mathscr{X}^7 s^4=-\mathscr{X}^6 s^5,\\&
          \mathscr{X}^3 s^1=-\mathscr{X}^0 s^2=-\mathscr{X}^1 s^3=\mathscr{X}^4 s^6=\mathscr{X}^5 s^7=-\mathscr{X}^7s^5=-\mathscr{X}^6s^4, \\&          
          \mathscr{X}^4 s^1=\mathscr{X}^0 s^5=-\mathscr{X}^1 s^4=   \mathscr{X}^2 s^7=-\mathscr{X}^3 s^6=-\mathscr{X}^7 s^2=\mathscr{X}^6s^3,\\& \mathscr{X}^5 s^1=-\mathscr{X}^0 s^4=-\mathscr{X}^1 s^5=-\mathscr{X}^2 s^6=-\mathscr{X}^3 s^7=\mathscr{X}^7 s^3=\mathscr{X}^6 s^2, \\& \mathscr{X}^6s^1=-\mathscr{X}^0 s^7=-\mathscr{X}^1 s^6=\mathscr{X}^2 s^5=\mathscr{X}^3 s^4=-\mathscr{X}^4 s^3=-\mathscr{X}^5 s^2 , \\& \mathscr{X}^7 s^1=\mathscr{X}^0s^6=-\mathscr{X}^1 s^7=-\mathscr{X}^2 s^4=\mathscr{X}^3 s^5=\mathscr{X}^4 s^2=-\mathscr{X}^5 s^3.\end{align*}}
Then we conclude that there are $y\in \mathbb{R}^{8}$ and $c\in \mathbb{R}^7$,
\small{
\begin{align*}
    s^1(x)=-x^1 y^0+x^0 y^1-x^3y^2+x^2 y^3-x^5 y^4+x^4 y^5+x^7 y^6 -x^6 y^7+c_1,
\end{align*}
\begin{align*}
    s^2(x)=-x^2 y^0+x^3 y^1+x^0y^2-x^1 y^3-x^6 y^4-x^7 y^5+x^4 y^6 +x^5 y^7+c_2,
\end{align*}
\begin{align*}
    s^3(x)=-x^3y^0-x^2 y^1+x^1y^2+x^0 y^3-x^7y^4+x^6 y^5-x^5y^6+x^4 y^7+c_3,
\end{align*}
\begin{align*}
    s^4(x)=-x^4 y^0+x^5 y^1+x^6y^2+x^7 y^3+x^0y^4-x^1y^5-x^2 y^6-x^3 y^7+c_4,
\end{align*}
\begin{align*}
    s^5(x)=-x^5 y^0-x^4 y^1+x^7y^2-x^6 y^3+x^1 y^4+x^0 y^5+x^3 y^6-x^2 y^7+c_5,
\end{align*}
\begin{align*}
    s^6(x)=-x^6y^0-x^7 y^1-x^4y^2+x^5 y^3+x^2 y^4-x^3y^5+x^0y^6+x^1y^7+c_6,
\end{align*}
\begin{align*}
    s^7(x)=-x^7 y^0+x^6 y^1-x^5y^2-x^4 y^3+x^5y^4+x^2 y^5-x^1 y^6 +x^0 y^7+c_7.
\end{align*}}

The sub-Laplace operator enjoys translation invariance on the 15 dimensional octonionic Heisenberg group. After a suitable translation, we have
\begin{align}
    h(x,t)=g(x)+\kappa H=g(x)+\kappa \Sigma_{\nu=1}^7 (t^\nu)^2.
\end{align}

Notice that all fifth-order horizontal derivatives of $H$ vanish. In particular, the fifth-order derivatives of $h$ and $g(x)$ coincide. To compute $g(x)$, we need the following lemma.

\begin{lemma}\label{lemma 4.5}
    For $i=0,1,...,7$, $\mathcal{D}(\mathscr{X}^i,\mathscr{X}^i)=\mathcal{E}(\mathscr{X}^i,\mathscr{X}^i)=0$, implies
    \begin{align}\label{4.14}
        8\mathscr{X}^i\mathscr{X}^ih-6h^{-1}\bigg[\mathscr{X}^ih \mathscr{X}^ih+\Sigma_{\nu=1}^7(I_\nu \mathscr{X}^i)(I_\nu \mathscr{X}^i) h\bigg]=32 
    \end{align}

\end{lemma}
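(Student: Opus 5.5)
The plan is to combine the two scalar identities $\mathcal{D}(\mathscr{X}^i,\mathscr{X}^i)=0$ and $\mathcal{E}(\mathscr{X}^i,\mathscr{X}^i)=0$ (the latter holds identically by (\ref{3.1}), the former comes from (\ref{4.3})) so as to eliminate the mixed second-order term $\Sigma_{\nu=1}^7(I_\nu\mathscr{X}^i)(I_\nu\mathscr{X}^i)h$. In (\ref{4.14}) I read the bracket as the sum of squares of first derivatives, $(\mathscr{X}^ih)^2+\Sigma_\nu\big((I_\nu\mathscr{X}^i)h\big)^2$; this is the first-order bracket that occurs in $\mathcal{E}$.

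First, I set $j=i$ in both definitions. Since each $\omega_\nu$ is a $2$-form, $\omega_\nu(\mathscr{X}^i,\mathscr{X}^i)=0$, so all $h_{t^\nu}$ terms drop out. From $\mathcal{D}(\mathscr{X}^i,\mathscr{X}^i)=0$ I get directly
\begin{align*}
\Sigma_{\nu=1}^7(I_\nu\mathscr{X}^i)(I_\nu\mathscr{X}^i)h=7\,\mathscr{X}^i\mathscr{X}^ih .
\end{align*}

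Second, I simplify the first-order bracket. By the table defining $I_\nu$, for fixed $i$ the eight fields $\mathscr{X}^i, I_1\mathscr{X}^i,\dots,I_7\mathscr{X}^i$ are, up to sign, a permutation of $\mathscr{X}^0,\dots,\mathscr{X}^7$. Each row and column of the table contains every index exactly once, since octonionic left multiplication by a unit permutes the basis up to sign. Hence
\begin{align*}
(\mathscr{X}^ih)^2+\Sigma_{\nu=1}^7\big((I_\nu\mathscr{X}^i)h\big)^2=|\nabla_G h|^2 .
\end{align*}

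Third, I substitute both facts into $\mathcal{E}(\mathscr{X}^i,\mathscr{X}^i)=0$. This gives
\begin{align*}
\frac{h^{-1}}{8}\cdot 8\,\mathscr{X}^i\mathscr{X}^ih-\frac{h^{-2}}{2}|\nabla_G h|^2-4h^{-1}-\frac{h^{-2}}{4}|\nabla_G h|^2=0,
\end{align*}
that is, $\mathscr{X}^i\mathscr{X}^ih-\frac34h^{-1}|\nabla_Gh|^2=4$. Multiplying by $8$ and rewriting $|\nabla_G h|^2$ as the bracket from the second step yields (\ref{4.14}).

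The only step needing care is the permutation claim in the second step, given the non-associativity. It is, however, a statement about a single multiplication $I_\nu\mathscr{X}^i$, not about iterated products, so it can be checked row by row from the table, and no associativity is needed.
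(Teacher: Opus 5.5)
Your proposal is correct and is essentially the paper's argument: the paper simply adds $\mathcal{D}(\mathscr{X}^i,\mathscr{X}^i)+\mathcal{E}(\mathscr{X}^i,\mathscr{X}^i)=0$, which eliminates $\Sigma_{\nu}(I_\nu\mathscr{X}^i)(I_\nu\mathscr{X}^i)h$ in the same way your substitution does. Your reading of the bracket as $(\mathscr{X}^ih)^2+\Sigma_\nu((I_\nu\mathscr{X}^i)h)^2$ and your check that $\mathscr{X}^i, I_1\mathscr{X}^i,\dots,I_7\mathscr{X}^i$ is a signed permutation make explicit what the paper uses tacitly: this bracket equals $|\nabla_G h|^2$, so $-4h^{-1}[\cdots]-2h^{-1}|\nabla_G h|^2$ becomes $-6h^{-1}[\cdots]$, as the paper's next display $8(\mathscr{X}^i)^2h-6h^{-1}\Sigma_{j}(\mathscr{X}^jh)^2=32$ confirms.
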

\begin{proof}
    $\mathcal{D}(\mathscr{X}^i,\mathscr{X}^i)+\mathcal{E}(\mathscr{X}^i,\mathscr{X}^i)$ gives $(\ref{4.14})$.
         
\end{proof}

From (\ref{4.14}), we have
\begin{align}
    8(\mathscr{X}^i)^2h-6h^{-1}\Sigma_{j=0}^7(\mathscr{X}^j h)^2     =32.
\end{align}
Then we have
\begin{align}
    (\mathscr{X}^0)^2 h=(\mathscr{X}^1)^2 h=...=(\mathscr{X}^7)^2 h.
\end{align}
Recall that $\mathscr{X}^1\mathscr{X}^0 h=2h_{t^1}=4\kappa t^1$, we have
\begin{align}
   ( \mathscr{X}^0)^3h=\mathscr{X}^0 (\mathscr{X}^1)^2 h=\mathscr{X}^1\mathscr{X}^0\mathscr{X}^1 h+[\mathscr{X}^0,\mathscr{X}^1]\mathscr{X}^1 h=-6\mathscr{X}^1 h_{t^1}=24\kappa x^0.
\end{align}
Thus, $(\mathscr{X}^0)^4h=24\kappa$. In the same way, for $i=0,1,...,7$, we have
\begin{align}
    (\mathscr{X}^i)^4 h=24\kappa,\, (\mathscr{X}^i)^3h=24\kappa x^i,
\end{align}
After substituting $H$, we can compute 
\begin{align}
    (\mathscr{X}^i)^3g(x)=24\kappa x^i,\,(\mathscr{X}^i)^4g(x)=24\kappa.
\end{align}
\begin{align}
    (\mathscr{X}^i)^2g(x)=(\mathscr{X}^j)^2g(x)\,\,\textrm{for}\, i\neq j.
\end{align}
Then $g(x)$ is a polynomial of degree $4$, and of the form
\begin{align}\label{4.20}
    g(x)=\kappa \bigg[\Sigma_{i=0}^7(x^i)^4+2\Sigma_{i,j=0,i\neq j}^7 (x^i x^j)^2\bigg]+\varepsilon\Sigma_{i=0}^7(x^i)^2+p_1(x),
\end{align}
where $p_1$ is a polynomial of degree $1$ and $\varepsilon$ is a constant. To prove $p_1(x)$ is a constant, first, we use (\ref{3.13}) and (\ref{3.14}) together with $D(\mathscr{X}^i)=B_1(\mathscr{X}^i)=0$ to get for $i=0,1,...,7$,
\begin{align*}
    2\mathscr{X}^if=-4 I_{1}\mathscr{X}^ih_{t^1}=16\kappa x^i,
    \end{align*}
then from (\ref{ft}), we have
\begin{align*}
    f=4\kappa|x|^2+c,
\end{align*}
for some constant $c\geq 4$ since $f=4+\frac{|\nabla_G h|^2}{4h}\geq 4$. Besides,
\begin{align*}
    0=h^2B_1(\mathscr{X}^0)&=2h\mathscr{X}^1h_{t^1}-\Sigma_{\mu=1}^7h_{t^\mu}\mathscr{X}^\mu h+\frac{1}{2} f\mathscr{X}^0 h\\&=-8\kappa[g(x)+\kappa\Sigma_{\nu=1}^7 (t^\nu)^2]x^0-2\kappa \Sigma_{\mu=1}^7t^\mu \mathscr{X}^\mu h+\frac{1}{2}(c+4\kappa|x|^2)\mathscr{X}^0 h.
\end{align*}
Comparing the coefficient of constant, we have $\mathscr{X}^0p_1(x)=0$. The coefficients of $t^\nu$ give $\mathscr{X}^\nu p_1(x)=0$. Then $p_1(x)$ is constant and $g(x)$ is symmetric with respect to $|x|$. Plugging (\ref{4.20}) into (\ref{3.1}), we get our conclusion. Note that the final conclusion can also be reached by \cite{GV2001} since we have proved that $h$ is cylindrically symmetric. \\

\end{document}